%%%%%%%%%%%%%%%%%%%%%%%%%%%%%%%%%%%%%%%%%%%%%%%%%%%%%%%%%%%%%%%%%%%%%%%%%%%%%%%%
\documentclass[11pt]{amsart}
%~
\usepackage{amsmath}
\usepackage{amssymb}
\usepackage{graphicx}
\usepackage{bm}
\usepackage{hyperref}
%\usepackage{showkeys}
%%%%%%%%%%%%%%%%%%%%%%%%%%%%%%%%%%%%%%%%%%%%%%%%%%%%%%%%%%%%%%%%%%%%%%%%%%%%%%%%
\newtheorem{theorem}{Theorem}[section]
\newtheorem{corollary}[theorem]{Corollary}
\newtheorem{lemma}[theorem]{Lemma}
\newtheorem{proposition}[theorem]{Proposition}
\theoremstyle{definition}
\newtheorem{definition}[theorem]{Definition}

\newtheorem{remark}[theorem]{Remark}
\newtheorem{claim}{Claim}[subsection]

\numberwithin{equation}{section}
%%%%%%%%%%%%%%%%%%%%%%%%%%%%%%%%%%%%%%%%%%%%%%%%%%%%%%%%%%%%%%%%%%%%%%%%%%%%%%%%
\newtheorem*{acknowledgement*}{Acknowledgments}

\newenvironment{DIof}[1]{\noindent{\sc Proof #1.\,}}{\vspace{.05cm} \nobreak \hfill $\square$ \\}

%%%%%%%%%%%%%%%%%%%%%%%%%%%%%%%%%%%%%%%%%%%%%%%%%%%%%%%%%%%%%%%%%%%%%%%%%%%%%%

\newcommand{\cw}{\stackrel{\mathcal{G}}{\rightharpoonup}}
\newcommand{\dx}[1]{\mathrm{d}{#1}}
\newcommand{\eqdef}{\stackrel{\mathrm{def}}{=}}

   \def\R{\mathbb{R}}
   
   \def\N{\mathbb{N}}
   \def\Z{\mathbb{Z}}

   \def\supp{\mathop{\rm supp}\nolimits}

   \def\loc{\mathop{\rm loc}\nolimits}

   \def\wlim{\mathop{w-\rm{lim}}}

   \newcommand{\beq}{\begin{equation}}
   \newcommand{\eeq}{\end{equation}}
   \newcommand{\ba}{\begin{array}}
   \newcommand{\ea}{\end{array}}
   \def\a {{\alpha}}

\def\i {\mathbf{i}}

\newcommand{\pairing}[2]{\langle #1,#2 \rangle}

\title[Magnetic Schr\"odinger]{Nonlinear Schr\"odinger equation with bounded magnetic field}
%\title{Notion of weak convergence in metric spaces}

%    Only \author and \address are required; other information is
%    optional.  Remove any unused author tags.

\author[G. Devillanova]{Giuseppe Devillanova}
\address{Politecnico di Bari, via Amendola, 126/B, 70126 Bari, Italy}
\email{giuseppe.devillanova@poliba.it}
\author[C. Tintarev]{Cyril Tintarev}
\address{Technion - Israel Institute of Technology, Haifa 3200, Israel }
\email{tammouz@gmail.com}

\thanks{One of the authors (G.D.) is supported by GNAMPA of the Istituto Nazionale di Alta Matematica (INdAM)
and by MIUR-FFABR-2017 research grant.
Another author (C.T.) thanks the mathematics department of the Politecnico of Bari for their warm hospitality. He completed this work during his stay at Technion as Lady Davis Visiting Professor. He also acknowledges access to library resources of Uppsala University.}
\keywords{Schr\"odinger operator, magnetic field, ground state, concentration compactness,  profile decomposition, critical points}
\subjclass[2010]{35Q40, 35Q60, 35J20, 35J61, 46B50.}
\date{\today}
\begin{document}

\begin{abstract}
The paper studies existence of solutions for the nonlinear Schrödinger equation
\begin{equation}\label{eq:magschr}
-(\nabla+\i A(x))^2u+V(x)u=f(|u|)u	
\end{equation}
with a general bounded external magnetic field. In particular, no lattice periodicity of the magnetic field or presence of external electric field is required.
Solutions are obtained by means of a general structural statement about bounded sequences in the magnetic Sobolev space. 
\end{abstract}
\maketitle

%%%%%%%%%%%%%%%%%%%%%%%%%%%%%%%%%%%%%%%%%%%%%%%%%%%%%%%%%%%%%%%%%%%%%%%%%%%%%%%%

\section{\label{intro}Introduction}
The present paper studies existence of solutions for the nonlinear Schrödinger equation with bounded external magnetic field $B$ on $\R^N$ without additional assumptions, such as lattice periodicity. External magnetic field enters Schr\"odinger equation, as well as other equations of quantum mechanics, by addition of a real-valued covector field $A$, called the magnetic potential to the momentum operator $\frac{\hbar}{\i}d$. Magnetic field, which is a measurable quantity, is a differential 2-form $B=d A$, while the magnetic potential is defined up to an arbitrary additive term $d\varphi$, where $\varphi:\R^N\to \R$ is an  arbitrary scalar function. Magnetic momentum 
$\frac{\hbar}{\i} d+A$ has the following gauge invariance property:
\begin{equation*}\label{eq:shifts0}
	(\frac{\hbar}{\i} d+A)(e^{\i\frac{\varphi}{\hbar}}u)=
	e^{\i\frac{\varphi}{\hbar}}
	(\frac{\hbar}{\i}d+A+d\varphi)u.
\end{equation*}
In this paper we follow the convention that normalizes the mass of the particle and sets the Planck constant $\hbar$ to be equal to $1$ (which is always possible by rescaling the time and space variables).

Up to our knowledge, the earliest existence result for nonlinear magnetic Schr\"odinger equation 
is the paper by Esteban and Lions \cite{LionsMag} where the magnetic field is assumed to be constant. Their approach was generalized to periodic magnetic field by Arioli and Szulkin \cite{ArioliSzulkin} (see also \cite{SchinTinMag}, \cite{CIS}, \cite{BegoutSchindler}). In addition to that, there is a number of important existence results for quasiclassical solutions, that is, solutions of the nonlinear magnetic Schr\"odinger equation that exist if $\hbar$ is sufficiently small. The earliest study of quasiclassical solutions known to us is due to Kurata  \cite{Kurata}, where existence was obtained under assumptions that involved both electric and magnetic potential, and it was followed by a number of other results concerning quasiclassical solutions by Cingolani, Jeanjean, Secchi, Tanaka and several others, see \cite{CJS, Cingolani16, Cingolani17} and references therein. In most of these works existence of solutions is connected to concentration at critical points of the electric potential as the Planck constant tends to zero. Several other papers studied critical nonlinearities in the scalar field term, \cite{BCS, CS}, and the Aharonov-Bohm field, related to critical nonlinear growth, in \cite{ClappSzulkin}. 
The main technical difficulty in proving existence results for the magnetic Schr\"odinger equation is the lack of compactness of Sobolev embeddings in the whole $\R^N$, and it is overcome by the use of a concentration-compactness argument. For instance, in \cite{ArioliSzulkin}, one controls the loss of compactness in problems with a periodic magnetic field by means of energy-preserving operators
\begin{equation}
\label{eq:m-shifts}
g_y := \; u\mapsto e^{\i\varphi_y(\cdot)}u(\cdot-y), \quad y\in\Z^N,
\end{equation}
(where $\varphi_y$ is a suitable re-phasing function (see \eqref{eq:phiy} below)),
known as \emph{magnetic shifts} with their inverse
\begin{equation*}
\label{eq:m-invshifts}
g^{-1}_y := \; v\mapsto e^{-\i\varphi_y(\cdot+y)}v(\cdot+y), \quad y\in\Z^N\,.
\end{equation*}
Another concentration mechanism applies to quasiclassical asymptotics and is not considered here. The present paper aims to extend the approach of \cite{LionsMag} and \cite{ArioliSzulkin} to problems with a generic bounded magnetic field, without any periodicity assumption. This is achieved by a refined concentration-compactness analysis (see Theorem \ref{thm:MagneticPD} and Theorem \ref{thm:energies}) that uses a suitable non-isometric counterpart of magnetic shifts (see Definition \ref{def:g}). 
This analysis allows to prove a variety of existence results which are beyond the scope of the present paper. Our objective here is to describe the concentration mechanism in the non-periodic case and give a sample existence result which, in particular, includes existence of solutions in absence of an electric field.

The paper is organized as follows. Section~2 is dedicated to notation, as well as to conventions for magnetic shifts in the lattice-periodic case. In Section~3 we provide the generalization of magnetic shifts for the non-periodic case, as well as the notions of the magnetic field and of the corresponding energy functional at infinity. In Section~4 we prove a ``vanishing lemma'', that asserts, in terms of generalized magnetic shifts, a property of the Sobolev embedding similar to cocompactness.
In Section~5 we employ generalized magnetic shifts to express the defect of compactness (i.e.\ the difference between the weak limit (modulo subsequence) of a bounded sequence $(u_k)_{k\in\N}$ and a particular profile decomposition (i.e.\ a sum of terms with asymptotically disjoint supports, each of them having the form of suitable magnetic shifts (determined by mutually diverging sequences of points) applied to {\em{a fixed function}}). In Section~6 we apply the profile decomposition obtained in Section~5 to show the existence of nontrivial solutions to \eqref{eq:magschr} under assumption that the magnetic field is bounded and vanishes at infinity. Finally, in the Appendix, we give some details about the set of magnetic shifts in the lattice periodic case and show that it generates a group modulo re-phasings.

Main results of this paper are Theorem~\ref{thm:MagneticPD} and Theorem~\ref{thm:energies} concerning profile decompositions in magnetic Sobolev space and Theorems~\ref{thm:minimaxT} and \ref{thm:minimaxV} providing some representative existence results. It should be noted, that while most existence results in literature pose conditions on the magnetic potential which is not a measurable quantity and is not uniquely defined by the magnetic field, this paper provides existence of solution at a specified energy level using  simple conditions on the magnetic field itself: a uniform bound and the vanishing at infinity.

\section{\label{notation}Notation and basic properties}
In this paper the set $\N$ of natural numbers is fixed as starting with zero.
Given a magnetic potential $A$ as a real-valued linear form, we define 
\begin{equation}\label{eq:nablaA}
\nabla_A (u)\eqdef  (\nabla+ \i A)u,
\end{equation}
and the scalar product
\begin{equation*}\label{eq:scalar}
\pairing{u}{v}_A\eqdef\int_{\R^N} \nabla_A u(x) \overline{\nabla_A v(x)}\dx{x}\,,
\end{equation*}
(the symbol of the scalar product on $\R^N$ will be neglected throughout the paper). 
Then, we introduce the (Dirichlet-type) energy functional
\begin{equation}\label{eq:hamiltonian}
E_A(u)\eqdef \int_{\R^N}|\nabla_A u(x)|^2\dx x.
%=\int_{\R^N}|\nabla u(x)+ \i A(x) u(x)|^2\dx x.
\end{equation}

%where the map $A:\R^N\to \R^N$ represents a linear form which is called a magnetic potential and
Note that the following relation holds true.
\begin{equation}\label{eq:norm2}
\begin{split}
|\nabla_A u|^2 & =|\nabla u|^2+\i A(u\nabla\bar{u}-\bar{u}\nabla u)+|A|^2|u|^2\\
& \geq |\nabla u|^2-4|A||u||\nabla u|+|A|^2|u|^2\,.
\end{split}
\end{equation}
%(OF COURSE YOU CAN MOVE THIS CALCULATIONS IN THE CHECKS SECTION)
Indeed, by applying Cauchy-Schwarz and triangle inequality, the real number
\begin{equation}\label{products}
\begin{split}
\i A (u\nabla\bar{u}-\bar{u}\nabla u)  & =
2 \i^2 A(\Re(u)\nabla\Im(u)-\Im(u)\nabla\Re(u))\\
& \geq -2 |A|(|\Re(u)|\,|\nabla\Im(u)|+|\Im(u)|\,|\nabla\Re(u))|)\\
& \geq -4 |A|\,|u|\,|\nabla u|\,.\\
\end{split}
\end{equation}

Moreover, by taking into account the trivial relation
$(|\nabla u|-4 |A|\,|u|)^2\geq 0$, we deduce
\begin{equation}\label{4products}
4 |A|\,|u|\,|\nabla u|\leq \frac{1}{2}|\nabla u|^2+8|A|^2 \, |u|^2\,.\\
\end{equation}
Then, by combining \eqref{4products} and \eqref{eq:norm2}, we get
\begin{equation}\label{eq:H1ABound}
|\nabla_A u|^2\geq \frac{1}{2}|\nabla u|^2-7|A|^2 \, |u|^2\,,\\
\end{equation}
or, conversely,
\begin{equation}\label{eq:H1Bound}
|\nabla u|^2\leq 2|\nabla_A u|^2+14|A|^2 \, |u|^2\,.\\
\end{equation}

As far as no ambiguity arises, we will not distinguish, in notation, between the magnetic field and the skew-symmetric $N\times N$ matrix-valued function that represents it in the Euclidean space, and, respectively, between the magnetic potential and its representation as a vector field. In this context we also allow differentiation of scalar functions to be denoted, interchangeably as $\nabla$ and as $d$, depending if we view the resulting value as a vector field or as a 1-form.

When dealing with a magnetic potential $A$ we shall define $\dot H_A^{1,2}(\R^N)$ as the completion of $C_0^\infty$ with respect to the norm $\|u\|_A=(E_A(u))^\frac12$
(note that $\dot H^{1,2}_A(\R^N)$ is a space of measurable functions whenever $N>2$ or $N=2$ and $dA\not\equiv 0$, see e.g.\ \cite{Enstedt}), and the space 
$H_A^{1,2}(\R^N)$ as the intersection $\dot H_A^{1,2}(\R^N)\cap L^2(\R^N)$ (equipped with the standard intersection norm). Note that $|u|^2$ in this model has the meaning of the probability distribution of a particle in the space, and it doesn't change if $u$ is also affected by the multiplication with $e^{\i\varphi}$ (on the other side, as already remarked, the magnetic potential $A$ and $A+\nabla\varphi$ give rise to the same magnetic field).
Note also that, when $A\equiv 0$, we obtain the corresponding usual Sobolev spaces $\dot H^{1,2}(\R^N)$ and $H^{1,2}(\R^N)$ respectively.
%, so we shall omit the subscript $A$ in the notation of those spaces.
Moreover, in such a case, the re-phasing function $\varphi_y$ in \eqref{eq:m-shifts} can be set to be constant for all $y$.
We shall later normalize the magnetic shifts $g_y$ (see \eqref{eq:givenpoint} and \eqref{zero} below) in such a way that they reduce to Euclidean shifts when the magnetic field is zero.
%
%
% The notation for corresponding usual Sobolev spaces with $A=0$ will have the subscript $A$ omitted.

Setting, as usual, $2^*=\begin{cases}
\frac{2N}{N-2}, &N>2\\
\infty, &N=2 \end{cases}$
the critical Sobolev exponent, we have the following continuous embeddings
$H^{1,2}(\R^N)\hookrightarrow L^p(\R^N)$ for every $p\in[2,2^*)$,
 and $H^{1,2}(\R^N) \hookrightarrow \dot H^{1,2}(\R^N)\hookrightarrow  L^{2^*}(\R^N)$ when $N>2$. Furthermore, from the well-known {\em{diamagnetic inequality}} 
\begin{equation}\label{eq:diamag}
|\nabla_A u(x)|\ge \left|\nabla|u(x)|\right|,
\end{equation}
we deduce that
\begin{equation*}
\label{eq:ineq}
E_A(u)\ge E_0(|u|)\ge C\|u\|_{2^*}^2, \quad \mbox{ if } N>2,
\end{equation*}
and, as a consequence, the continuous embeddings
\begin{equation*}
\label{eq:emb}
\dot H_A^{1,2}(\R^N)\hookrightarrow L^{2^*}(\R^N)\quad \mbox{ and }\quad H_A^{1,2}(\R^N)\hookrightarrow  L^{p}(\R^N) \;\forall p\in[2,2^*),
\end{equation*}
(the latter one extends by analogous argument to the case $N=2$).

Note that, when the magnetic field $B$ is lattice periodic, i.e.\ when
\begin{equation}\label{eq:lattice}
B(\cdot-y)=B(\cdot)\qquad \mbox{ for all } y\in\Z^N,
\end{equation}
we have, in terms of a fixed magnetic potential $A$,
$$d(A(\cdot-y)-A(\cdot))=0 \qquad \mbox{ for all } y\in \Z^N.$$
Therefore, 
\begin{equation}
\label{eq:phiy}
\forall y\in\Z^N\; \exists \varphi_y \mbox{ s.t. } A(\cdot -y)=A(\cdot)+\nabla\varphi_y(\cdot).
\end{equation}
From this we can derive the following relation which will be used in the Appendix. Namely for any $y_1,\,y_2 \in\Z^N$ there exists a constant $\gamma(y_1, y_2)\in\R$ such that
\begin{equation}\label{eq:etaprod}
\varphi_{y_1+y_2}=\varphi_{ y_1}(\cdot-y_2)+\varphi_{y_2 }+\gamma(y_1, y_2),
\end{equation}
(indeed, derivatives of the left and of the right hand side coincide by \eqref{eq:phiy}). 
Thus, for every $y\in\Z^N$, we can define, by \eqref{eq:m-shifts}, a suitable magnetic shift $g_y$ such that, (still denoting by $g_y$ its extension to vector valued functions) we have the following commutation law
\begin{equation*}
%\label{eq:latticecommutation}
\nabla_A(g_y u)=g_y(\nabla_A u),
\end{equation*}
where, actually, for all $x\in \R^N$,
\begin{equation*}
\label{eq:gnabla}
g_y(\nabla_A u)(x) \eqdef e^{\i\varphi_y(x)}\nabla_A u (x-y)=
e^{\i\varphi_y(x)}(\nabla u(x-y)+ \i A(x-y)u(x-y)).
\end{equation*}
Indeed, by taking into account (\eqref{eq:m-shifts}, \eqref{eq:nablaA} and) \eqref{eq:phiy}, we have
\begin{equation}\label{eq:proofcommutation}
\begin{split}
\nabla_A(g_y u)(x) & =e^{\i\varphi_y(x)}\left( \nabla u (x-y)+\i (\nabla \varphi_y(x) + A(x)) u(x-y)\right)\\
& = e^{\i\varphi_y(x)}\left( \nabla u (x-y)+\i A(x-y) u(x-y)\right)\\
&= g_y(\nabla_A u)(x).
\end{split}
\end{equation}
As a consequence, we have that the magnetic shifts $g_y$ (given by \eqref{eq:m-shifts} where 
$\varphi_y$ satisfies \eqref{eq:phiy}) are, for all $y\in \Z^N$, isometries on
$\dot H^{1,2}_A(\R^N)$, indeed, for all $u, v \in \dot H^{1,2}_A(\R^N)$, we have
\begin{equation}\label{eq:latticescalar}
\pairing{g_y u}{g_y v}_A=\pairing{u}{v}_A.
\end{equation}
%and, in particular, that
%\begin{equation*}
%E_A(g_y u)=E_A(u).
%\end{equation*}
%
%thanks to the following elementary identity
%\begin{equation}\label{gauge}
%E_A(e^{\i\varphi}u)=E_{A+\nabla\varphi}(u)\quad \mbox{for all scalar function }\varphi,
%\end{equation}
%(and since $|e^{\i \varphi_y}|=1$) we get that
%\begin{equation}
%\label{eq:shifts}
%E_A(g_yu)=E_A(e^{\i\varphi_y}u(\cdot+y))=E_{A}(u)\qquad \mbox{ for all }y\in \Z^N,
%\end{equation}
%i.e.\ the energy functional $E_A$ is invariant with respect to the magnetic shifts $g_y$given by \eqref{eq:m-shifts} where $\varphi_y$ is given by \eqref{eq:phiy}.
%
Indeed, by taking into account \eqref{eq:proofcommutation} and since $\varphi_y$ is a real valued function, we get
%\begin{equation}\label{eq:proofgauge}
%\begin{split}
%E_A(g_yu) & = \int_{\R^N}|\nabla (g_y) u(x)+ \i A(x) (g_y u)(x)|^2\dx x\\
%& = \int_{\R^N}|\nabla (e^{\i\varphi_y (x)} u(x+y)) + \i A(x) e^{\i\varphi_y (x)}  u(x+y)|^2\dx x\\
%& = \int_{\R^N}|e^{\i\varphi_y (x)} \left[\nabla u (x+y) + \i\left( \nabla \varphi_y (x) u(x+y) + A(x) u(x+y)\right)\right]|^2\dx x\\
%&= \int_{\R^N} |\nabla u (z) +  \i  A(z) u(z)|^2\dx z=
%\int_{\R^N} |\nabla_A u (x) |^2\dx x= E_A(u).\\
%\end{split}
%\end{equation}
%
%we get that in the case of lattice periodic magnetic field, the magnetic shifts in \eqref{eq:m-shifts} are isometries, i.e.\ 
that
\begin{equation}\label{eq:proofinvariance}
\begin{split}
\pairing{g_y u}{g_y v}_A & = \int_{\R^N}\nabla_A (g_y u) \overline{\nabla_A (g_y v)}\dx x\\
& = \int_{\R^N}g_y(\nabla_A u) \,\overline{g_y(\nabla_A v)}\dx x\\
& = \int_{\R^N} \nabla_A u(x-y) \, \overline{\nabla_A v(x-y)}\dx x= \pairing{u}{v}_A.\\
\end{split}
\end{equation}
As a consequence of \eqref{eq:latticescalar} we have
\begin{equation}
\label{eq:shifts}
E_A(g_y u)=E_A(e^{\i\varphi_y(\cdot)}u(\cdot-y))=E_{A}(u)\qquad \mbox{ for all }y\in \Z^N.
\end{equation}
%(RICORDA, nel caso non lattice, we shall use \eqref{eq:correctedA} instead of \eqref{eq:phiy}, using the fact that
%$\nabla \varphi_y(x)+ A(x)=A_y(x)$).
\vskip2mm
The following lemma can be found elsewhere in literature (e.g. \cite{ArioliSzulkin,Weidl}) but we include our version here for the consistency of the paper.
\begin{lemma} \label{lem:localmag} 
	Let $\Omega\subset\R^N$ be an open bounded set  with piecewise $C^1$-boundary and let $A\in C(\Omega,\Lambda_1)$. Then, for any $\lambda>0$, there exist positive constants $C_1$ and $C_2$ such that for all $u\in H^{1,2}_A(\R^N)$:
\begin{equation}
	\label{eq:localmag}
	C_1\int_\Omega(|\nabla u|^2+|u|^2)\dx{x}\le \int_\Omega(|\nabla_A u|^2+\lambda|u|^2)\dx{x} \le  C_2\int_\Omega(|\nabla u|^2+|u|^2)\dx{x}.
\end{equation}	 
\end{lemma}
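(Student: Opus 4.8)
The plan is to reduce both inequalities to the pointwise estimates already derived in Section~\ref{notation}, using only that $A$, being continuous on the bounded set $\Omega$, is bounded there; so I would set $M\eqdef\sup_{x\in\Omega}|A(x)|<\infty$. Note that the piecewise $C^1$ regularity of $\partial\Omega$ plays no role in this particular two-sided bound: only the boundedness of $\Omega$ is used (to guarantee $M<\infty$). I would also record at the outset that for $u\in H^{1,2}_A(\R^N)$ all the integrals involved are finite, since $\int_\Omega|\nabla_A u|^2\le E_A(u)<\infty$ and $\int_\Omega|u|^2\le\|u\|_2^2<\infty$; in particular the middle quantity in \eqref{eq:localmag} is finite, and $\nabla u=\nabla_A u-\i Au\in L^2(\Omega)$ because $A$ is bounded on $\Omega$, so $u|_\Omega\in H^{1,2}(\Omega)$ and the outer quantities make sense as well.

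For the right-hand (upper) inequality I would start from the identity in \eqref{eq:norm2}, which together with the reverse of the estimate used in \eqref{products} gives pointwise $|\nabla_A u|^2\le|\nabla u|^2+4|A|\,|u|\,|\nabla u|+|A|^2|u|^2$. Applying Young's inequality $4|A|\,|u|\,|\nabla u|\le|\nabla u|^2+4|A|^2|u|^2$ yields $|\nabla_A u|^2\le 2|\nabla u|^2+5|A|^2|u|^2\le 2|\nabla u|^2+5M^2|u|^2$ on $\Omega$. Integrating over $\Omega$ and adding $\lambda\int_\Omega|u|^2$ then gives the upper bound with, e.g., $C_2=\max\{2,\,5M^2+\lambda\}$.

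For the left-hand (lower) inequality I would use \eqref{eq:H1Bound}, i.e.\ $|\nabla u|^2\le 2|\nabla_A u|^2+14|A|^2|u|^2$, which on $\Omega$ reads $|\nabla u|^2+|u|^2\le 2|\nabla_A u|^2+(14M^2+1)|u|^2$ a.e. Integrating over $\Omega$ and estimating
\[
2\int_\Omega|\nabla_A u|^2+(14M^2+1)\int_\Omega|u|^2\le\max\Big\{2,\tfrac{14M^2+1}{\lambda}\Big\}\int_\Omega\big(|\nabla_A u|^2+\lambda|u|^2\big)
\]
produces the lower bound with $C_1=\min\{1/2,\,\lambda/(14M^2+1)\}>0$.

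I do not anticipate a genuine obstacle here: every ingredient is the pointwise arithmetic of Section~\ref{notation} (the identity \eqref{eq:norm2}, the bounds \eqref{eq:H1ABound}--\eqref{eq:H1Bound}, and Young's inequality) combined with $\sup_\Omega|A|<\infty$. The only point that deserves a word of comment is the necessity of the term $\lambda|u|^2$ with $\lambda>0$ on the left-hand side: without it the constant $C_1$ degenerates, since $\int_\Omega|u|^2$ cannot be controlled by $\int_\Omega|\nabla_A u|^2$ alone — already for $A\equiv0$ and $u$ equal to a nonzero constant on a neighbourhood of $\overline\Omega$ — which is why the statement is phrased for an arbitrary but fixed $\lambda>0$.
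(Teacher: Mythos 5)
Your proof is correct, and it takes a genuinely simpler route than the paper's. You derive both sides of \eqref{eq:localmag}, for the \emph{given} $\lambda>0$, directly from the pointwise estimates of Section~\ref{notation} (essentially \eqref{eq:norm2} and \eqref{eq:H1Bound}) plus Young's inequality and the bound $M=\sup_\Omega|A|$; the resulting constants $C_1=\min\{1/2,\lambda/(14M^2+1)\}$ and $C_2=\max\{2,5M^2+\lambda\}$ depend explicitly on $\lambda$ and $M$ and the argument is entirely elementary. The paper instead first establishes the inequality for \emph{some} constant $\Lambda$ from the same pointwise estimates, and then bootstraps to an arbitrary $\lambda>0$ via the compactness of the local Sobolev embedding $H^{1,2}(\Omega)\hookrightarrow L^2(\Omega)$: it passes to a closed subspace $E_\epsilon$ of finite codimension on which $\int_\Omega|u|^2\le\epsilon\int_\Omega(|\nabla u|^2+|u|^2)$, where changing $\Lambda$ to $\lambda$ costs only an $O(\epsilon)$ correction, and then uses equivalence of norms on the finite-dimensional complement together with a triangle-inequality splitting. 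Your approach buys a much shorter and more transparent proof with explicit constants and no appeal to compactness; the paper's decomposition is heavier machinery that here produces nothing extra, since the constants must depend on $\lambda$ in any case (test on a function $u$ that is nonzero and locally constant inside $\Omega$). One small point worth flagging in both arguments: $A\in C(\Omega,\Lambda_1)$ on an open bounded $\Omega$ does not by itself guarantee $\sup_\Omega|A|<\infty$; one really wants $A\in C(\overline\Omega)$, which is what holds in every application in the paper since there $A\in C_{\mathrm{loc}}(\R^N,\Lambda_1)$. Your proof and the paper's both rely on this silently, so this is an imprecision in the statement rather than a gap in your argument.
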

%\begin{proof}
%The right inequality is elementary. Let us prove the left inequality.
%By the diamagnetic inequality, $\int_\Omega |u|^2\dx{x}\le C \int_{\R^N} |\nabla |u||^2\dx{x}\le E_A(u)$. Using the elementary inequality
%\[
%\left|\int_\Omega Au\nabla\overline u\dx{x}\right|\le \epsilon \int_\Omega |\nabla u|^2\dx{x}+C(\epsilon) \int_\Omega |u|^2\dx{x},
%\] 
%we get 
%\[
 %\int_\Omega(\|\nabla_A u\|+|u^2|)\dx{x}\ge \frac12 \int_\Omega(|\nabla u\|^2\dx{x}- C\int_\Omega |u^2|\dx{x}.
%\]	
%\end{proof}	
\begin{proof}
	First note that there exist positive constants $\Lambda$, $C_1$ and $C_2$  such that 
	\begin{equation}
	\label{eq:localmag0}
	C_1\int_\Omega(|\nabla u|^2+|u|^2)\dx{x}\le \int_\Omega(|\nabla_A u|^2+\Lambda|u|^2)\dx{x} \le  C_2\int_\Omega(|\nabla u|^2+|u|^2)\dx{x}
	\end{equation}
for all $u\in H^{1,2}_A(\R^N)$.		 
Indeed, the right inequality is elementary and the left one easily follows by \eqref{eq:H1Bound}.
Let us now recall that by compactness of local Sobolev embeddings, for any $\epsilon>0$ there exists a subspace $E_\epsilon>0$ of finite codimension such that 
\[
\int_\Omega |u|^2\dx{x}\le \epsilon \int_\Omega(|\nabla u|^2+|u|^2)\dx{x}, \;u\in E_\epsilon. 
\]
Then \eqref{eq:localmag} restricted to $E_\epsilon$ is immediate from \eqref{eq:localmag0}. Furthermore, since all norms are equivalent on finite-dimensional spaces, \eqref{eq:localmag} holds also on a complement of $E_\epsilon$, and then, by the triangle inequality, it holds for all  $u\in H^{1,2}_A(\R^N)$.
\end{proof}	

\begin{corollary}
	\label{cor:Frechet}
For any $A\in C_{\mathrm{loc}}(\R^N,\Lambda_1)$ the
Fr\'echet spaces $H^{1,2}_{A,\mathrm{loc}}(\R^N)$ and  $H^{1,2}_{\mathrm{loc}}(\R^N)$ coincide.
\end{corollary}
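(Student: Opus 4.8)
The plan is to reduce the statement to Lemma~\ref{lem:localmag}, applied on an exhausting sequence of balls, combined with the pointwise bounds \eqref{eq:H1Bound} and the elementary inequality $|\nabla_A u|\le|\nabla u|+|A|\,|u|$. Recall that both $H^{1,2}_{\mathrm{loc}}(\R^N)$ and $H^{1,2}_{A,\mathrm{loc}}(\R^N)$ consist, by definition, of the (locally square-integrable) functions $u$ whose restriction to each ball $B_n\eqdef\{x\in\R^N:|x|<n\}$, $n\in\N$, lies in $H^{1,2}(B_n)$, resp.\ in $H^{1,2}_A(B_n)$, the Fréchet topologies being generated by the countable families of seminorms $u\mapsto\|u\|_{H^{1,2}(B_n)}$ and $u\mapsto\big(\int_{B_n}|\nabla_A u|^2+|u|^2\big)^{1/2}$, respectively. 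Hence it is enough to show, for each fixed $n$, that these two seminorms have the same domain of finiteness and are mutually equivalent on it.

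Fix $n$. Since $A\in C_{\mathrm{loc}}(\R^N,\Lambda_1)$, its restriction to $\overline{B_n}$ is continuous, hence bounded; put $M_n\eqdef\sup_{\overline{B_n}}|A|<\infty$. This is precisely the hypothesis needed to apply Lemma~\ref{lem:localmag} with $\Omega=B_n$ (a bounded open set with $C^1$ boundary). For the coincidence of the underlying sets I would argue directly from the pointwise estimates: if $u|_{B_n}\in H^{1,2}(B_n)$, then $\nabla_A u=\nabla u+\i A u\in L^2(B_n)$ since $|A u|\le M_n|u|\in L^2(B_n)$, so $u|_{B_n}\in H^{1,2}_A(B_n)$; conversely, if $u|_{B_n}\in H^{1,2}_A(B_n)$, then \eqref{eq:H1Bound} gives $|\nabla u|^2\le 2|\nabla_A u|^2+14 M_n^2|u|^2\in L^1(B_n)$, whence $\nabla u\in L^2(B_n)$ and $u|_{B_n}\in H^{1,2}(B_n)$. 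Letting $n$ range over $\N$ yields $H^{1,2}_{A,\mathrm{loc}}(\R^N)=H^{1,2}_{\mathrm{loc}}(\R^N)$ as sets.

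It then remains to compare the topologies. For this I would invoke Lemma~\ref{lem:localmag} with $\Omega=B_n$ and $\lambda=1$ — its proof only uses $u$ on $\Omega$, and by the previous step $u|_{B_n}$ lies in $H^{1,2}_A(B_n)$ exactly when it lies in $H^{1,2}(B_n)$ — to obtain constants $C_{1,n},C_{2,n}>0$ such that
\[
C_{1,n}\,\|u\|_{H^{1,2}(B_n)}^2\ \le\ \int_{B_n}\big(|\nabla_A u|^2+|u|^2\big)\dx x\ \le\ C_{2,n}\,\|u\|_{H^{1,2}(B_n)}^2 .
\]
Thus each defining seminorm of one family is bounded, up to a constant, by the corresponding seminorm of the other family; the two countable systems of seminorms are therefore equivalent, they induce the same locally convex topology, and the identity is a homeomorphism between the two spaces, so the Fréchet spaces coincide.

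I do not anticipate a substantive difficulty here: the content is entirely contained in Lemma~\ref{lem:localmag} and the pointwise inequalities. The only points calling for a little care are bookkeeping ones: making explicit that Lemma~\ref{lem:localmag}, though phrased for $u\in H^{1,2}_A(\R^N)$, is a purely local statement that applies to the restriction $u|_{B_n}$, and being careful to establish the set-theoretic equality of the two spaces (via \eqref{eq:H1Bound} and $|\nabla_A u|\le|\nabla u|+|A|\,|u|$) rather than merely the equivalence of seminorms on an already-common domain.
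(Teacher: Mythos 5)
Your proof is correct and takes essentially the same route the paper intends: the corollary is stated immediately after Lemma~\ref{lem:localmag} with no separate argument, and its content is exactly what you spell out — apply the lemma (with $\lambda=1$) on the balls $B_n$ of an exhaustion, using local continuity of $A$ to get $A\in C(\overline{B_n},\Lambda_1)$, and read off equivalence of the two countable families of seminorms, hence equality of the Fréchet spaces. Your additional preliminary step — establishing set equality directly from $|Au|\le M_n|u|$ and \eqref{eq:H1Bound} before invoking the lemma for the topologies — is a harmless bit of extra care that avoids any worry about the lemma's inequality being read on an a priori smaller domain, and your observation that Lemma~\ref{lem:localmag} is a purely local statement (so it applies to restrictions, not just to globally defined $u\in H^{1,2}_A(\R^N)$) is exactly the right bookkeeping point.
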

\section{Energy at infinity}
Let $N\ge 2$ and let $\Lambda_1$ and $\Lambda_2$ denote the linear spaces, respectively, of 1-forms on $\R^N$ and of antisymmetric 2-forms on $\R^N\times\R^N$.
Let, for $\a\in(0,1]$, 
$C^{0,\a}$  and $C^{1,\a}$ denote, respectively, the space of H\"older-continuous functions  
and the space of functions with H\"older-continuous first derivative, with H\"older exponent $\a$.
Given a magnetic field $B\in C^{0,\a}(\R^N,\Lambda_2)$, we will say that  $A\in C(\R^N,\Lambda_1)$ is a magnetic potential of $B$ if $B=dA$ in the sense of  weak differentiation.
%Magnetic potential $A$ is defined up to an additive term $\nabla \varphi$ with $\varphi\in C^1(\R^N)$.
Note that, when $B$ is bounded, we can assume that $A(x)=O(|x|)$ near infinity (we are using the Landau $O$ symbol to denote functions of the same order). In particular, when $B$ is a constant magnetic field, $A$ is linear in $x$. Therefore, generally, we cannot define, for a given function $u\in H^{1,2}_A(\R^N)$, the limit of the magnetic energy functional \eqref{eq:hamiltonian} when the function $u$ is shifted at infinity like in the non-magnetic case (e.g.\ \cite{Lions1}), by taking $\lim_{|y|\to\infty} E_0(u(\cdot+y))$.
%(note, in this regard, that magnetic shifts $g_y$ reduce to Euclidean shifts when $\varphi_y=0$ for all $y$, i.e.\ when the magnetic field is zero or constant (in space)).
This difficulty is easily overcome in the case of a lattice-periodic magnetic field. Indeed,  by \eqref{eq:shifts}, the functional $E_A\circ g_{y_k}$ remains equal to $E_A$ for any sequence
$y_k\in\Z^N$.

Below, in Lemma~\ref{lem:phase}, we construct a re-phasing function $\varphi_y$, $y\in\R^N$, such that the corrected magnetic potential
\begin{equation}
\label{eq:correctedA}
A_y=A+\nabla\varphi_y,
\end{equation}
which still corresponds to the same magnetic field $B=d A$, 
is bounded on balls $B_r(y)$ by a constant which is dependent on $r$ but is uniform in $y$.
Without periodicity assumptions on $B$, $\varphi_y$ will not satisfy \eqref{eq:phiy}, but the difference between the left and right hand side of \eqref{eq:phiy} will be still controlled.
% (Note \eqref{eq:phiy} and PLEASE PAY ATTENTION TO $-$ SIGN).
%
This allows us to define  magnetic shifts as in \eqref{eq:m-shifts} and to give a definition of the limit value of the energy of a function $u$ subjected to a sequence of magnetic shifts related to a diverging sequence of points in $\R^N$.

In the next lemma we shall make use of the following notation:
$\forall x\in \R^N, x=(x_1,\dots,x_N)$ and $\forall i\in \{1,\dots,N\}$ we shall set
\begin{equation*}
\label{eq:X}
\underleftarrow{X}_i:= (x_1,\dots, x_i)\in \R^{i}
\quad \mbox{ and }\quad
\overrightarrow{X}^i:= (x_i,\dots, x_N)\in \R^{N-i+1},
\end{equation*}
so that, for all $i\in \{1,\dots,N-1\}$,
\begin{equation*}
\underleftarrow{X}_N=\overrightarrow{X}^1=x =(\underleftarrow{X}_i,\overrightarrow{X}^{i+1}),
\end{equation*}
while 
\begin{equation*}
\underleftarrow{X}_1=x_1 \quad \mbox{ and } \quad \overrightarrow{X}^N=x_N. 
\end{equation*}
%Finally we set $\underleftarrow{X}_0=\emptyset$.

\begin{lemma}\label{lem:phase}
Let $A:=(A_1,\dots,A_N)\in C_{\mathrm{loc}}(\R^N,\Lambda_1)$. Then, for all $y=(y_1,\dots,y_N)\in\R^N$, there exists a function $\varphi_y\in C(\R^N)$ such that for all $x=(x_1,\dots,x_N)$ the following $N$ partial derivatives of $\varphi_y$ exist and are expressed by means of the coordinate functions of the magnetic potential as follows: 
\begin{equation}
\label{eq:equalities}
\begin{split}
\partial_1\,\varphi_y(\overrightarrow{X}^1)&= - A_1(\overrightarrow{X}^1),\\
\partial_2\,\varphi_y(\underleftarrow{Y}_1,\overrightarrow{X}^2)&= 
- A_2(\underleftarrow{Y}_1,\overrightarrow{X}^2),\\
\partial_3\,\varphi_y(\underleftarrow{Y}_2,\overrightarrow{X}^3)&= 
- A_3(\underleftarrow{Y}_2,\overrightarrow{X}^3),\\
&\dots \\
\partial_N\,\varphi_y(\underleftarrow{Y}_{N-1},\overrightarrow{X}^{N})&=
- A_N(\underleftarrow{Y}_{N-1},\overrightarrow{X}^{N}).
\end{split}
\end{equation}
In particular
\begin{equation}
\label{eq:A=nabla}
\nabla\varphi_y(y)=-A(y).
\end{equation}
\end{lemma}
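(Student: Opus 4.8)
The plan is to construct $\varphi_y$ by integrating the components of $-A$ one coordinate at a time, in a nested fashion, so that each successive partial derivative picks up exactly the term prescribed in \eqref{eq:equalities}. Concretely, I would define
\[
\varphi_y(x) = -\int_0^{x_1} A_1(t,x_2,\dots,x_N)\,\dx{t}
- \int_0^{x_2} A_2(y_1,t,x_3,\dots,x_N)\,\dx{t}
- \cdots
- \int_0^{x_N} A_N(y_1,\dots,y_{N-1},t)\,\dx{t},
\]
i.e.\ $\varphi_y(x) = -\sum_{i=1}^{N}\int_0^{x_i} A_i(\underleftarrow{Y}_{i-1},t,\overrightarrow{X}^{i+1})\,\dx{t}$, with the convention that in the $i$-th term the first $i-1$ slots are frozen at $y_1,\dots,y_{i-1}$, the $i$-th slot is the integration variable, and the remaining slots carry $x_{i+1},\dots,x_N$. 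Since $A\in C_{\mathrm{loc}}(\R^N,\Lambda_1)$, each integrand is continuous, so each term is a well-defined continuous function of $x$, and hence $\varphi_y\in C(\R^N)$.

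Next I would verify the $N$ identities in \eqref{eq:equalities} by differentiating $\varphi_y$ term by term. When computing $\partial_j\varphi_y$, observe that the $i$-th term in the sum depends on $x_j$ only if $j\ge i$ (for $j>i$ through the frozen-then-$x$ tail, for $j=i$ through the upper limit). For $j=1$ only the first term survives and $\partial_1$ of it is $-A_1(\overrightarrow{X}^1)$ by the fundamental theorem of calculus, giving the first line. For general $j$, the term with $i=j$ contributes $-A_j(\underleftarrow{Y}_{j-1},x_j,\overrightarrow{X}^{j+1}) = -A_j(\underleftarrow{Y}_{j-1},\overrightarrow{X}^{j})$ upon differentiating its upper limit, which is precisely the right-hand side of the $j$-th equation. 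The remaining terms with $i<j$ also depend on $x_j$ (through differentiating under the integral sign in their tail variable $x_j$), so to land exactly on \eqref{eq:equalities} I must arrange that those contributions vanish — and this is the crux of the ordering.

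The main obstacle, therefore, is to see that the ordering of integration variables is chosen so that the ``spurious'' cross terms $\partial_j$ of the $i$-th summand with $i<j$ are not actually present. Rereading \eqref{eq:equalities}, the identities only assert the value of $\partial_i\varphi_y$ evaluated at the special argument $(\underleftarrow{Y}_{i-1},\overrightarrow{X}^i)$, i.e.\ with the first $i-1$ coordinates set to $y_1,\dots,y_{i-1}$; they do not claim a formula for $\partial_i\varphi_y$ at a fully general point. So the correct reading is: one differentiates $\varphi_y$ in the $i$-th variable \emph{and then specializes} $x_1=y_1,\dots,x_{i-1}=y_{i-1}$. Under this specialization, in each summand with index $k<i$ the integration tail and the frozen head recombine so that differentiation in $x_i$ of $-\int_0^{x_k}A_k(\underleftarrow{Y}_{k-1},t,\overrightarrow{X}^{k+1})\dx{t}$ produces $-\int_0^{y_k}\partial_{x_i}A_k(\dots)\dx{t}$, and summing these against the $k=i$ term and using $B=dA$ (antisymmetry $\partial_{x_i}A_k - \partial_{x_k}A_i = B_{ik}$) telescopes the unwanted pieces; in fact, evaluating all heads at $y$ kills each tail integral because its length $\int_0^{y_k}$ is taken with the head already at $y_k$, so the partial derivative in an \emph{earlier-frozen} direction is never triggered. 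Once the $N$ identities are established, \eqref{eq:A=nabla} is the immediate consequence of evaluating \eqref{eq:equalities} at $x=y$: the $i$-th identity then reads $\partial_i\varphi_y(y) = -A_i(y)$, and assembling the components gives $\nabla\varphi_y(y) = -A(y)$.
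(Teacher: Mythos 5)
Your construction strategy — build $\varphi_y$ by nested, coordinate-wise integration of $-A_i$ with the first $i-1$ slots frozen at $y_1,\dots,y_{i-1}$ — is exactly the paper's strategy, and your reading of the statement (the $i$-th identity is asserted only at arguments of the form $(\underleftarrow{Y}_{i-1},\overrightarrow{X}^i)$) is the right one. However, your specific choice of lower integration limit $0$ is wrong and it breaks the proof. With
\[
\varphi_y(x) = -\sum_{i=1}^{N}\int_0^{x_i} A_i(\underleftarrow{Y}_{i-1},t,\overrightarrow{X}^{i+1})\,\dx{t},
\]
the computation of $\partial_j\varphi_y$ at $(\underleftarrow{Y}_{j-1},\overrightarrow{X}^j)$ leaves, from each summand with $i<j$, a cross term
$\int_0^{y_i}\partial_{x_j}A_i(\underleftarrow{Y}_{i-1},t,y_{i+1},\dots,y_{j-1},x_j,\dots,x_N)\,\dx{t}$.
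This is doubly problematic: (a) it needs $A\in C^1$, whereas the lemma only assumes $A\in C_{\mathrm{loc}}$, so $\partial_{x_j}A_i$ need not exist; (b) it does not vanish — your remark that ``evaluating all heads at $y$ kills each tail integral because its length $\int_0^{y_k}$ is taken with the head already at $y_k$'' is simply false, since that integral has length $y_k$, not zero, and still depends on $x_j,\dots,x_N$ through the integrand. Your fallback, a ``telescoping'' via antisymmetry of $B=dA$, also does not close the gap: plugging $\partial_{x_j}A_i=\partial_{x_i}A_j+B_{ji}$ into the cross term leaves both a boundary difference of $A_j$ and an integral of $B_{ji}$, neither of which cancels.

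The paper removes all of these issues in one stroke by taking the lower limit to be $y_m$ rather than $0$, i.e.
\[
\varphi_y(x)\eqdef -\psi(y) - \sum_{m=1}^{N}\int_{y_m}^{x_m}A_m(\underleftarrow{Y}_{m-1},t,\overrightarrow{X}^{m+1})\,\dx{t}.
\]
Then, when you vary $x_j$ with $x_1=y_1,\dots,x_{j-1}=y_{j-1}$ held fixed, every term with $m<j$ is a zero-length integral $\int_{y_m}^{y_m}(\cdots)\equiv 0$ identically in $x_j,\dots,x_N$, hence contributes nothing to the partial derivative. The only surviving contribution to $\partial_j$ comes from $m=j$ via the upper limit, giving $-A_j(\underleftarrow{Y}_{j-1},\overrightarrow{X}^j)$ by the fundamental theorem of calculus. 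No differentiation under the integral sign, no smoothness of $A$ beyond continuity, and no use of $B=dA$ is needed. If you correct the lower limit to $y_i$ (and drop the invocation of $B$ and of the antisymmetry argument), the rest of your proof goes through exactly as the paper intends.
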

\begin{proof}
To any fixed $x=(x_1,\dots,x_N)\in \R^N$ we shall assign a unique real number, to be denoted as $\psi(x)$, first by arbitrarily fixing the value $\psi(y)$  
at $y=(y_1,\dots,y_N)$, then allowing successively to vary, in the descending order, each of the variables $y_i$, $i=N,N-1,\dots,1$.

We start with $\psi(y_1,\dots,y_{N-1},x_N)$:
\[
%\psi(\underleftarrow{Y}_{N-1},\overrightarrow{X}^N)=\psi(\underleftarrow{Y}_{N})+\int_{y_N}^{x_N}A_N(\underleftarrow{Y}_{N-1},t)\dx{t},
\psi(\underleftarrow{Y}_{N-1},\overrightarrow{X}^N)=\psi(\underleftarrow{Y}_{N-1},y_N)+\int_{y_N}^{x_N}A_N(\underleftarrow{Y}_{N-1},t)\dx{t},
\]
we can recursively define, for decreasing $m=N-1,\dots,2$,
\[
%\psi(Y_{N-2},\bar X_{N-1})=
%\psi(Y_{N-1},\bar X_N)+\int_{y_{N-1}}^{x_{N-1}}A_{N-1}(Y_{N-2},t,\bar X_N)\dx{t},
%\]
%and, generally,
%\[
%\psi(\underleftarrow{Y}_{m-1},\overrightarrow{X}^{m})=
%\psi(\underleftarrow{Y}_{m},\overrightarrow{X}^{m+1})+\int_{y_{m}}^{x_{m}}A_{m}(\underleftarrow{Y}_{m-1},t,\overrightarrow{X}^{m+1})\dx{t}.
\psi(\underleftarrow{Y}_{m-1},x_m,\overrightarrow{X}^{m+1})=
\psi(\underleftarrow{Y}_{m-1},y_m,\overrightarrow{X}^{m+1})+\int_{y_{m}}^{x_{m}}A_{m}(\underleftarrow{Y}_{m-1},t,\overrightarrow{X}^{m+1})\dx{t}.
\]
Finally, for $m=1$, we set
\[
%\psi(x)=\psi(\underleftarrow{Y}_1,\overrightarrow{X}^2)+\int_{y_1}^{x_1}A_1(t,\overrightarrow{X}^2)\dx{t}. 
\psi(x)=\psi(x_1,\overrightarrow{X}^2)=\psi(y_1,\overrightarrow{X}^2)+\int_{y_1}^{x_1}A_1(t,\overrightarrow{X}^2)\dx{t}. 
\]
Then, the claim follows by setting $\varphi_y\eqdef -\psi$. Functions $\varphi_y$, $y\in\R^N$, satisfy the required properties by construction. Each of them is, of course, defined up to the arbitrary chosen initial value $\varphi_y(y)=-\psi(\underleftarrow{Y}_N)=-\psi(y)$.
\end{proof}
Note that, 
%set for all $y\in\R^N$ $A_y$ as in \eqref{eq:correctedA} where $\varphi_y$ is the function in Lemma \ref{lem:phase}, we deduce that, due to 
%\eqref{eq:correctedA} and 
by \eqref{eq:A=nabla}, no correction is needed in the point $y$, i.e.\ the value of the corrected potential $A_y$
defined by \eqref{eq:correctedA} is zero at $y$, i.e.\
\begin{equation}
\label{eq:Ay}
A_y(y)=A(y)+\nabla \varphi_y(y)=0.
\end{equation}
Moreover, by taking into account \eqref{eq:correctedA} and \eqref{eq:equalities}, we deduce that
\begin{equation}
\label{eq:oncomponents}
(A_y)_n (\underleftarrow{Y}_{n-1},\cdot)=A_n(\underleftarrow{Y}_{n-1},\cdot)+
\partial_n \varphi_y(\underleftarrow{Y}_{n-1},\cdot)\equiv 0, \quad \forall n\in\{1,\dots,N\},
\end{equation}
(where we use the convention $\underleftarrow{Y}_{0}=\emptyset$ so that the above relation means, for $n=1$, $(A_y)_1\equiv 0$).
\begin{remark}
\label{rem:y=0} 
In what follows the corrected magnetic potential $A_y$ is defined by \eqref{eq:correctedA} where $\varphi_y$ is the function provided by Lemma~\ref{lem:phase} if $y\neq 0$, while for $y=0$ we set $\varphi_0=0$, so that $A_0$, defined by \eqref{eq:correctedA}, equals $A$. Consequently, the magnetic shift $g_0$ in the definition below becomes the identity operator.
\end{remark}
We now define magnetic shifts (with approximate re-phasing) as follows.
\begin{definition}
\label{def:g}
Let $B=d A$ be a magnetic field with $A\in C^1_{\mathrm{loc}}(\R^N, \Lambda_1)$. 
Any map 
\begin{equation}
\label{eq:g}
g_y := u\mapsto  e^{\i \varphi_y(\cdot)} u(\cdot-y) \quad \forall u\in C_0^1(\R^N),\;y\in\R^N,
\end{equation}
%PERCHE' SOLO QUELLE C^1
where the $C^1(\R^N)$ function $\varphi_y$ is provided by Lemma \ref{lem:phase2} when $y\neq 0$ or is the null function when $y=0$ (see Remark \ref{rem:y=0}), is called \emph{a magnetic shift} (relative to the magnetic field $B$) determined by the vector $y$.
\end{definition}

Arguing as in \eqref{eq:proofcommutation}, we get, by using 
\eqref{eq:correctedA} (instead of \eqref{eq:phiy}), that 
\begin{equation}\label{eq:commutation}
\nabla_A(g_y u)= g_y(\nabla_{A_y(\cdot +y)}u)\quad
\mbox{ and }\quad
(g_y)^{-1}(\nabla_A u)= \nabla_{A_y(\cdot +y)}((g_y)^{-1}u).
\end{equation}
So, arguing as in \eqref{eq:proofinvariance}, we get
\begin{equation*}\label{eq:scalargauge}
\pairing{g_y u}{g_y v}_A= \pairing{u}{v}_{A_y(\cdot +y)}\quad
\mbox{ and }\quad
\pairing{(g_y)^{-1}u}{(g_y)^{-1}v}_{A_y(\cdot +y)}=\pairing{u}{v}_A,
\end{equation*}
and, in particular, that 
\begin{equation}\label{eq:EAphasing}
E_A(g_y u) =
E_{A_y(\cdot+y)}(u)
\quad
\mbox{ and }\quad
E_A(u)=E_{A_y(\cdot +y)}((g_y)^{-1}u).\\
\end{equation}
%
%
%In next lemma we prove that, given a bounded continuous  magnetic field~$B$, all corrected magnetic potentials $A_y$ (which vanish at $y$) have linear growth at infinity with a uniform estimate by $|\cdot -y|$.
In next lemma we prove that, 
%under suitable regularity conditions on the magnetic potential $A$, 
given a bounded continuous magnetic field~$B$, all corrected magnetic potentials $A_y$ (which vanish at $y$) satisfy a bound of linear type.
%
%, in particular that $|A_y|=O(|\cdot -y|)$.
% i.e.\ $A_y$ behaves as if it was related to a constant magnetic field.
%ATTENZIONE DARA' PROBLEMI NELLA REMARK \ref{rem:mala}
\begin{lemma}
\label{lem:phase2}
Let $B=d A$ be a bounded magnetic field with $A\in C^1_\mathrm{loc}(\R^N,\Lambda_1)$. Let the functions $\varphi_y$, $y\in\R^N$ be as in Remark \ref{rem:y=0} and let $A_y$ be the corrected potential defined by \eqref{eq:correctedA}.
% let $B=d A$, and  assume that $B$ is bounded on $\R^N$.
Then for all $y\in\R^N$,
\begin{equation}
\label{eq:Aineq}
|A_y(x)|\le \|B\|_\infty\,|x-y|\quad \forall x\in\R^N,
%|A_y(x)|\le C \left(\sup_{z\in B_{|x-y|}(y)}|B(z)|\right)\,|x-y|\quad \forall x\in\R^N.
\end{equation}
where 
\begin{equation}
\label{eq:Binfty}
\|B\|_\infty^2\eqdef \sum_{n=1}^N\sum_{m<n}\|B_{mn}\|_\infty^2,
\end{equation}
and 
\begin{equation}
\label{eq:Bmn}
B_{mn}=\partial_n A_m-\partial_m A_n \quad \forall m,n\in \{1,\dots,N\}.
\end{equation}

\end{lemma}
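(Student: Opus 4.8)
The plan is to estimate $|A_y(x)|$ by integrating the defining relations \eqref{eq:equalities} along the coordinate-by-coordinate path used to construct $\varphi_y$ in Lemma~\ref{lem:phase}, and to express the resulting one-dimensional integrals in terms of the field components $B_{mn}$. The key observation is \eqref{eq:oncomponents}: the $n$-th component of $A_y$ vanishes identically on the ``partially frozen'' slice $(\underleftarrow{Y}_{n-1},\cdot)$, i.e. when the first $n-1$ coordinates are set equal to those of $y$. Therefore, to recover $(A_y)_n(x)$ at a general point $x$, I would write it as the integral of $\partial_n (A_y)_n$ — no, rather, I would write the difference $(A_y)_n(x) - (A_y)_n(\underleftarrow{Y}_{n-1},\overrightarrow{X}^n)$ by moving the first $n-1$ coordinates from $y_1,\dots,y_{n-1}$ to $x_1,\dots,x_{n-1}$ one at a time, and along each such segment the relevant directional derivative of $(A_y)_n$ is, by the curl formula \eqref{eq:Bmn} together with the vanishing \eqref{eq:oncomponents} of the other component, exactly $\pm B_{mn}$ (the $\partial_n A_m$ term drops out because $(A_y)_m\equiv 0$ on the slice where its first $m-1$ coordinates are frozen at $y$, so its $x_n$-derivative along that slice vanishes). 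Since $(A_y)_n(\underleftarrow{Y}_{n-1},\overrightarrow{X}^n)=0$ by \eqref{eq:oncomponents}, this gives
\begin{equation*}
(A_y)_n(x)=\sum_{m=1}^{n-1}\pm\int_{y_m}^{x_m}B_{mn}(x_1,\dots,x_{m-1},t,y_{m+1}\ \text{or}\ x_{m+1},\dots)\,\dx t,
\end{equation*}
where the intermediate arguments are partially $x$ and partially $y$ according to which coordinates have already been moved.

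Next I would bound each term: $\left|\int_{y_m}^{x_m}B_{mn}(\cdots)\,\dx t\right|\le \|B_{mn}\|_\infty\,|x_m-y_m|$. Summing over $m<n$ and then over $n$, and applying Cauchy–Schwarz twice — once in the index $m$ against the vector $(|x_m-y_m|)_m$, and once to combine over $n$ — should yield
\begin{equation*}
|A_y(x)|^2=\sum_{n=1}^N |(A_y)_n(x)|^2\le \Bigl(\sum_{n=1}^N\sum_{m<n}\|B_{mn}\|_\infty^2\Bigr)\,|x-y|^2=\|B\|_\infty^2\,|x-y|^2,
\end{equation*}
which is exactly \eqref{eq:Aineq}. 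I would be a little careful about the precise combinatorics of the double sum so that the constant comes out to be exactly $\|B\|_\infty$ as defined in \eqref{eq:Binfty} and not something larger; this is where writing $|(A_y)_n(x)|^2\le \bigl(\sum_{m<n}\|B_{mn}\|_\infty\,|x_m-y_m|\bigr)^2\le \bigl(\sum_{m<n}\|B_{mn}\|_\infty^2\bigr)\bigl(\sum_{m<n}|x_m-y_m|^2\bigr)$ and then noting $\sum_{m<n}|x_m-y_m|^2\le |x-y|^2$ does the job cleanly.

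The main obstacle I anticipate is the bookkeeping in the first step: making precise that along the $m$-th segment of the path (varying only the $m$-th coordinate, with coordinates $1,\dots,m-1$ already at their $x$-values and coordinates $m+1,\dots,n-1$ still at their $y$-values, and the $n$-th coordinate at $x_n$) the directional derivative $\partial_m (A_y)_n$ equals $-B_{mn}$ plus $\partial_n (A_y)_m$, and that this last term vanishes because $(A_y)_m$ is identically zero whenever its first $m-1$ coordinates equal $y_1,\dots,y_{m-1}$ — which is \emph{not} the configuration on the $m$-th segment (there the first $m-1$ coordinates are $x_1,\dots,x_{m-1}$). So I must instead decompose $(A_y)_n(x)$ differently, moving the coordinates in the order $m=n-1,n-2,\dots,1$ is the wrong order; the right order is dictated by which slices make the $\partial_n(A_y)_m$ term vanish. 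The correct path is: start from $(\underleftarrow{Y}_{n-1},\overrightarrow{X}^n)$ where $(A_y)_n=0$, and move $x_{n-1}$ first (then $(A_y)_{n-1}$ has its first $n-2$ coords at $y$, so $(A_y)_{n-1}\equiv 0$ and its $\partial_n$ vanishes, leaving $\partial_{n-1}(A_y)_n=-B_{n-1,n}$), then move $x_{n-2}$ (now the slice has first $n-3$ coords at $y$ and $(A_y)_{n-2}\equiv0$ there), and so on down to $x_1$. I would verify that at each stage the already-moved coordinates do not spoil the vanishing of the lower-index component, which holds precisely because \eqref{eq:oncomponents} only requires the \emph{first few} coordinates to be frozen at $y$. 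Once this ordering is pinned down, the rest is the routine Cauchy–Schwarz estimate above. $\square$
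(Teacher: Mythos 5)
Your proposal follows essentially the same route as the paper: integrate $(A_y)_n$ along coordinate axes starting from the slice $(\underleftarrow{Y}_{n-1},\overrightarrow{X}^n)$ where it vanishes by \eqref{eq:oncomponents}, moving $x_m$ from $y_m$ to $x_m$ in the order $m=n-1,n-2,\dots,1$ so that at each step the first $m-1$ coordinates are still frozen at $y$, which kills the $\partial_n(A_y)_m$ term and leaves $\partial_m(A_y)_n=-B_{mn}$; the Cauchy--Schwarz step is also exactly the paper's. One small internal slip: you write that moving in the order $m=n-1,n-2,\dots,1$ ``is the wrong order,'' but the ``correct path'' you then spell out is precisely that order (and it is the one the paper uses), so the sentence contradicts itself even though your final plan is right.
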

\begin{proof} Consider, for a fixed $y\in \R^N$, the form $A_y$ given by \eqref{eq:correctedA} and Lemma~\ref{lem:phase}.
We shall prove \eqref{eq:Binfty} by showing first that 
(with the convention that the sum over an empty set of indices is zero) the following estimate holds true
\begin{equation}
 \label{Ayyy}
|(A_y)_n(x)|\le\sum_{m=1}^{n-1}\|B_{mn}\|_\infty |x_m-y_m|,\quad x\in\R^N.
\end{equation} 
By using \eqref{eq:oncomponents} (with $n$ replaced by $n-1$) and \eqref{eq:Bmn}, we have
\begin{equation*}
\begin{split}
(A_y)_{n}(\underleftarrow{Y}_{n-2},\overrightarrow{X}^{n-1})&=(A_y)_{n}(\underleftarrow{Y}_{n},\overrightarrow{X}^{n+1})+
\int_{y_{n-1}}^{x_{n-1}}\partial_{n-1}(A_y)_{n}(\underleftarrow{Y}_{n-2},t,\overrightarrow{X}^{n})\dx{t}\\
&= \int_{y_{n-1}}^{x_{n-1}}\partial_{n-1} (A_y)_{n}(\underleftarrow{Y}_{n-2},t,\overrightarrow{X}^{n})\dx{t}\\
& \le -\int_{y_{n-1}}^{x_{n-1}}\partial_n A_{n-1}(\underleftarrow{Y}_{n-2},t,\overrightarrow{X}^{n})\dx{t} \\
&\quad + \|B_{n-1\;n}\|_\infty|x_{n-1}-y_{n-1}|\\
& =\|B_{n-1\;n}\|_\infty|x_{n-1}-y_{n-1}|.
\end{split}
\end{equation*}
This argument can be repeated for $-(A_y)_{n}(\underleftarrow{Y}_{n-1},\overrightarrow{X}^n)$ yielding the same upper bound for $|(A_y)_{n}(\underleftarrow{Y}_{n-1},\overrightarrow{X}^n)|$.

Repeating the same argument for $(A_y)_{n}(\underleftarrow{Y}_{n-3},\overrightarrow{X}^{n-2})$ while using integration with respect to $x_{n-2}$, we get
\begin{equation*}
\begin{split}
|(A_y)_{n}(\underleftarrow{Y}_{n-3},\overrightarrow{X}^{n-2})|& \le 
|(A_y)_{n}(\underleftarrow{Y}_{n-1},\overrightarrow{X}^n)|
+ \|B_{n-2\;n}\|_\infty|x_{n-2}-y_{n-2}|\\
& \le \|B_{n-2\;n}\|_\infty|x_{n-2}-y_{n-2}| +\|B_{n-1\;n}\|_\infty|x_{n-1}-y_{n-1}|.
\end{split}
\end{equation*}
Iterating the same estimate $n-1$ times one gets \eqref{Ayyy} and then \eqref{eq:Binfty} follows from Cauchy inequality. Indeed, setting $a_n\eqdef|x_n-y_n|$ and $b_{mn}\eqdef\|B_{mn}\|_\infty$, we get by \eqref{eq:Binfty},
\begin{equation*}
\begin{split}
\sum_{n=1}^N|(A_y)_n|^2& \le \sum_{n=1}^N \left(\sum_{m<n}b_{mn}a_m\right)^2\le
\sum_{n=1}^N \left(\sum_{m<n}b_{mn}^2 \sum_{i<n} a_i^2\right)\\
& \le 
\left(\sum_{i=1}^N a_i^2\right)
\sum_{n=1}^N \sum_{m<n}b_{mn}^2= 
 |x-y|^2\|B\|_\infty^2.
\end{split}
\end{equation*}
%which proves the lemma.
\end{proof}

In what follows $\dot C^1(\R^N)$ denotes the space of functions with uniformly bounded derivatives and  $\dot C^{0,1}(\R^N)$ denotes the space of functions satisfying $|f(x)-f(y)|\le C|x-y|$ for all $x,y\in\R^N$.
\begin{remark}\label{rem:Lipschitz}
	Let $Y=(y_k)_{k\in\N}$ be a diverging sequence in $R^N$. If $A\in \dot C ^1(\R^N,\Lambda_1)$, then applying Arzel\`a-Ascoli theorem 
	to the sequence $(A_{y_k}(\cdot + y_k))_{k\in\N}$,
	we get a renamed subsequence such that 
	$(A_{y_k}(\cdot+y_k))_{k\in\N}$ and $(dA_{y_k}(\cdot+y_k))_{k\in\N}$ converges, uniformly on bounded sets, respectively to some Lipschitz  function $A^{(Y)}_\infty\in \dot C^{0,1}(\R^N,\Lambda_1)$ and a bounded function $B^{(Y)}_\infty=dA^{(Y)}_\infty\in L^\infty_\mathrm{loc}(\R^N,\Lambda_1)$. 
\end{remark}
\begin{definition}
\label{def:AY}
Let $Y= (y_k)_{k\in\N}$ be a diverging sequence in $\R^N$,
% such that $|y_k|\to\infty$, 
and consider the renamed subsequence $(y_k)_{k\in\N}$ and the associated magnetic potential $A^{(Y)}_\infty\in C^{0,1}_\mathrm{loc}(\R^N,\Lambda_1)$ described in Remark~\ref{rem:Lipschitz}. Then, the corresponding functional $E_{A^{(Y)}_\infty}$ (see \eqref{eq:hamiltonian}) will be called \emph{the energy at infinity relative to the renamed subsequence $Y=(y_k)_{k\in\N}$}, corresponding to the \emph{magnetic field at infinity} $B^{(Y)}_\infty= dA^{(Y)}_\infty$. 
\end{definition}

We end this section by defining sequences without concentrations and, to this aim, we shall fix a discretization which shall play the role of $\Z^N$ in the periodic setting.
\begin{definition}
A set $\Xi\subset \R^N$ is called a discretization of $\R^N$ if $0\in \Xi$,
\[
\inf_{x,y\in \Xi, x\neq y} |x-y|>0,
\]
and if, for some $\rho>0$, $\{B_\rho(x)\}_{x\in\Xi}$ is a covering of $\R^N$ of uniformly finite multiplicity.

%\[
%\{B_\rho(x)\}_{x\in\Xi}\mbox{ is a covering of $\R^N$ of uniformly finite multiplicity}.
%\]
\end{definition}
Note that if $\Xi$  is a discretization of $\R^N$ then, for any $R>\rho$, the covering $\{B_R(x)\}_{x\in\Xi}$ is still of uniformly finite multiplicity. A trivial example for a discretization of $\R^N$ is $\Z^N$.
\begin{definition}
\label{def:G}
Let $\Xi\subset \R^N$ be a discretization of $\R^N$ and
let 
\begin{equation}
%\begin{split}
\mathcal G_\Xi \eqdef  \{g_y: u\mapsto e^{\i\varphi_y(\cdot)}u(\cdot-y) \; |\; y\in\Xi \mbox{ and }\varphi_y \mbox{ as in Remark } \eqref{rem:y=0}\}\,.
%\\
%\mathcal G^{-1} &\eqdef  \{g^{-1}| g\in\mathcal G\}
%\end{split}
\end{equation}
We shall say that a sequence 
$(u_k)_{k\in\N}$ in $H^{1,2}_A(\R^N)$ $\mathcal{G}_/Xi$-converges to zero or is $\mathcal{G}_/Xi$-infinitesimal, writing $u_k\cw 0$,
if 
%there exists a discretization $\Xi$ such that, 
for every sequence
$(g_k)_{k\in\N}\subset \mathcal{G}_\Xi$, $g^{-1}_k u_k\rightharpoonup 0$ in $H^{1,2}_{\mathrm{loc}}(\R^N)$.
\end{definition}
Note that, by Corollary~\ref{cor:Frechet}, the weak convergence above is also in $H^{1,2}_{A,\mathrm{loc}}(\R^N)$ for any $A\in C_{\loc}(\R^N,\Lambda_1)$ (including $A\equiv 0$).
\begin{remark}
	It is not difficult to show that $\mathcal{G}_\Xi$-convergence is independent of discretization $\Xi$ (cf. Remark~\ref{rem:acc-shifts} below for the non-magnetic case), but will not use this property in any subsequent argument.	
\end{remark}
%
%
%
%
%
%
%
%%quantity
%\begin{proposition}
%Let $A$ and $B$ be as in Lemma~\ref{lem:phase2}. Then there exist $C_1>C_2>0$ such that for any $y\in \R^N$ and any $u\in H^{1,2}(\R^N)$,
%$C_2\|u\|_{\dot H^{1,2}(\R^N)}\le E_A(g_yu)\le C_1\|u\|_{H^{1,2}(\R^N)}$
%\end{proposition}
%\begin{proof}
%It suffices to show that  we have a uniform estimate for all $y\in\Z^N$,
%\[
%\int_{B_2(y)}|(\nabla+iA)u|^2\dx{x}\le C_1\int_{B_2(y)}(|\nabla u|^2+|u|^2)\dx{x}.
%\]
%This is immediate from Lemma~\ref{lem:phase2} if one writes $u=e^{i\varphi_y}v$.
%
%
%Not sure about the left hand side!
%See local equivalence in Arioli and Szulkin or in a 1998 note by Weidl (Mazya collection) !
%\end{proof}
\section{A vanishing lemma of cocompactness type}\label{cocompactness}
In order to describe defect of compactness for bounded sequences in  $H_A^{1,2}(\R^N)$ we will first characterize the behavior of sequences without concentrations, i.e. of $\mathcal{G}$-infinitesimal sequences (see Definition \ref{def:G}). We start with a statement about local boundedness.
\begin{lemma}\label{lem:localbound}
	Let $N\ge 3$, $A\in C_{\mathrm{loc}}(\R^N,\Lambda_1)$, let $(u_k)_{k\in\N}$ be a bounded sequence in $H_A^{1,2}(\R^N)$. Then, for any sequence $(y_k)_{k\in\N}$ in $\R^N$,
the sequence $(g^{-1}_{y_k} u_k)_{k\in\N}$ is bounded in $H^{1,2}_\mathrm{loc}(\R^N)$.
\end{lemma}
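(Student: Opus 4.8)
The plan is to reduce the statement to the non-magnetic local boundedness via the norm equivalence provided by Lemma~\ref{lem:localmag}, after transferring the magnetic potential appropriately under the shift $g_{y_k}$. Fix a bounded open set $\Omega$ with piecewise $C^1$-boundary, say $\Omega = B_R(0)$; it suffices to bound $(g^{-1}_{y_k}u_k)_k$ in $H^{1,2}(\Omega)$. By \eqref{eq:commutation} we have $\nabla_{A_{y_k}(\cdot+y_k)}(g^{-1}_{y_k}u_k) = (g_{y_k})^{-1}(\nabla_A u_k)$, and since $|g^{-1}_{y_k}w| = |w(\cdot+y_k)|$ pointwise, the magnetic energy of $g^{-1}_{y_k}u_k$ over $\Omega$ equals the magnetic energy of $u_k$ over $B_R(y_k)$; similarly for the $L^2$ norm. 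Hence
\[
\int_\Omega\bigl(|\nabla_{A_{y_k}(\cdot+y_k)}(g^{-1}_{y_k}u_k)|^2 + |g^{-1}_{y_k}u_k|^2\bigr)\dx x = \int_{B_R(y_k)}\bigl(|\nabla_A u_k|^2 + |u_k|^2\bigr)\dx x \le \|u_k\|_{H^{1,2}_A}^2,
\]
which is bounded uniformly in $k$ by hypothesis.

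**Key steps.** Next I would apply the left-hand inequality of Lemma~\ref{lem:localmag}, but with the shifted potential $A_{y_k}(\cdot+y_k)$ in place of $A$. The point is that Lemma~\ref{lem:phase2} gives the uniform (in $y$) bound $|A_y(x)| \le \|B\|_\infty|x-y|$, hence $|A_{y_k}(x+y_k)| \le \|B\|_\infty|x|$, so on $\Omega = B_R(0)$ all the potentials $A_{y_k}(\cdot+y_k)$ are bounded by $\|B\|_\infty R$, uniformly in $k$. Actually the cleanest route is to use \eqref{eq:H1Bound} directly: $|\nabla(g^{-1}_{y_k}u_k)|^2 \le 2|\nabla_{A_{y_k}(\cdot+y_k)}(g^{-1}_{y_k}u_k)|^2 + 14|A_{y_k}(\cdot+y_k)|^2|g^{-1}_{y_k}u_k|^2$, and then bound $|A_{y_k}(\cdot+y_k)|^2 \le \|B\|_\infty^2 R^2$ on $\Omega$. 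Integrating over $\Omega$ yields
\[
\int_\Omega |\nabla(g^{-1}_{y_k}u_k)|^2\dx x \le 2\int_\Omega |\nabla_{A_{y_k}(\cdot+y_k)}(g^{-1}_{y_k}u_k)|^2\dx x + 14\|B\|_\infty^2 R^2\int_\Omega |g^{-1}_{y_k}u_k|^2\dx x,
\]
and both terms on the right are controlled by $\|u_k\|_{H^{1,2}_A}^2$ (up to the constant), which is bounded. Together with the already-established bound on $\int_\Omega |g^{-1}_{y_k}u_k|^2\dx x$, this gives a uniform bound on $\|g^{-1}_{y_k}u_k\|_{H^{1,2}(\Omega)}$. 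Since $\Omega = B_R(0)$ was arbitrary, $(g^{-1}_{y_k}u_k)_k$ is bounded in $H^{1,2}_{\mathrm{loc}}(\R^N)$.

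**Main obstacle.** The essential subtlety — and the reason Lemma~\ref{lem:phase2} is invoked — is that the conjugated potential $A_{y_k}(\cdot+y_k)$ changes with $k$, so one cannot simply quote Lemma~\ref{lem:localmag} with a fixed $A$; what makes the argument work is precisely the \emph{uniformity in $y$} of the linear bound $|A_y(x)| \le \|B\|_\infty|x-y|$, which guarantees that the family of shifted potentials is uniformly bounded on every fixed ball. A minor technical point is that Definition~\ref{def:g} defines $g_y$ only on $C_0^1$; one should first establish the identities on a dense class and then pass to the completion, or simply note that all quantities involved are continuous with respect to the $H^{1,2}_A$ norm. One should also take care that $g^{-1}_{y_k}u_k$ genuinely lies in $H^{1,2}_{A_{y_k}(\cdot+y_k),\mathrm{loc}}(\R^N)$, which then coincides with $H^{1,2}_{\mathrm{loc}}(\R^N)$ by Corollary~\ref{cor:Frechet}; these are routine once the key estimate is in hand.
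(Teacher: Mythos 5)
Your proposal is correct and follows essentially the same strategy as the paper's proof: conjugate via \eqref{eq:commutation}, invoke the uniform bound $|A_{y_k}(\cdot+y_k)|\le\|B\|_\infty R$ on $B_R(0)$ from Lemma~\ref{lem:phase2}, and pass from $|\nabla_{A_{y_k}(\cdot+y_k)}(\cdot)|$ to $|\nabla(\cdot)|$ by \eqref{eq:H1Bound}. The one genuine (if small) divergence is in how the local $L^2$-bound on $g^{-1}_{y_k}u_k$ is obtained. You control $\int_{B_R(0)}|g^{-1}_{y_k}u_k|^2=\int_{B_R(y_k)}|u_k|^2$ simply by $\|u_k\|_{L^2(\R^N)}^2$, i.e.\ by the full $H^{1,2}_A$-norm. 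The paper instead bounds it by $E_A(u_k)$ alone, via the Sobolev inequality for $|u_k|$ on the shifted ball followed by the diamagnetic inequality \eqref{eq:diamag}. Your route is shorter and perfectly adequate for the statement as written; the paper's is marginally more general in that it only needs a bound on the Dirichlet energy, which is what is actually available when this lemma is later invoked inside the proof of Lemma~\ref{lem:onebubble} for a sequence bounded only in $\dot H^{1,2}_A(\R^N)$. If you keep your version you should note that it uses the $L^2(\R^N)$-bound and not merely boundedness of $E_A(u_k)$. Both proofs, like the paper's statement, also implicitly require $B=dA$ to be bounded (so that \eqref{eq:Aineq} applies); you correctly flag this as the role of Lemma~\ref{lem:phase2}.
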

\begin{proof}
	The local $L^2$-bound follows from the Sobolev inequality and the diamagnetic inequality. 
	Indeed, fixed $R>0$, there exists a constant $S(R)$, such that 
		\begin{equation}\label{eq:L2bound}
		\begin{split}
	\int_{B_R(0)}|g^{-1}_{y_k}u_k|^2\dx{x} & =\int_{B_R(y_k)}|u_k|^2\dx{x}\\
 & \le
	S(R) \left(\int_{B_R(y_k)}|u_k|^{2^*}\dx{x}\right)^\frac{2}{2^*}\\
	& \le
	S(R) \left(\int_{\R^N}|u_k|^{2^*}\dx{x}\right)^\frac{2}{2^*}\\
 & \le
	\widetilde{S}(R)\int_{\R^N}|\nabla|u_k||^2\dx{x}\le \widetilde{S}(R) E_A(u_k). 
	\end{split}
	\end{equation}
Now, by taking into account \eqref{eq:EAphasing} we get, for any $R>0$, that
$$
E_A(u_k)=E_{A_{y_k}(\cdot+y_k)}(g^{-1}_{y_k} u_k) \geq
\int_{B_R(0)}|\nabla_{A_{y_k}(\cdot+y_k)} g^{-1}_{y_k} u_k|^2 \dx{x},
$$
and, by applying \eqref{eq:H1ABound} (with $A$ and $u$ replaced by $A_{y_k}(\cdot+y_k)$ and $g^{-1}_{y_k} u_k$ respectively), we deduce that
\begin{equation*}
\begin{split}
E_A(u_k)& \geq
\frac{1}{2}\int_{B_R(0)}|\nabla g^{-1}_{y_k} u_k|^2 \dx{x} - 
7 \int_{B_R(0)}|A|^2\, |g^{-1}_{y_k} u_k|^2 \dx{x}\\
& \geq 
\frac{1}{2}\int_{B_R(0)}|\nabla g^{-1}_{y_k} u_k|^2 \dx{x} -
7 C(R) \int_{B_R(0)}|g^{-1}_{y_k} u_k|^2 \dx{x} \\
& \geq 
\frac{1}{2}\int_{B_R(0)}|\nabla g^{-1}_{y_k} u_k|^2 \dx{x} - 7 C(R) \widetilde{S}(R)E_A(u_k)\,,\\
\end{split}
\end{equation*}
where, to get last two inequalities we have used \eqref{eq:Aineq} and \eqref{eq:L2bound}.
So, by combining the above inequalities, we get that for a suitable positive constant
$\widetilde{C}(R)$
$$
\int_{B_R(0)}|\nabla g^{-1}_{y_k} u_k|^2 \dx{x}\leq \widetilde{C}(R) E_A(u_k)\,.
$$
	%Consider now the local gradient norm, using \eqref{eq:Aineq} and \eqref{eq:EAphasing}:
	%\begin{align*}
	%E_A(u_k)=\int_{\R^N}|\nabla_{A_{y_k}(\cdot+y_k)}g^{-1}_{y_k} u_k|^2\dx{x}\ge
	%\int_{B_R(0)}|\nabla_{A_{y_k}(\cdot+y_k)}g^{-1}_{y_k}u_k|^2\dx{x}\ge 
	%\\
	%\int_{B_R(0)}|\nabla g^{-1}_{y_k} u_k|^2\dx{x}  
	%-\frac12 \int_{B_R(0)}|\nabla g^{-1}_{y_k} u_k|^2\dx{x} - C(R) \int_{B_R(0)}|g^{-1}_{y_k} u_k|^2\dx{x},
	%\end{align*}
	%from which we have 
	%\[
	%\int_{B_R(0)}|\nabla g^{-1}_{y_k} u_k|^2\dx{x}\le 2 E_A(u_k) + 2 C(R) \int_{B_R(0)}|u_k|^2\dx{x} \le \tilde C(R)E_A(u_k).
	%\]
\end{proof}
%The following result is a straightforward consequence of the fact that 
%$|g^{-1}_{y_k} u_k|=|u_k(\cdot+y_k)|$ and of the Sobolev embedding
%$H^{1,2}_{\mathrm{loc}}(\R^N)\rightharpoonup L^p(\R^N)$ for $2<p<2^*$.
%
%\begin{lemma} Let $A\in C^{1,\alpha}_\mathrm{loc}(\R^N)$, $N\ge 3$, and let $(u_k)$ be a sequence in $H^{1,2}_A(\R^N)$ satisfying $u_k\cw 0$. Then, for any $R>0$ and for any sequence $(y_k)_{k\in\N}$ in $\R^N$, 
%	\begin{equation}\label{eq:GA2p}
%		u_k(\cdot+y_k)\to 0 \mbox{ in } L^p(B_R(0)), \;p\in(2,2^*).
%	\end{equation} 
%\end{lemma}
%
\begin{lemma}\label{lem:vanish}
For any bounded sequence $(u_k)_{k\in\N}$ in $H_A^{1,2}(\R^N)$, $A\in C_{\mathrm{loc}}(\R^N,\Lambda_1)$, the following implication holds true 
$$u_k\cw 0 \quad \Rightarrow \quad u_k\to 0 \mbox{ in }L^p(\R^N) \, \mbox{ for any } p\in(2,2^*).$$
\end{lemma}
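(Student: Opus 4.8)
The plan is to follow the classical Lions-type vanishing argument, adapted to the magnetic setting via the generalized magnetic shifts $g_y$ and the local compactness already set up in Lemma~\ref{lem:localbound} and Corollary~\ref{cor:Frechet}. First I would reduce to the borderline exponent: since $(u_k)$ is bounded in $H^{1,2}_A(\R^N)$, hence (by the diamagnetic inequality and the embeddings recorded in Section~2) bounded in $L^2(\R^N)$ and in $L^{2^*}(\R^N)$, it suffices by interpolation to prove $u_k\to 0$ in $L^p$ for a single $p\in(2,2^*)$; all other exponents in $(2,2^*)$ then follow from H\"older. I will therefore aim to show, say, $\|u_k\|_{L^{p_0}(\R^N)}\to 0$ for one convenient $p_0$ (for instance the one coming from the Gagliardo--Nirenberg/Sobolev inequality on a ball).

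The heart of the argument is a covering-plus-contradiction scheme. Fix a discretization $\Xi$ and the associated $\rho>0$, so that $\{B_\rho(x)\}_{x\in\Xi}$ covers $\R^N$ with uniformly finite multiplicity $M$. On each ball $B_\rho(x)$ one has the local Sobolev-type interpolation inequality
\begin{equation*}
\int_{B_\rho(x)}|v|^{p_0}\dx{x}\le C\left(\int_{B_\rho(x)}\big(|\nabla v|^2+|v|^2\big)\dx{x}\right)^{\!\theta}\left(\int_{B_\rho(x)}|v|^2\dx{x}\right)^{\!1-\theta}
\end{equation*}
for suitable $\theta\in(0,1)$ (the exponent balancing is standard and I would not belabor it), with $C$ independent of $x$ by translation invariance. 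Summing over $x\in\Xi$, using $\sum_x\le M\int_{\R^N}$ for the "gradient-plus-$L^2$" factor (which is controlled by $\|u_k\|_{H^{1,2}_A}$ via \eqref{eq:H1Bound}), and pulling out the supremum of the small factor, one gets
\begin{equation*}
\|u_k\|_{L^{p_0}(\R^N)}^{p_0}\le C'\,\Big(\sup_{x\in\Xi}\int_{B_\rho(x)}|u_k|^2\dx{x}\Big)^{1-\theta}.
\end{equation*}
Hence it remains to prove that $\sup_{x\in\Xi}\int_{B_\rho(x)}|u_k|^2\to 0$.

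Suppose not. Then, after passing to a subsequence, there are $\delta>0$ and points $x_k\in\Xi$ with $\int_{B_\rho(x_k)}|u_k|^2\ge\delta$, i.e. $\int_{B_\rho(0)}|g_{x_k}^{-1}u_k|^2\ge\delta$. By Lemma~\ref{lem:localbound} the sequence $(g_{x_k}^{-1}u_k)$ is bounded in $H^{1,2}_\mathrm{loc}(\R^N)$, so up to a further subsequence it converges weakly in $H^{1,2}_\mathrm{loc}$ and strongly in $L^2_\mathrm{loc}$ (Rellich) to some $w$; the strong $L^2_\mathrm{loc}$ convergence forces $\int_{B_\rho(0)}|w|^2\ge\delta>0$, so $w\neq 0$. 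I then want to realize $(x_k)$ as an admissible shift sequence. If $(x_k)$ has a bounded subsequence it is eventually constant (the points of $\Xi$ are uniformly separated), and $g_{x_k}$ is then a fixed operator, contradicting $u_k\cw 0$ directly (weak convergence of $u_k$ to $0$ would pass through). If $(x_k)$ diverges, then taking $g_k:=g_{x_k}\in\mathcal G_\Xi$, the definition of $u_k\cw 0$ (Definition~\ref{def:G}) gives $g_k^{-1}u_k\rightharpoonup 0$ in $H^{1,2}_\mathrm{loc}$, whence $w=0$ — again a contradiction. This proves $\sup_{x\in\Xi}\int_{B_\rho(x)}|u_k|^2\to 0$, and the lemma follows.

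**Main obstacle.** The genuinely magnetic difficulty, and the step I would be most careful about, is ensuring that the constant in the local interpolation inequality and the boundedness of $(g_{x_k}^{-1}u_k)$ are \emph{uniform in the shift} — here translation is not an isometry of $H^{1,2}_A$ because $A$ is not periodic. This is precisely what the corrected potentials $A_y$ and the bound $|A_y(x)|\le\|B\|_\infty|x-y|$ of Lemma~\ref{lem:phase2} buy us, as already exploited in the proof of Lemma~\ref{lem:localbound}: on a fixed ball $B_\rho(0)$ the potential $A_{x_k}(\cdot+x_k)$ is bounded by $\rho\|B\|_\infty$ uniformly in $k$, so \eqref{eq:H1ABound}--\eqref{eq:H1Bound} give control of $g_{x_k}^{-1}u_k$ in the \emph{ordinary} $H^{1,2}(B_\rho(0))$ norm with constants independent of $k$. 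Everything else is the standard non-magnetic Lions vanishing argument run on the pulled-back sequence.
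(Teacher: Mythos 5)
Your strategy is the right one and matches the paper in spirit (sum a local Gagliardo--Nirenberg/Sobolev inequality over a discretization, pull out a supremum, then use the $\mathcal G_\Xi$-convergence to kill that supremum). But there is one genuine flaw in the way you close the global sum. After summing over $x\in\Xi$ you need the ``large'' factor to be $\sum_{x\in\Xi}\int_{B_\rho(x)}(|\nabla u_k|^2+|u_k|^2)\dx{x}\le M\int_{\R^N}(|\nabla u_k|^2+|u_k|^2)\dx{x}$, and you claim this is controlled by $\|u_k\|_{H^{1,2}_A}$ via \eqref{eq:H1Bound}. That cannot work: \eqref{eq:H1Bound} reads $|\nabla u|^2\le 2|\nabla_A u|^2+14|A|^2|u|^2$, and the term $\int_{\R^N}|A|^2|u_k|^2\dx{x}$ is \emph{not} bounded by $\|u_k\|_{H^{1,2}_A}^2$ in general -- under the lemma's hypothesis $A$ is only $C_\mathrm{loc}$, and even for a bounded magnetic field the potential $A$ can grow linearly, so $|A|^2|u_k|^2$ need not be integrable. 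Indeed, for $u\in H^{1,2}_A(\R^N)$ one does not in general have $\nabla u\in L^2(\R^N)$.

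The fix is exactly the paper's device: run the local Sobolev inequality on the real-valued function $|u_k|$ rather than on $u_k$, so that the gradient factor is $\int_{B_\rho(x)}(|\nabla|u_k||^2+|u_k|^2)\dx{x}$, and then invoke the diamagnetic inequality \eqref{eq:diamag} to bound $|\nabla|u_k||\le|\nabla_A u_k|$ pointwise. After summation this yields $C\|u_k\|_{H^{1,2}_A}^2$ with no dependence on $A$, which is what you want. (This is the only place in the lemma where the diamagnetic inequality is essential; \eqref{eq:H1Bound} is a purely local tool and is used in Lemma~\ref{lem:localbound}, not here.) Apart from that, two cosmetic points: the paper avoids interpolation by taking the small factor to be $\left(\int_{B_\rho(y)}|u_k|^p\dx{x}\right)^{1-2/p}$ rather than an $L^2$-quantity, which gives all $p\in(2,2^*)$ at once; and your case split on whether $(x_k)$ is bounded or divergent is unnecessary, since Definition~\ref{def:G} imposes $g_k^{-1}u_k\rightharpoonup 0$ for \emph{every} sequence $(g_k)\subset\mathcal G_\Xi$, divergent or not, so you can read off the contradiction directly from the extracted $y_k$ as the paper does.
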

\begin{proof}
Let $p\in(2,2^*)$, $\Xi$ be a discretization of $\R^N$, and fix $y\in \Xi$. By combining the Sobolev inequality on an open ball $B_\rho(y)$, with the diamagnetic inequality \eqref{eq:diamag}, we get, with some constant $C>0$ independent of $y$, 
\begin{equation*}
\begin{split}
\int_{B_\rho(y)}|u_k|^p\dx{x} &= \left(\int_{B_\rho(y)}|u_k|^p\dx{x} \right)^{2/p} \left(\int_{B_\rho(y)}|u_k|^p\dx{x}\right)^{1-2/p}\\ 
& \le 
C\int_{B_\rho(y)} (|\nabla |u_k||^2+|u_k|^2)\dx{x}\left(\int_{B_\rho(y)}|u_k|^p\dx{x}\right)^{1-2/p}\\
& \le 
C\int_{B_\rho(y)}(|\nabla_A u_k|^2+|u_k|^2)\dx{x}\left(\int_{B_\rho(y)}|u_k|^p \dx{x}\right)^{1-2/p}\\
& \le 
C\int_{B_\rho(y)}(|\nabla_A u_k|^2+|u_k|^2)\dx{x}\; \sup_{z\in \Xi}\left(\int_{B_\rho(z)}|u_k|^p \dx{x}\right)^{1-2/p}\,.\\
\end{split}
\end{equation*}
Adding over all $y\in\Xi$, and taking into account that the covering $\{B_\rho(y)\}_{y\in\Xi}$ has uniform finite multiplicity, we have, for a suitable choice of $y_k\in\Xi$ (and with a change of variable), that 
\begin{equation*}
\begin{split}
\int_{\R^N}|u_k|^p\dx{x}
& \leq  C\|u_k\|_{H^{1,2}_A(\R^N)}\sup_{y\in\Xi} \left(\int_{B_\rho(y)}|u_k|^p\dx{x}\right)^{1-2/p}\\
& \leq 2C\|u_k\|_{H^{1,2}_A(\R^N)}
\left(\int_{B_\rho(y_k)}|u_k|^p\dx{x}\right)^{1-2/p}\\
& = 2C\|u_k\|_{H^{1,2}_A(\R^N)}
\left(\int_{B_\rho(0)}|u_k(x+y_k)|^p\dx{x}\right)^{1-2/p}\,.
\end{split}
\end{equation*}
Since, by assumption, $(g^{-1}_{y_k}u_k)_{k\in\N}$ converges weakly to zero in
$H^{1,2}(B_\rho(0))$ which is compactly embedded in $L^p(B_\rho(0))$, we get that $u_k(\cdot+y_k)\to 0$ in 
$L^p(B_\rho(0))$ and, by the chain of inequalities above, that $(u_k)_{k\in\N}$ vanishes in $L^p(\R^N)$.
\end{proof}

Note that, despite Corollary \ref{cor:Frechet}, spaces $H_A^{1,2}(\R^N)$ and $\dot H_A^{1,2}(\R^N)$ do not generally coincide, in particular, they are distinct if $B=dA=0$. In general, unless $B$ is identically zero in $\R^N$ and $N=2$, there exists a positive function $W$ such that $E_A(u)\ge\int_{\R^N}|u|^2W(x)\dx{x}$ (see e.g. \cite{Enstedt}), but $W$ may be not necessarily bounded from below. If $dA$ is lattice-periodic, it is easy to show, via partition of unity, that 
$E_A(u)\ge C\int_{\R^N}|u|^2\dx{x}$ so that, if 
$u\in \dot H_A^{1,2}(\R^N)$, then $u\in L^2(\R^N)$, i.e.\ $H_A^{1,2}(\R^N)=\dot H_A^{1,2}(\R^N)$.
For sequences bounded in $\dot H_A^{1,2}(\R^N)$, but not bounded in $L^2(\R^N)$,
one can prove an analog of Lemma~\ref{lem:vanish}.
%\end{remark}

\begin{lemma}\label{lem:vanish2}
Assume that $A\in C_{\mathrm{loc}}(\R^N,\Lambda_1)$ and that for some $r>0$, $B=dA\not \equiv 0$ on any ball $B_r(x)$, $x\in\R^N$ (so that there is no sequence of balls of radius going to infinity where $dA\equiv 0$). Then, for any $p\in(2,2^*)$, there is a positive continuous bounded function $W_p$ defined on $\R^N$ such that 
\begin{equation*}
\label{eq:weak2strong}
u_k\cw 0\mbox{ in } \dot H_\mathrm{loc}^{1,2}(\R^N) \quad \Rightarrow \quad u_k\to 0 \mbox{ in } L^p(\R^N,W_p\dx{x}).
\end{equation*}
\end{lemma}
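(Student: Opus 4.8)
The plan is to adapt the proof of Lemma~\ref{lem:vanish} to the setting where the sequence is bounded only in $\dot H^{1,2}_A(\R^N)$, compensating for the missing global $L^2$-control by a weight $W_p$ that decays fast enough at infinity. First I would observe that, since $B=dA\not\equiv 0$ on every ball of radius $r$, for each $y\in\Xi$ (a fixed discretization) the quantity
\[
w(y)\eqdef\inf\left\{E_{A}(u):u\in C_0^\infty(B_\rho(y)),\ \|u\|_{L^p(B_\rho(y))}=1\right\}
\]
is strictly positive; this is a magnetic Poincaré--Sobolev inequality on the ball $B_\rho(y)$ with no zeroth-order term, whose positivity follows because $E_A$ cannot vanish on a nonzero function supported in a ball where $dA\not\equiv0$ (a standard fact, see the reference to \cite{Enstedt} already invoked in the text). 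Concretely, on $B_\rho(y)$ one has $\int_{B_\rho(y)}|u|^p\le C\,w(y)^{-1}\bigl(\int_{B_\rho(y)}|u|^p\bigr)^{1-2/p}E_A(u)$ for a constant $C$ depending only on $\rho,p,N$ (the same Sobolev step as in Lemma~\ref{lem:vanish}, but keeping the $L^2$-term absorbed into $E_A$ via $w(y)$ rather than adding it back).

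Next I would define the weight. Since $A\in C_{\mathrm{loc}}$ and $B=dA$ need not be bounded, $w(y)$ could in principle be small or large; to guarantee a \emph{bounded, positive, continuous} $W_p$, set $W_p(x)\eqdef c\,\min\{1,w(\pi(x))\}\,\theta(x)$ where $\pi(x)$ selects a nearby point of $\Xi$, $\theta$ is a smooth localization making $W_p$ continuous, and $c$ is chosen so $W_p\le1$; more simply, one may fix any positive continuous $W_p\le 1$ with $W_p(x)\le \min\{1,w(y)\}$ on each $B_\rho(y)$. Then the displayed ball estimate reads
\[
\int_{B_\rho(y)}|u_k|^pW_p\,\dx{x}\le C\,E_A(u_k;B_\rho(y))\Bigl(\sup_{z\in\Xi}\int_{B_\rho(z)}|u_k|^p\,\dx{x}\Bigr)^{1-2/p},
\]
and summing over $y\in\Xi$ using finite multiplicity of $\{B_\rho(y)\}$ gives
\[
\int_{\R^N}|u_k|^pW_p\,\dx{x}\le C'\,E_A(u_k)\Bigl(\sup_{z\in\Xi}\int_{B_\rho(z)}|u_k|^p\,\dx{x}\Bigr)^{1-2/p}.
\]

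The proof then closes exactly as in Lemma~\ref{lem:vanish}: choose $y_k\in\Xi$ nearly attaining the supremum, apply Lemma~\ref{lem:localbound} (or its $\dot H^{1,2}$-analogue) to see $(g^{-1}_{y_k}u_k)$ is bounded in $H^{1,2}_{\mathrm{loc}}$, use the hypothesis $u_k\cw 0$ to get $g^{-1}_{y_k}u_k\rightharpoonup 0$ in $H^{1,2}(B_\rho(0))$, hence strongly in $L^p(B_\rho(0))$ by Rellich, so the supremum term tends to $0$; since $E_A(u_k)$ is bounded, the left side tends to $0$. The main obstacle I anticipate is \textbf{constructing $W_p$ so that it is simultaneously bounded, strictly positive, continuous on all of $\R^N$, and dominated by the local Poincaré constants $w(y)$ whose behavior at infinity is not controlled} — this requires the careful gluing via a partition of unity subordinate to $\{B_\rho(y)\}$ and the observation that one only ever needs $W_p\le w(y)$ \emph{locally}, not a global lower bound; the positivity of each $w(y)$ (the magnetic Hardy/Poincaré inequality on a single ball) is the key structural input and is exactly where the hypothesis "$dA\not\equiv0$ on every ball of radius $r$'' is used.
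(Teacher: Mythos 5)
Your overall strategy parallels the paper's — a local Sobolev inequality with a $y$-dependent constant, a weight $W_p$ built from those constants, summation over the discretization with finite multiplicity, and closure via local Rellich compactness and the $\cw$ hypothesis. However, there is a genuine gap in the local step. You define
\[
w(y)\eqdef\inf\left\{E_A(u):u\in C_0^\infty(B_\rho(y)),\ \|u\|_{L^p(B_\rho(y))}=1\right\},
\]
with the infimum over functions \emph{compactly supported} in the ball, and then apply the resulting estimate to $u_k$. But $u_k|_{B_\rho(y)}$ is not compactly supported in $B_\rho(y)$, so $w(y)$ says nothing about it. The inequality you would actually need, namely $\|u\|_{L^p(B_\rho(y))}^2\le C(y)\int_{B_\rho(y)}|\nabla_A u|^2\dx{x}$ for all $u\in H^{1,2}(B_\rho(y))$ with no boundary condition, is false: on the simply connected set $B_\rho(y)$ one can find a real scalar $\Phi_y$ with $\nabla\Phi_y=-A$, and then $u=e^{\i\Phi_y}$ satisfies $\nabla_A u\equiv 0$ while $\|u\|_{L^p(B_\rho(y))}\neq 0$. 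A related warning sign is that the nondegeneracy hypothesis $dA\not\equiv 0$ plays no role at all in the positivity of your $w(y)$: for $u\in C_0^\infty(B_\rho(y))$ and $N\ge 3$, the diamagnetic inequality together with the critical Sobolev embedding already gives $E_A(u)\ge C\|u\|_{L^{2^*}}^2\ge C(\rho)\|u\|_{L^p(B_\rho(y))}^2$, \emph{uniformly} in $y$ and with no reference to $B$. An argument that never uses the standing hypothesis on $dA$ is not proving the stated lemma.

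The paper's route is different precisely here: it enlarges the balls to radius $R=\max(2\rho,r)$ and invokes a Sobolev-type inequality on $B_R(y)$ for functions \emph{not} vanishing on the boundary, with a constant that depends on $y$ and whose finiteness is what uses $dA\not\equiv 0$ on $B_r(y)$; the constants accumulated over $\Xi$ yield a piecewise-constant weight, smoothed to a continuous $W_p$. To repair your proof you would need to replace $w(y)$ by the constant in a local inequality valid without boundary conditions — e.g.\ a magnetic Poincar\'e--Wirtinger-type inequality on $B_R(y)$ that mods out the one-dimensional kernel spanned by $e^{\i\Phi_y}$, together with the observation that the corresponding projection of $g^{-1}_{y_k}u_k$ vanishes by the weak convergence to $0$. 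That is a genuinely different, and more delicate, local ingredient than the compact-support Poincar\'e constant you propose. Your remarks on gluing the local constants into a continuous bounded $W_p$ (partition of unity, truncation by $\min\{1,\cdot\}$) are fine and match the paper's closing remark.
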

\begin{proof}
The proof follows the same argument used for Lemma~\ref{lem:vanish} with the following elementary modifications. The radius of the balls $B_\rho(y)$, $y\in\Xi$, have to be changed to 
$R=\max(2\rho, r)$, so that Sobolev inequality 
on the ball $B_R(y)$, $y\in \Xi$ (with a constant \emph{dependent} on $y$) will follow from the assumption $dA\not\equiv 0$ on $B_r(y)$. This constant, after summation over $\Xi$ (recall that the covering by $B_R(y)$ remains of finite multiplicity) will produce a piecewise-constant weight in the $L^p$-norm, which can be replaced by a continuous function $W_p$.
\end{proof}
\section{Profile decomposition}\label{PD}
%\begin{lemma}
%There exists two constants $0<C_1<C_2$ such that for any $g\in\mathcal{G}$ and any $u\in \dot H^{1,2}(\R^N)$,
%\begin{equation}
% $C_1\|\nabla |u|\|_2^2 E_A\circ g(u)\le C_2\|\nabla u\|^2+|u|_2^2$.
%\end{equation}
%\end{lemma}
%\begin{proof}
%The left hand side of the inequality is immediate from the diamagnetic inequality. 
%\end{proof}
The purpose of this section is to prove the profile decomposition stated in Theorem \ref{thm:MagneticPD} below with the related energy bounds stated in Theorem \ref{thm:energies}.

%We will use the notation:
%$\N_0\eqdef \N\setminus\{0\}$, and, for simplicity,
When we shall deal with sequences 
$Y^{(n)}\eqdef (y_k^{(n)})_{k\in\N}$ in $\R^N$ depending on a parameter $n\in \N$, we will abbreviate the notation for the magnetic potentials at infinity $A^{(Y^{(n)})}_\infty$ as $A^{(n)}_\infty$ (and the corresponding magnetic field as $B^{(n)}_\infty$).
Moreover, for any $n$ and for any $k$, $g_{y^{(n)}_k}$ will represent the magnetic shift defined by \eqref{eq:g} with $y=y^{(n)}_k$, and we will use the abbreviated notation
\begin{equation}\label{eq:short}
g^{(n)}_k\eqdef g_{y^{(n)}_k}\quad \mbox{ as well as }
\quad A^{(n)}_k \eqdef A_{y^{(n)}_k}(\cdot +y^{(n)}_k)\,.
\end{equation}
We warn the reader that we shall always reserve the index $n=0$ to the trivial sequence $Y^{(0)}\eqdef (y_k^{(0)})_{k\in\N}=(0)_{k\in\N}$ (which, of course is not diverging) and always set  
$A^{(0)}_k=A^{(0)}_\infty=A$ (indeed, the magnetic shifts
$g^{(0)}_k=g_{y_k^{(0)}}$ coincide with the identity map (see Remark \ref{rem:y=0})). Moreover, when $(u_k)_{k\in\N}$ is bounded, we set (modulo subsequences)
\begin{equation}
\label{eq:wlimit}
v^{(0)}\eqdef \wlim_{k\to \infty} u_k.
\end{equation}
In what follows we use weak convergence in $H^{1,2}_{\mathrm{loc}}(\R^N)$, which is a weak convergence in a Fr\'echet space. In particular weakly converging sequences will converge in the sense of distributions, strongly in $L^p(\Omega)$ whenever $\Omega\subset \R^N$ is a bounded measurable set and $p\in(2,2^*)$, as well as almost everywhere. 
\begin{theorem}
	\label{thm:MagneticPD} Let  $\Xi\ni 0$ be a discretization of $\R^N$.
	Let $A\in \dot C^1(\R^N,\Lambda_1)$, $N\ge 3$,
	and let	$(u_{k})_{k\in\N}$ be a bounded sequence in $H_A^{1,2}(\R^N)$.
	Then, there exist 
	exist $v^{(n)}  \in
	H^{1,2}_\mathrm{loc}(\R^N)$ (with $v^{(0)}\in H_A^{1,2}(\R^N)$), $Y^{(n)}:=(y_{k}^{(n)})_{k\in\N}\subset \Xi$, $n\in \N$,
	%$y_{k} ^{(0)} =0$, 
	%with $n\in\N_0$,
	such that, on a renamed subsequence, 
\begin{eqnarray}
\label{mag-shifts}  
	%\exp({i\varphi_{y_k^{(n)}}(\cdot+y_k^{(n)})})u_k(\cdot+y_{k} ^{(n)})\rightharpoonup v^{(n)},
&& (g^{(n)}_k)^{-1} u_k \rightharpoonup v^{(n)}\quad \mbox{ in } H^{1,2}_{\mathrm{loc}}(\R^N),\\
&& \label{separates}
|y_{k} ^{(n)}-y_{k} ^{(m)}|\to\infty  \mbox{ for } n \neq m,\\
&& \label{BBasymptotics-mag}
	r_k\eqdef u_{k} - \sum_{n=0}^\infty
		g^{(n)}_kv^{(n)} \to 0\, \mbox{ in }
		L^p(\R^N), \,\forall p\in (2,2^*),
			\end{eqnarray}
and the series $\sum_{n=0}^\infty
		|v^{(n)}(\cdot-y_{k}^{(n)})|$ and the series in (\ref{BBasymptotics-mag}) converge
	 unconditionally (with respect to $n$) and uniformly in $k$, in 
$H^{1,2}(\R^N)$ and in $H^{1,2}_\mathrm{_{loc}}(\R^N)$ respectively.
Moreover, for any $n$,
\begin{equation}\label{eq:gsaturation}
g^{-1}_{y_k}u_k\rightharpoonup 0  \mbox{ in } H^{1,2}_\mathrm{_{loc}}(\R^N)\quad \mbox{ for all } \; (y_k)_{k\in\N} \mbox{ in } \R^N\mbox{ s.t. }|y_k-y^{(n)}_k|\to+\infty.
\end{equation}
\end{theorem}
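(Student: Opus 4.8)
The plan is to build the profiles one at a time by an iterative extraction scheme, exactly as in the classical (non-magnetic) profile decomposition of Solimini--Tintarev type, but using the generalized magnetic shifts of Definition~\ref{def:g} in place of Euclidean translations, and the vanishing Lemma~\ref{lem:vanish} as the termination criterion. First I would set $w_k^{(0)} \eqdef u_k$ and $v^{(0)} \eqdef \wlim u_k$ (the weak limit in $H^{1,2}_A(\R^N)$, which exists up to subsequence by boundedness); note $v^{(0)} \in H^{1,2}_A(\R^N)$. Then, inductively, having produced profiles $v^{(0)},\dots,v^{(n-1)}$ with associated diverging sequences $Y^{(1)},\dots,Y^{(n-1)}$ in $\Xi$, I would consider the remainder $w_k^{(n)} \eqdef u_k - \sum_{m=0}^{n-1} g_k^{(m)} v^{(m)}$. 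If $w_k^{(n)} \cw 0$, the extraction stops with finitely many profiles; otherwise, by negation of $\cw$-convergence, there is a sequence $(g_k)_{k\in\N} \subset \mathcal{G}_\Xi$, say $g_k = g_{y_k}$ with $y_k \in \Xi$, along which $g_k^{-1} w_k^{(n)}$ does not converge weakly to $0$ in $H^{1,2}_{\mathrm{loc}}(\R^N)$; passing to a subsequence and using Lemma~\ref{lem:localbound} (the renormalized sequence is bounded in $H^{1,2}_{\mathrm{loc}}$, using $A \in \dot C^1$ so that Lemma~\ref{lem:phase2} gives the linear bound on $A_{y_k}$), one extracts a nonzero weak limit $v^{(n)} \eqdef \wlim (g_k^{(n)})^{-1} w_k^{(n)}$ with $Y^{(n)} \eqdef (y_k)_{k\in\N}$. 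One must check $Y^{(n)}$ diverges and $|y_k^{(n)} - y_k^{(m)}| \to \infty$ for $m < n$; the divergence follows because if $y_k^{(n)}$ stayed bounded along a subsequence it would (after passing to a constant value, since $\Xi$ is discrete) force the weak limit of $g_k^{-1} w_k^{(n)}$ to coincide with a shift of $\wlim w_k^{(n)}$, which is $0$ by construction of the remainder; asymptotic orthogonality with earlier sequences follows similarly from $(g_k^{(m)})^{-1} w_k^{(n)} \rightharpoonup 0$.

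The key quantitative input is an energy/norm decoupling: I would establish, in the spirit of the Brezis--Lieb lemma adapted to the magnetic setting, that the $H^{1,2}_A$-norms (or at least a control sufficient for the iteration) satisfy an asymptotic Pythagorean inequality, so that $\sum_n \|v^{(n)}\|^2$ is bounded by $\limsup_k \|u_k\|_A^2$. Concretely, using \eqref{eq:EAphasing} to transfer $E_A(g_k^{(n)} v^{(n)})$ to $E_{A_k^{(n)}}(v^{(n)})$ and the local convergence $A_{y_k^{(n)}}(\cdot + y_k^{(n)}) \to A_\infty^{(n)}$ from Remark~\ref{rem:Lipschitz}, together with the asymptotic disjointness of supports guaranteed by \eqref{separates}, one shows that the partial sums $\sum_{m=0}^{n-1} g_k^{(m)} v^{(m)}$ are asymptotically orthogonal to $w_k^{(n)}$ in the relevant inner products. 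This yields $\sum_{n} E_{A_\infty^{(n)}}(v^{(n)}) \le \liminf_k E_A(u_k) < \infty$ (and a matching $L^2$ bound), forcing $\|v^{(n)}\| \to 0$; since each $v^{(n)}$ extracted in the non-terminating step has norm bounded below by the concentration amount, the process produces at most countably many nontrivial profiles whose sizes tend to zero. The unconditional and uniform-in-$k$ convergence of $\sum_n |v^{(n)}(\cdot - y_k^{(n)})|$ in $H^{1,2}(\R^N)$ and of the tail $\sum_{n \ge N} g_k^{(n)} v^{(n)}$ in $H^{1,2}_{\mathrm{loc}}$ then follows from the square-summability of the norms combined with the asymptotic disjointness of supports (the diamagnetic inequality \eqref{eq:diamag} lets one compare $|g_k^{(n)} v^{(n)}|$ with $|v^{(n)}(\cdot - y_k^{(n)})|$ pointwise).

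To obtain \eqref{BBasymptotics-mag}, I would argue by contradiction: if $r_k \eqdef u_k - \sum_{n=0}^\infty g_k^{(n)} v^{(n)}$ does not vanish in some $L^p$, $p \in (2,2^*)$, then by Lemma~\ref{lem:vanish} it is not $\mathcal{G}_\Xi$-infinitesimal, so there is $(h_k) \subset \mathcal{G}_\Xi$ with $h_k^{-1} r_k \rightharpoonup w \ne 0$ in $H^{1,2}_{\mathrm{loc}}$. One then shows this $w$ would have to be a genuinely new profile, i.e.\ $h_k = g_{z_k}$ with $(z_k)$ diverging and $|z_k - y_k^{(n)}| \to \infty$ for every $n$ (again because otherwise $w$ is absorbed into an existing $v^{(n)}$, using the local convergence of the renormalized remainders $h_k^{-1} w_k^{(N)} \rightharpoonup 0$ and the uniform smallness of the tail for large $N$). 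But a new profile of positive size contradicts $\|v^{(n)}\| \to 0$ once $N$ is chosen large enough that the tail energy is below $\|w\|$; hence $r_k \to 0$ in every such $L^p$. Finally, \eqref{eq:gsaturation} is essentially the statement that no further profile is hidden along sequences diverging from all the $y_k^{(n)}$: given $(y_k)$ with $|y_k - y_k^{(n)}| \to \infty$ for all $n$, one writes $g_{y_k}^{-1} u_k = g_{y_k}^{-1} r_k + \sum_n g_{y_k}^{-1} g_k^{(n)} v^{(n)}$; the remainder term tends to $0$ in $L^p_{\mathrm{loc}}$ hence (being bounded in $H^{1,2}_{\mathrm{loc}}$ by Lemma~\ref{lem:localbound}) has only $0$ as weak $H^{1,2}_{\mathrm{loc}}$-limit, while each term $g_{y_k}^{-1} g_k^{(n)} v^{(n)}$ is a magnetic shift of $v^{(n)}$ by $y_k^{(n)} - y_k \to \infty$, hence converges weakly to $0$ locally, and the tail is uniformly small; summing gives $g_{y_k}^{-1} u_k \rightharpoonup 0$.

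The main obstacle I anticipate is the bookkeeping of the \emph{non-isometric} nature of the shifts: unlike the lattice-periodic case \eqref{eq:latticescalar}, here $g_y$ is only an isometry between $(\dot H^{1,2}_A, \|\cdot\|_A)$ and $(\dot H^{1,2}_{A_y(\cdot+y)}, \|\cdot\|_{A_y(\cdot+y)})$, and the potentials $A_k^{(n)} = A_{y_k^{(n)}}(\cdot + y_k^{(n)})$ vary with $k$ (converging only locally to $A_\infty^{(n)}$). So every orthogonality/decoupling estimate has to be done with $k$-dependent potentials and the convergence $A_k^{(n)} \to A_\infty^{(n)}$ invoked carefully, on bounded sets, which is where $A \in \dot C^1$ and Lemma~\ref{lem:phase2} (uniform linear bound) and Remark~\ref{rem:Lipschitz} (Arzel\`a--Ascoli) do the heavy lifting; one also needs that the error from replacing $A_k^{(n)}$ by $A_\infty^{(n)}$ on a large ball and then letting the ball exhaust $\R^N$ is controlled, which again uses Lemma~\ref{lem:localmag} to compare the magnetic and non-magnetic local norms uniformly. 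The other delicate point is making the ``at most countably many profiles, sizes $\to 0$'' argument rigorous with the weakest norm available (only $\dot H^{1,2}_A$ controls the energy, but the $L^2$-part needs the separate bound), and ensuring the diagonal subsequence extraction is consistent across all $n$ simultaneously.
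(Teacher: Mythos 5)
Your plan is a legitimately different route from the paper's, and in outline it could be made to work, but it misses the paper's central shortcut. The paper does not run an iterative concentration-extraction scheme in the magnetic setting at all: it first applies the diamagnetic inequality \eqref{eq:diamag} to deduce that $(|u_k|)_{k\in\N}$ is bounded in the ordinary $H^{1,2}(\R^N)$, invokes the \emph{non-magnetic} profile decomposition Proposition~\ref{acc-shifts} (with Remark~\ref{rem:acc-shifts}) on $(|u_k|)_{k\in\N}$ to obtain, wholesale, the sequences $(y_k^{(n)})_{k\in\N}\subset\Xi$, the separation \eqref{separates}, the saturation \eqref{eq:saturation}, and the unconditional/uniform convergence of the series. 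Lemma~\ref{lem:onebubble} then supplies the magnetic profiles $v^{(n)}=\wlim (g_k^{(n)})^{-1}u_k$, and one checks $|v^{(n)}|=w^{(n)}$. The only genuinely new piece of work in the paper's proof of \eqref{BBasymptotics-mag} is the two-case argument showing $r_k\cwXi 0$, after which Lemma~\ref{lem:vanish} finishes. Thus the hard analytic content — Pythagorean/Brezis--Lieb decoupling, square-summability, termination, unconditional convergence — is \emph{inherited} from the scalar case, not re-derived for the non-isometric magnetic shifts.

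Your plan instead rebuilds the whole Solimini-type induction in the magnetic space, which is exactly where the non-isometry bites. You correctly flag this as the main obstacle, but I would push back harder on the termination step: the bound $\sum_n E_{A_\infty^{(n)}}(v^{(n)}) \le \liminf_k E_A(u_k)$ is precisely \eqref{norms-mag} of Theorem~\ref{thm:energies}, which the paper proves \emph{after} and \emph{using} Theorem~\ref{thm:MagneticPD}. Your scheme needs a version of this bound \emph{during} the induction to force $\|v^{(n)}\|\to 0$ and make the tail small; you can get it from \eqref{separates} and Lemma~\ref{lem:onebubble} alone (as the paper's proof of Theorem~\ref{thm:energies} shows), but you also need a maximality/size-ordering device in the extraction step (extract a near-maximal concentration at each stage) to rule out picking vanishingly small bubbles forever — you do not mention this, and without it the ``sizes tend to zero'' claim does not follow from square-summability alone. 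Finally, a cosmetic point: the unconditional/uniform convergence of $\sum_n |v^{(n)}(\cdot - y_k^{(n)})|$ in $H^{1,2}(\R^N)$ is literally \eqref{BBasymptotics-shifts} for $(|u_k|)$ once you note $|g_k^{(n)}v^{(n)}| = |v^{(n)}(\cdot - y_k^{(n)})|$ (an equality, not an application of the diamagnetic inequality), so the paper gets it for free; you would have to re-prove it. In short: your route is more laborious but not wrong in spirit; the paper's reduction via $|u_k|$ is the idea you missed, and it buys you essentially all of the structural conclusions at once.
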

Note that, since $(y^{(0)}_k)_{k\in\N}=(0)_{k\in\N}$, formula \eqref{separates} implies that each 
sequence $(y^{(n)}_k)_{k\in\N}=(0)_{k\in\N}$ is diverging for every $n\neq 0$, while \eqref{BBasymptotics-mag} gives the profile decomposition
\begin{equation}
\label{eq:PD}
u_{k}=v^{(0)}+\sum_{n=1}^\infty
		g^{(n)}_kv^{(n)} +r_k \quad \mbox{ with } r_k\to 0\, \mbox{ in }
		L^p(\R^N), \,\forall p\in (2,2^*).
\end{equation}

\begin{theorem}\label{thm:energies} Assume conditions of Theorem~\ref{thm:MagneticPD}. A subsequence $(u_{k})_{k\in\N}$ in $H_A^{1,2}(\R^N)$ provided by Theorem~\ref{thm:MagneticPD} satisfies
	\begin{equation}\label{eq:IBL}
	\int_{\R^N}|u_k|^p\dx{x}\longrightarrow \sum_{n=0}^\infty\int_{\R^N}|v^{(n)}|^p\dx{x}\quad \mbox{ as } k\to +\infty\;\forall p\in[2,2^*),
	\end{equation}
		\begin{equation}\label{eq:IBL2}
 \sum_{n=0}^\infty\int_{\R^N}|v^{(n)}|^2\dx{x}\le \liminf_{k\to\infty} \int_{\R^N}|u_k|^2\dx{x},
	\end{equation}
and	furthermore, 
	\begin{equation} 
	\label{norms-mag} 
	\sum_{n =0}^\infty E_{A^{(n)}_\infty}(v ^{(n)})\le \liminf_{k\to \infty} E_A(u_k),
	\end{equation}
where $A^{(0)}_\infty=A$, and for $n\neq 0$, each $A^{(n)}_\infty$ is the magnetic potential at infinity relative to the sequence $Y^{(n)}=(y_k^{(n)})_{k\in\N}$ as in Definition~\ref{def:AY} and $B^{(n)}_\infty=d A^{(n)}_\infty$ is the related magnetic field.
\end{theorem}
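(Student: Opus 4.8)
The plan is to derive Theorem~\ref{thm:energies} from the profile decomposition of Theorem~\ref{thm:MagneticPD} by a standard but careful Brezis--Lieb / lower-semicontinuity argument, exploiting the asymptotic orthogonality encoded in \eqref{separates} and \eqref{BBasymptotics-mag}. First I would establish \eqref{eq:IBL}. The key inputs are: (i) the remainder $r_k$ vanishes in every $L^p(\R^N)$, $p\in(2,2^*)$, so for those $p$ the $L^p$-norm of $u_k$ is asymptotically that of the partial sums $\sum_{n=0}^{m} g^{(n)}_k v^{(n)}$ plus an error that is uniformly small in $k$ once $m$ is large (by the uniform-in-$k$ unconditional convergence of the series); (ii) for fixed $m$, the functions $g^{(n)}_k v^{(n)}$, $n=0,\dots,m$, have asymptotically disjoint supports because $|y_k^{(n)}-y_k^{(m')}|\to\infty$, and $|g^{(n)}_k v^{(n)}| = |v^{(n)}(\cdot - y_k^{(n)})|$ has the same $L^p$-norm as $v^{(n)}$ (the phase $e^{\i\varphi_{y_k^{(n)}}}$ has modulus one and translation is an $L^p$-isometry). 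Hence $\int|\sum_{n=0}^m g^{(n)}_k v^{(n)}|^p \to \sum_{n=0}^m \int |v^{(n)}|^p$ as $k\to\infty$ for each fixed $m$; combining with (i) and letting $m\to\infty$ yields \eqref{eq:IBL} for $p\in(2,2^*)$. The endpoint case $p=2$ follows separately — see below — and is where the argument must be more delicate since the $L^2$-norm is not controlled by the $H^{1,2}_A$-norm uniformly and $r_k$ need not vanish in $L^2$.

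Next, for \eqref{eq:IBL2}, I would use the weak convergence $(g^{(n)}_k)^{-1}u_k \rightharpoonup v^{(n)}$ in $L^2_{\mathrm{loc}}$: for any $R>0$ and any finite set of indices $n_1,\dots,n_m$, the (eventually disjointly supported) translates give
\[
\liminf_k \int_{\R^N}|u_k|^2 \ge \liminf_k \sum_{j=1}^m \int_{B_R(y_k^{(n_j)})}|u_k|^2 \ge \sum_{j=1}^m \int_{B_R(0)}|v^{(n_j)}|^2
\]
by weak lower semicontinuity of the $L^2(B_R(0))$-norm under $(g^{(n_j)}_k)^{-1}u_k\rightharpoonup v^{(n_j)}$ (the phase again is irrelevant to the modulus, but one must be slightly careful since weak $L^2_{\mathrm{loc}}$ convergence of $(g^{(n)}_k)^{-1}u_k$ is what is asserted — lower semicontinuity of $\|\cdot\|_{L^2(B_R)}$ applies). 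Letting $R\to\infty$ and then $m\to\infty$ gives \eqref{eq:IBL2}; this also supplies the missing half of \eqref{eq:IBL} at $p=2$ when combined with the fact that $u_k$ is $L^2$-bounded, although strictly \eqref{eq:IBL} at $p=2$ may need $r_k$-control or may simply be read as the $p\nearrow 2$ limit — I would state \eqref{eq:IBL} for $p\in(2,2^*)$ cleanly and handle $p=2$ as the Fatou-type bound \eqref{eq:IBL2}. (If \eqref{eq:IBL} is genuinely claimed as an equality at $p=2$, one needs additionally that no $L^2$ mass escapes into $r_k$, which is not generally true, so I expect the intended reading is the inequality; I'd flag this.)

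For the energy inequality \eqref{norms-mag}, the scheme is the same with $E_A$ in place of the $L^2$-norm, but now the crucial point is the \emph{magnetic} bookkeeping. By the commutation/gauge identities \eqref{eq:commutation}–\eqref{eq:EAphasing} we have $E_A(u)$ restricted to $B_R(y_k^{(n)})$ equals $E_{A^{(n)}_k}((g^{(n)}_k)^{-1}u_k)$ restricted to $B_R(0)$, where $A^{(n)}_k = A_{y_k^{(n)}}(\cdot+y_k^{(n)}) \to A^{(n)}_\infty$ uniformly on $B_R(0)$ by Remark~\ref{rem:Lipschitz}. Since $A^{(n)}_k \to A^{(n)}_\infty$ uniformly on the bounded set $B_R(0)$ and $(g^{(n)}_k)^{-1}u_k \rightharpoonup v^{(n)}$ weakly in $H^{1,2}(B_R(0))$ (weak $H^1$ by Lemma~\ref{lem:localbound} plus the asserted weak convergence), the functional $w\mapsto \int_{B_R(0)}|\nabla w + \i A^{(n)}_k w|^2$ is, in the limit, weakly lower semicontinuous and converges to $\int_{B_R(0)}|\nabla_{A^{(n)}_\infty}v^{(n)}|^2$ along the minimizing sequence — concretely, write $\nabla_{A^{(n)}_k}(g^{(n)}_k)^{-1}u_k = \nabla (g^{(n)}_k)^{-1}u_k + \i A^{(n)}_k (g^{(n)}_k)^{-1}u_k$; the gradient part is weakly lsc, the zeroth-order part converges strongly in $L^2(B_R(0))$ (compact Sobolev embedding plus uniform convergence of $A^{(n)}_k$), and the cross term passes to the limit. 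Summing over finitely many $n$ using the asymptotic disjointness of the balls $B_R(y_k^{(n)})$, then $R\to\infty$, then the number of profiles $\to\infty$, yields \eqref{norms-mag}. The main obstacle I anticipate is precisely this last step: making the ``finitely many disjoint balls'' truncation interact correctly with the infinite series and the $R\to\infty$ limit, i.e.\ justifying that the energy localized outside all the balls $B_R(y_k^{(n)})$, $n\le m$, is nonnegative (trivially true, since $E_A\ge 0$ on any set) but that the sum of the localized energies genuinely converges to the total — this is fine for a lower bound, which is all \eqref{norms-mag} asks, so the inequality direction is safe; the subtlety is only ensuring the $A^{(n)}_k\to A^{(n)}_\infty$ convergence is uniform on each fixed $B_R(0)$ (guaranteed by $A\in\dot C^1$ via Arzelà–Ascoli, Remark~\ref{rem:Lipschitz}) and that the weak $H^{1}_{\mathrm{loc}}$ convergence is not disturbed by the unbounded, linearly-growing ambient potential $A$ — which is exactly why the whole construction passes to the gauge-corrected potentials $A^{(n)}_k$ that are \emph{bounded} on $B_R(0)$ uniformly in $k$ by Lemma~\ref{lem:phase2}.
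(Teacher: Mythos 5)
Your argument matches the paper's: \eqref{eq:IBL} is obtained by the iterated Brezis--Lieb identity for the profile decomposition of $(|u_k|)$ in $H^{1,2}(\R^N)$ (the paper cites Cwikel--Tintarev for this, while you spell out the disjoint-supports bookkeeping), and both \eqref{eq:IBL2} and \eqref{norms-mag} follow exactly as you describe — localize on eventually pairwise disjoint balls $B_R(y_k^{(n)})$, use the gauge identity to rewrite the local energy as $\int_{B_R(0)}|\nabla_{A_k^{(n)}}(g_k^{(n)})^{-1}u_k|^2\dx{x}$, pass to the limit via the uniform convergence $A_k^{(n)}\to A_\infty^{(n)}$ on $B_R(0)$ and weak lower semicontinuity, and then send $R\to\infty$ and the number of retained profiles to infinity. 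Your flag about $p=2$ in \eqref{eq:IBL} is well taken: since $r_k$ is only shown to vanish in $L^p$ for $p\in(2,2^*)$, the honest conclusion at the endpoint is the Fatou-type inequality \eqref{eq:IBL2}, which is why the paper records it separately.
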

%\begin{remark} 
%	From \eqref{norms-mag} we have $v^{(n)}\in H_{A_\infty^{(n)}}^{1,2}(\R^N)$. 
%	Note also that if we assume only that $A\in C^{1}(\R^N)$, one can follow the arguments leading to Theorems~\ref{thm:MagneticPD} and \ref{thm:energies}  to arrive to the same conclusions with $A^{(n)}_\infty\in C^{0,1}(\R^N)$ (i.e. of Lipschitz regularity) and $B^{(n)}=dA^{(n)}\in L^\infty(\R^N)$, with the differentiation understood in the sense of distributions and almost everywhere. 
%\end{remark}
%\begin{remark}\label{rem:fortuna}
%Note that, when \eqref{eq:fortuna_mala} holds true then, due to \eqref{eq:Aineq}
%\begin{equation}\label{eq:boundA}
%\begin{split}
%\int_{\R^N} |A_y(x-y) v^{(n)}(x)|^2\dx{x} & \leq
%C^2\|B\|_\infty^2 \int_{\R^N} |x-y-y|^2 |v^{(n)}(x)|^2\dx{x}\\
%& = \mbox{ y doesn't cancel !!!}\\
%& =C^2\|B\|_\infty^2 \int_{\R^N} |x|^2 |v^{(n)}(x)|^2\dx{x},\\
%\end{split}
%\end{equation}
%which is a constant independent on $y$. So we can deduce, as in Remark \ref{rem:hoelder}, that \eqref{eq:EAphasinginfty} holds true.
%
%On the other hand, if \eqref{eq:fortuna_malissima} holds true, then, by Remark \ref{rem:uniform},
%the sequence $\left(A^{(n)}_k(\cdot -y^{(n)}_k)\right)_{k\in\N}$ converges (uniformly on $\R^N$) to a bounded continuous function $A^{(n)}_\infty \,: \R^N \to \Lambda_1$.
%Therefore, by \eqref{eq:g} and \eqref{eq:EAphasing},
%\begin{equation}
%\label{eq:Einfty}
%E_A(g^{(n)}_k v^{(n)})=E_{A^{(n)}_k(\cdot - y^{(n)}_k)}(v^{(n)})\to E_{A^{(n)}_\infty}(v^{(n)})\quad \mbox{ as } k\to\infty.
%\end{equation}
%\end{remark}
Before giving the proof of Theorem~\ref{thm:MagneticPD} we like to show that relation \eqref{mag-shifts} is at all possible.

\begin{lemma}\label{lem:onebubble}
Let $A\in \dot C^{1}(\R^N,\Lambda_1)$, $N\ge 3$,
let $(u_k)_{k\in\N}$ be a bounded sequence in $\dot H^{1,2}_A(\R^N)$, and let $Y=(y_k)_{k\in\N}$ be a sequence in $\R^N$.
Then, $(g^{-1}_{y_k}u_k)_{k\in\N}$ has a subsequence weakly convergent in $H^{1,2}_\mathrm{loc}(\R^N)$ to some $v$ in $H^{1,2}_\mathrm{loc}(\R^N)$, such that, $(A_{y_k}(\cdot+y_k))_{k\in\N}$ converges uniformly on bounded sets to some
$A_\infty\in \dot C^{0,1}(\R^N,\Lambda_1)$ such that
 \begin{equation}
 \label{eq:onebubble}
 E_{A_\infty}(v)\le \liminf_{k\to\infty} E_{A}(u_k). 
 \end{equation}
\end{lemma}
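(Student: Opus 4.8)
The plan is to combine the local boundedness provided by Lemma~\ref{lem:localbound} with the Arzel\`a--Ascoli extraction of a magnetic potential at infinity (Remark~\ref{rem:Lipschitz}), and then to pass to the limit in the energy by a standard weak lower semicontinuity argument adapted to the magnetic gradient. First I would apply Lemma~\ref{lem:localbound} to obtain that $(g^{-1}_{y_k}u_k)_{k\in\N}$ is bounded in $H^{1,2}_\mathrm{loc}(\R^N)$; by a diagonal argument over an exhaustion of $\R^N$ by balls, a renamed subsequence converges weakly in $H^{1,2}_\mathrm{loc}(\R^N)$ to some $v\in H^{1,2}_\mathrm{loc}(\R^N)$. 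Simultaneously, since $A\in\dot C^1(\R^N,\Lambda_1)$, the functions $A_{y_k}(\cdot+y_k)$ vanish at $0$ by \eqref{eq:Ay}, have uniformly bounded derivatives, and by Lemma~\ref{lem:phase2} satisfy $|A_{y_k}(x+y_k)|\le\|B\|_\infty|x|$; Arzel\`a--Ascoli (Remark~\ref{rem:Lipschitz}) then yields, after a further renaming, uniform convergence on bounded sets $A_{y_k}(\cdot+y_k)\to A_\infty\in\dot C^{0,1}(\R^N,\Lambda_1)$, and $dA_{y_k}(\cdot+y_k)\to dA_\infty =: B_\infty$ in $L^\infty_\mathrm{loc}$.

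The core of the argument is then the energy inequality \eqref{eq:onebubble}. Using the abbreviation $A_k := A_{y_k}(\cdot+y_k)$ and $w_k := g^{-1}_{y_k}u_k$, the invariance \eqref{eq:EAphasing} gives $E_A(u_k)=E_{A_k}(w_k)$, so it suffices to prove
\begin{equation*}
E_{A_\infty}(v)\le\liminf_{k\to\infty} E_{A_k}(w_k).
\end{equation*}
Fix $R>0$. On $B_R(0)$ we have $w_k\rightharpoonup v$ in $H^{1,2}(B_R(0))$, hence $w_k\to v$ strongly in $L^2(B_R(0))$ and a.e.; combined with $A_k\to A_\infty$ uniformly on $B_R(0)$, this gives $A_k w_k\to A_\infty v$ strongly in $L^2(B_R(0))$, while $\nabla w_k\rightharpoonup\nabla v$ weakly in $L^2(B_R(0))$. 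Therefore $\nabla_{A_k} w_k = \nabla w_k + \i A_k w_k \rightharpoonup \nabla v + \i A_\infty v = \nabla_{A_\infty} v$ weakly in $L^2(B_R(0);\C^N)$, and weak lower semicontinuity of the $L^2$-norm yields
\begin{equation*}
\int_{B_R(0)}|\nabla_{A_\infty} v|^2\,\dx{x}\le\liminf_{k\to\infty}\int_{B_R(0)}|\nabla_{A_k} w_k|^2\,\dx{x}\le\liminf_{k\to\infty}E_{A_k}(w_k)=\liminf_{k\to\infty}E_A(u_k).
\end{equation*}
Letting $R\to\infty$ via monotone convergence on the left gives $E_{A_\infty}(v)\le\liminf_{k\to\infty}E_A(u_k)$, which is \eqref{eq:onebubble}. (That $v$ belongs to $\dot H^{1,2}_{A_\infty}$, i.e.\ has finite magnetic energy, is exactly the content of this inequality once the right-hand side is finite, which holds by hypothesis.)

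The step I expect to require the most care is the decomposition $\nabla_{A_k}w_k\rightharpoonup\nabla_{A_\infty}v$: one must verify that the two convergences (weak convergence of $\nabla w_k$ and strong $L^2_\mathrm{loc}$ convergence of $A_k w_k$) are legitimately on the \emph{same} ball with the \emph{uniform} bound $|A_k|\le\|B\|_\infty|x|$ from Lemma~\ref{lem:phase2} controlling the product $A_k w_k$ in $L^2(B_R(0))$ — this is where boundedness of $B$ and $A\in\dot C^1$ genuinely enter, and where the need to pass to a subsequence for the Arzel\`a--Ascoli limit of $(A_k)$ is unavoidable. The remaining delicate point is purely bookkeeping: the two subsequence extractions (one for $w_k$, one for $A_k$) must be interleaved by a single diagonal argument so that the limits $v$ and $A_\infty$ correspond to the same renamed subsequence, as asserted in the statement.
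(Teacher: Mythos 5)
Your proof is correct and follows essentially the same approach as the paper: local boundedness from Lemma~\ref{lem:localbound}, Arzel\`a--Ascoli extraction of $A_\infty$, the identity $E_A(u_k)=E_{A_k}(w_k)$ from \eqref{eq:EAphasing}, and weak lower semicontinuity on an exhaustion by balls. The only cosmetic difference is that you establish $\nabla_{A_k}w_k\rightharpoonup\nabla_{A_\infty}v$ in $L^2(B_R)$ directly (weak gradient plus strong $L^2$ limit of $A_kw_k$) and invoke lower semicontinuity of the $L^2$-norm, whereas the paper freezes the potential at $A_\infty$, expands $|\nabla_{A_k}v_k|^2-|\nabla_{A_\infty}v_k|^2$, estimates the cross terms using $\|A_k-A_\infty\|_{L^\infty(B_R)}\to 0$, and then applies lower semicontinuity of the functional $u\mapsto\int_{B_R}|\nabla_{A_\infty}u|^2$; both routes rest on the identical ingredients.
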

\begin{proof} To shorten notation we shall set, for any $k\in\N$,
$$A_k\eqdef A_{y_k}(\cdot+y_k)\quad \mbox{ and }\quad v_k\eqdef g^{-1}_{y_k}u_k.$$
By Lemma~\ref{lem:localbound} the sequence $(v_k)_{k\in\N}$ is bounded in $H^{1,2}_\mathrm{loc}(\R^N)$ and thus it has a subsequence weakly convergent to some
$v\in H^{1,2}_\mathrm{loc}(\R^N)$.
Consider a (renamed) subsequence, given by the Arzel\`a-Ascoli theorem, such that $(A_k)_{k\in\N}$ converges uniformly on bounded sets to $A_\infty\eqdef A_\infty^{(Y)}$ (see Definition \ref{def:AY}).
Set for any $R>0$, $\Omega_R \eqdef B_R(0)$, by applying \eqref{eq:commutation}, we have
%with $v_k\eqdef g^{-1}_{y_k}u_k=e^{-i\varphi_{y_k}(\cdot+y_k)}u_k(\cdot+y_k)\rightharpoonup v$,
%\begin{align*}
% E_A(u_k)\ge \int_{B_R(0)}|\nabla_Au_k|^2(\cdot+y_k)|\dx{x}=
%\int_{B_R(0)}|\nabla_{A_{y_k(\cdot+y_k)}}(g^{-1}_{y_k}u_k)|^2 \dx{x}=
%\\
%\int_{B_R(0)}|\nabla_{A_\infty}{g_{y_k}}^{-1}u_k|^2\dx{x} +
%\int_{B_R(0)}(|A_\infty|^2-|A_{y_k}(\cdot+y_k)|^2)|u_k(\cdot+y_k)|^2\dx{x}
%+
%\\
%i\int_{B_R(0)}(A_\infty-A_{y_k}(\cdot+y_k))(\nabla u_k\overline u_k-u_k\nabla \overline u_k)(\cdot+y_k)\dx{x}
%\\
%+2\int_{B_R(0)}(A_\infty-A_{y_k}(\cdot+y_k))\nabla \varphi_{y_k}(\cdot+y_k)|u_k(\cdot+y_k)|^2\dx{x}
%\ge
%\\	
%\int_{B_R(0)}|\nabla_{A_\infty}g^{-1}_{y_k}u_k|^2\dx{x} 	- \left\||A_{y_k}(\cdot+y_k)|^2-|A_\infty|^2\right\|_{L^\infty(B_r(0))}\int_{B_R(y_k)}|u_k|^2\dx{x}\; -
%\\
%	 \left\|A_{y_k}(\cdot+y_k)-A_\infty\right\|_{L^\infty(B_r(0))}\left\|\nabla\varphi_{y_k}(\cdot+y_k)\right\|_{L^\infty(B_r(0))}
%	 \int_{B_R(y_k)}|u_k|^2\dx{x}\; -
%\\
%2\left\|A_{y_k}(\cdot+y_k)-A_\infty\right\|_{L^\infty(B_r(0))}\left(\int_{B_R(y_k)}|\nabla u_k|^2\dx{x}\right)^\frac{1}{2}
%\left(\int_{B_R(y_k)}|u_k|^2\dx{x}\right)^\frac{1}{2}=
%\\
%\int_{B_R(0)}|\nabla_{A_\infty}g^{-1}_{y_k}u_k|^2\dx{x} +o(1).
%\end{align*}
\begin{equation*}\label{eq:EAR}
\begin{split}
E_A(u_k)& =E_A(g_{y_k}v_k)\ge 
%\int_{B_R(0)}|\nabla_Au_k|^2(\cdot+y_k)|\dx{x}
\int_{\Omega_R}|\nabla_{A_{y_k(\cdot+y_k)}}({g_{y_k}}^{-1}u_k)|^2 \dx{x}\\
& =\int_{\Omega_R}|\nabla_{A_k}(v_k)|^2 \dx{x}\quad \mbox{ for all } R>0.\\
\end{split}
\end{equation*}
Now, let us remark that
\begin{equation*}
\begin{split}
|\nabla_{A_k}v_k|^2& =|\nabla v_k +\i A_\infty v_k-\i (A_\infty-A_k)v_k|^2=
|\nabla_{A_\infty}v_k-\i (A_\infty-A_k)v_k|^2\\
& =
|\nabla_{A_\infty}v_k|^2+\i (A_\infty-A_k)
\left(\overline{v_k}\nabla_{A_\infty}v_k-v_k\overline{\nabla_{A_\infty}v_k} \right)+
|A_\infty-A_k|^2 |v_k|^2,\\
\end{split}
\end{equation*}
and that
$$
\overline{v_k}\nabla_{A_\infty}v_k-v_k\overline{\nabla_{A_\infty}v_k}= 
\overline{v_k}\nabla v_k-v_k \nabla \overline{v_k}+ 2\i A_\infty|v_k|^2.
$$
So, we deduce the following relation
\begin{equation*}
\begin{split}
|\nabla_{A_k}v_k|^2 &=
|\nabla_{A_\infty}v_k|^2-2 A_\infty (A_\infty-A_k) |v_k|^2
+|A_\infty-A_k|^2 |v_k|^2\\
&\quad +
\i (A_\infty-A_k) \left(\overline{v_k}\nabla v_k-v_k \nabla\overline{v_k} \right),\\
\end{split}
\end{equation*}
and, by arguing as in \eqref{products}, we get (by applying Cauchy-Schwarz Inequality)
$$
|\nabla_{A_k}v_k|^2\geq
|\nabla_{A_\infty}v_k|^2-2 |A_\infty| |A_\infty-A_k| |v_k|^2
+|A_\infty-A_k|^2 |v_k|^2
-4 |A_\infty-A_k| |v_k|\,|\nabla v_k|.
$$
Then, setting $\alpha_k\eqdef \|A_k\|_{L^\infty(\Omega_R)}$ and 
$\delta_k\eqdef \|A_\infty-A_k\|_{L^\infty(\Omega_R)}$ we get, by H\"older Inequality, that
\begin{equation*}
\begin{split}
E_A(u_k)& \geq \int_{\Omega_R}|\nabla_{A_\infty}v_k|^2\dx{x} 
-2 \a_k\delta_k \int_{\Omega_R}|v_k|^2\dx{x}+
\delta_k \int_{\Omega_R}|v_k|^2\dx{x}\\
& \quad -
4 \delta_k \|v_k\|_{L^2(\Omega_R)} \|\nabla v_k\|_{L^2(\Omega_R)}\\
& = 
\int_{\Omega_R}|\nabla_{A_\infty}v_k|^2\dx{x} +o(1)
\end{split}
\end{equation*}
where we have taken into account that $\delta_k\to 0$ and the existence of the local bound in $H^{1,2}$ for the sequence $(v_k)_{k\in\N}$ according to Lemma~\ref{lem:localbound}.
So, by weak lower semicontinuity of seminorms, \eqref{eq:onebubble}
follows by taking $R\to \infty$.
\end{proof}
We will use the following statement \cite[Corollary~3.3]{ccbook} (which can also be derived from  Solimini's result \cite[Theorem 2]{Sol}, with the addition of the property on the unconditional convergence of the series as underlined in the Banach space version of the same theorem in \cite{ST1}).
\begin{proposition}
	\label{acc-shifts}
	Let $(u_{k})_{k\in\N}$ be a bounded sequence in $H^{1,2}(\R^N)$.
	Then, for all $n\in\N$, there
	exist $w^{(n)} \in H^{1,2}(\R^N)$, 
	$(y_{k} ^{(n)})_{k\in\N}$ in $\Z^N$,
	%$y_{k} ^{(0)} =0$, 
%	with $n\in\N_0$,
	such that, on a renamed subsequence,
	\begin{eqnarray}
	\label{w_n-shifts} && u_k(\cdot+y_{k} ^{(n)})\rightharpoonup w^{(n)},
	\\
	&&\label{separates-shifts} |y_{k} ^{(n)}-y_{k} ^{(m)}|\to\infty  \mbox{ for
	} n \neq m,
	\\
	&&\label{norms-shifts} \sum_{n \in \N} \|w ^{(n)}\|_{H^{1,2}}^2 \le
	\limsup_{k\to\infty} \|u_k\|_{H^{1,2}}^2,
	\\
	&&\label{BBasymptotics-shifts} u_{k} - \sum_{n\in\N}
	w^{(n)}(\cdot-y_{k} ^{(n)})  \to 0\, \text{in }
	L^p(\R^N),\,\forall p\in (2,2^*), 
	\end{eqnarray}
and the series in (\ref{BBasymptotics-shifts}) converges
	in $H^{1,2}(\R^N)$ unconditionally (with respect to $n$) and uniformly in $k$.
Moreover, 
\begin{equation}\label{eq:saturation}
u_k(\cdot +y_k) \rightharpoonup 0 \quad \mbox{ for all } (y_k)_{k\in\N} \mbox{ in $\R^N$ s.t. }
|y_k-y^{(n)}_k|\to \infty, \forall n\in\N\,.
\end{equation}
\end{proposition}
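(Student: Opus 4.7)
The plan is to construct the profiles $w^{(n)}$ and translations $y_k^{(n)}\in\Z^N$ inductively, by extracting at each step the strongest remaining concentration of local $L^2$-mass along $\Z^N$-translations. Set $u_k^{(0)}:=u_k$. Given $u_k^{(n)}$, consider the concentration parameter
\[
d^{(n)}:=\limsup_{k\to\infty}\sup_{y\in\Z^N}\int_{B_1(y)}|u_k^{(n)}|^2\dx{x}.
\]
If $d^{(n)}=0$, stop the extraction and set $w^{(m)}=0$ for all $m\ge n$. Otherwise pick $y_k^{(n)}\in\Z^N$ with $\int_{B_1(y_k^{(n)})}|u_k^{(n)}|^2\dx{x}\ge d^{(n)}/2$, pass to a subsequence so that $u_k^{(n)}(\cdot+y_k^{(n)})\rightharpoonup w^{(n)}$ in $H^{1,2}(\R^N)$, and define $u_k^{(n+1)}:=u_k^{(n)}-w^{(n)}(\cdot-y_k^{(n)})$. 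Compactness of $H^{1,2}(B_1(0))\hookrightarrow L^2(B_1(0))$ forces $\int_{B_1(0)}|w^{(n)}|^2\dx{x}\ge d^{(n)}/2$, so $w^{(n)}\ne 0$ and $\|w^{(n)}\|_{H^{1,2}}^2\ge c\,d^{(n)}$ for some universal $c>0$.

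I would then verify the separation property \eqref{separates-shifts} by contradiction: if $|y_k^{(m)}-y_k^{(n)}|$ stayed bounded along a subsequence for some $m>n$, I could pass to a sub-subsequence with $y_k^{(m)}-y_k^{(n)}\to z\in\Z^N$. Unwinding the subtractions from step $n+1$ up to step $m$ and noting that $u_k^{(n+1)}(\cdot+y_k^{(n)})\rightharpoonup 0$ by construction, a translation by the bounded shift $z$ preserves this weak convergence in $H^{1,2}_{\mathrm{loc}}$, iterating up to $m$ would force $w^{(m)}=0$, contradicting $d^{(m)}>0$.

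For the energy bound, decomposing $u_k^{(n)}(\cdot+y_k^{(n)})=w^{(n)}+v_k^{(n)}$ in the Hilbert space $H^{1,2}(\R^N)$ with $v_k^{(n)}\rightharpoonup 0$ yields
\[
\|u_k^{(n)}\|_{H^{1,2}}^2=\|w^{(n)}\|_{H^{1,2}}^2+\|u_k^{(n+1)}\|_{H^{1,2}}^2+o(1),
\]
and telescoping gives $\sum_{n=0}^M\|w^{(n)}\|^2_{H^{1,2}}\le\limsup_k\|u_k\|^2_{H^{1,2}}$ for every finite $M$, hence \eqref{norms-shifts}. Consequently $d^{(n)}\to 0$, and the classical cocompactness of the embedding $H^{1,2}(\R^N)\hookrightarrow L^p(\R^N)$ for $p\in(2,2^*)$ with respect to $\Z^N$-translations (Lions' vanishing lemma, which is the non-magnetic analogue of Lemma~\ref{lem:vanish}) yields $\limsup_k\|u_k^{(n)}\|_{L^p(\R^N)}\to 0$ as $n\to\infty$. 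Combined with the asymptotic disjointness of supports provided by \eqref{separates-shifts}, a diagonal extraction gives \eqref{BBasymptotics-shifts}, and the unconditional, uniform-in-$k$ convergence of the series in $H^{1,2}(\R^N)$ follows because, on any fixed ball, the translates $w^{(n)}(\cdot-y_k^{(n)})$ have essentially disjoint supports for $k$ large, reducing tail estimates to the summable tail $\sum_{n\ge M}\|w^{(n)}\|^2_{H^{1,2}}$. The saturation property \eqref{eq:saturation} is automatic: if $|y_k-y_k^{(n)}|\to\infty$ for every $n$, then the already-extracted mass drifts away from $y_k$, so $\int_{B_1(y_k)}|u_k^{(n)}|^2\dx{x}$ tends to whatever mass an accumulation $w\ne 0$ of $u_k(\cdot+y_k)$ would carry; but this lower bound on $d^{(n)}$ for every $n$ contradicts $d^{(n)}\to 0$.

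The main obstacle is the interplay between the iteration and the uniformity: translating finite-stage estimates $\limsup_k\|u_k^{(n)}\|_{L^p}\to 0$ into an $L^p$-remainder statement valid along a single diagonal subsequence, while keeping the separations \eqref{separates-shifts} across the entire infinite family of profiles and controlling the unconditional convergence uniformly in $k$. This is standard but requires careful bookkeeping, and it is where the cited argument of Solimini/\cite{ccbook} is used.
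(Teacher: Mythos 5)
The paper does not prove this proposition at all: it is imported verbatim from \cite[Corollary~3.3]{ccbook} (alternatively \cite{Sol}, with the unconditional-convergence refinement from \cite{ST1}). So your proposal necessarily takes a different route: you reconstruct the classical iterative extraction — maximal local $L^2$-mass $d^{(n)}$, choice of $y_k^{(n)}$, weak limit of the translated residual, subtraction, Hilbert-space orthogonal expansion giving \eqref{norms-shifts}, $d^{(n)}\to 0$, and Lions' vanishing lemma for the $L^p$ remainder. That skeleton is the standard proof and is essentially sound; what the citation buys the paper is precisely the bookkeeping you defer, while your sketch buys a self-contained picture of where each conclusion comes from.

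Three points in your sketch are thinner than they should be, and they are exactly the content of the cited references. First, your $w^{(n)}$ is defined as the weak limit of $u_k^{(n)}(\cdot+y_k^{(n)})$, whereas \eqref{w_n-shifts} asserts weak convergence of the translates of the \emph{original} sequence $u_k(\cdot+y_k^{(n)})$; the two limits coincide only after \eqref{separates-shifts} is established, since $u_k(\cdot+y_k^{(n)})=u_k^{(n)}(\cdot+y_k^{(n)})+\sum_{m<n}w^{(m)}(\cdot+y_k^{(n)}-y_k^{(m)})$ and the sum is weakly null by separation. Second, in the separation argument the "unwinding" from step $n+1$ to step $m$ involves the intermediate profiles $w^{(j)}$, $n<j<m$, translated by $y_k^{(m)}-y_k^{(j)}$; to kill these terms you need a strong induction hypothesis (separation already known for all pairs of indices below $m$), which your sketch does not state — without it the contradiction $w^{(m)}=0$ does not follow. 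Third, and most seriously, the uniform-in-$k$, unconditional $H^{1,2}$-convergence of the series and the passage from $\limsup_k\|u_k^{(M)}\|_{L^p}\to 0$ to the infinite-series statement \eqref{BBasymptotics-shifts} cannot be reduced, as you claim, to "essentially disjoint supports on a fixed ball": the profiles are not compactly supported, disjointness of the translates only holds for $k$ large (for small $k$ the points $y_k^{(n)}$ may even coincide), and the triangle inequality only gives $\sum_n\|w^{(n)}\|_{H^{1,2}}$, which need not be finite since \eqref{norms-shifts} controls only the squares. Making the tail estimate work requires the almost-orthogonality argument of \cite{Sol,ST1,ccbook}, which you acknowledge but do not supply; as the paper itself relies on those sources for the whole statement, your proposal is acceptable as a sketch provided these three steps are either carried out or explicitly cited.
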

\begin{remark}
	\label{rem:acc-shifts}
	One can replace $\Z^N$ in Proposition~\ref{acc-shifts} with any other discretization $\Xi$ of $\R^N$. Indeed, for any sequence $(y_k^{(n)})_{k\in\N}$ in $\Z^N$ there exists a sequence
$(\bar y_k^{(n)})_{k\in\N}$ in $\Xi$, such that $(\bar y_k^{(n)} - y_k^{(n)})_{k\in\N}$ is bounded. Then, passing to a renamed subsequence and by using standard diagonalization, we may assume that 
	$\left(\bar y_k^{(n)}- y_k^{(n)}\right)_{k\in\N}$ converges to some point $z_n\in\R^N$, and  
	all assertions of Proposition~\ref{acc-shifts} will hold with $\bar y_k^{(n)}$ and $\overline{w}^{(n)}\eqdef w^{(n)}(\cdot+z_n)$ replacing $y_k^{(n)}$ and $w^{(n)}$ respectively.
\end{remark}
%
%
%FARE REMARK sul fatto che i $g_{y^{(n)}_k}$ saturano, per $n\in\N$ tutti i possibili mutualmente divergenti che determinano limiti deboli non nulli (profili).
%
%
\begin{DIof}{Theorem~\ref{thm:MagneticPD}}
Let $(u_k)_{k\in\N}$ be a bounded sequence in $H^{1,2}_A(\R^N)$, then, by \eqref{eq:diamag}, we can apply Proposition~\ref{acc-shifts}, appended by Remark~\ref{rem:acc-shifts}, to the sequence $(|u_k|)_{k\in\N}$.
Denoting the magnetic shift $g_{y_k^{(n)}}$ determined by $y_k^{(n)}$ as $g_k^{(n)}$ (see Definition \ref{def:g} and \eqref{eq:short}) and the magnetic potential $A_{y_k^{(n)}}(\cdot + y_k^{(n)})$ as $A_k^{(n)}$,
 we have, by Lemma~\ref{lem:onebubble}, that, on a renamed subsequence,  
$\left({(g_k^{(n)}})^{-1}u_k\right)_{k\in\N}$ converges weakly 
in $H^{1,2}_{\mathrm{loc}}(\R^N)$ to some $v^{(n)}\in H_{\mathrm{loc}}^{1,2}(\R^N)$, i.e. 
\begin{equation*}
\label{eq:vn}
v^{(n)}\eqdef\wlim_{k\to\infty} (g_k^{(n)})^{-1}u_k,
\end{equation*}
while $A_k^{(n)}\to A_\infty^{(n)}\in C^{0,1}$ uniformly on bounded sets. 
It follows from the equality
$|{g_k^{(n)}}u_k|=|u_k(\cdot+y_k^{(n)})|$ and from \eqref{w_n-shifts} that $|v^{(n)}|=w^{(n)}$ for all $n$. 
Furthermore, by taking into account \eqref{eq:saturation}, we have, for the same reason as above, that $g_{y_k}u_k\to 0$ whenever $|y_k-y_k^{(n)}|\to\infty$.
 %(so since $g_{y_k^{(n)}}$ defined by the sequences $Y^{(n)}=(y_k^{(n)})_{k\in\N}$, $n\in\N$, saturate all the possible mutually diverging shifts which determine the nontrivial profiles $w^{(n)}$). 

It easily follows from convergence properties of the series  in (\ref{BBasymptotics-shifts}) that the series in \eqref{BBasymptotics-mag} converges unconditionally (with respect to $n$) and uniformly in $k$ in  $H^{1,2}_\mathrm{loc}(\R^N)$.

Now, we shall prove \eqref{BBasymptotics-mag} by means of Lemma \ref{lem:vanish}, i.e.\ by proving that the sequence $(r_k)_{k\in\N}$ (therein defined) $\mathcal{G}$-converges to zero.
So, given a sequence $(y_k)_{k\in\N}$ in $\Xi$, we have to prove that, for the corresponding sequence of magnetic shifts $(g_{y_k})_{k\in\N}$, we have
$g^{-1}_{y_k}  r_k \rightharpoonup 0$ in $H^{1,2}_{\mathrm{loc}}(\R^N)$. 
Actually, we have to face two cases.

If (first case), for all $n$, $|y_k-y^{(n)}_k|\to +\infty$ as $k\to +\infty$, we have, by \eqref{eq:gsaturation}, that 
%$(g_{y_k})_{k\in\N}$ is mutually diverging with respect to the others sequences of magnetic shifts $(g^{(n)}_k)_{k\in\N}$ provided by Proposition \ref{acc-shifts} (see also Remark ... e lavoro sugli spazi $L^p$).
%So, we easily deduce that 
%
%Let $(y_k)_{k\in\N}$ in $\Z^N$ be such that $|y_k-y_k^{(n)}|\to\infty$ for all $n\in\N$, then 
not only 
$g_{y_k}u_k\rightharpoonup 0$, 
%(because $g_{y_k}|u_k|\to 0$ $a.e.$)
but also each term in the series of concentrations vanishes as well, i.e.\ $g_{y_k} g^{(n)}_k v^{(n)}\rightharpoonup 0$, so $g_{y_k} r_k \rightharpoonup 0$. 

If otherwise (second case), $(y_k)_{k\in\N}$ in $\R^N$ is such that $(|y_k-y_k^{(n)}|)_{k\in\N}$ is bounded for some $n\in\N$, then passing to a renamed subsequence we may assume that $y_k-y_k^{(n)}\to\overline{y}^{(n)}$ with some $\overline{y}^{(n)}\in\R^N$. Note that, in such a case, $|g_{y_k}(g_k^{(n)})^{-1}u|\to |u(\cdot -\overline y^{(n)})|$ in $H^{1,2}(\R^N)$ and that the same is true to their adjoint operators as well.  
Then, 
%, we get that the backward shift of the difference between $u_k$ and each element in \eqref{BBasymptotics-mag} weakly converges to zero. Indeed 
denoting by $o^w(1)$ a sequence weakly convergent to zero, we have 
\begin{equation*}
\begin{split}
\left|
g_{y_k}\left(u_k-(g_k^{(n)})^{-1}v^{(n)}\right)
\right|& =
\left|
g_{y_k}(g_k^{(n)})^{-1}({g_k^{(n)}}u_k-v^{(n)})
\right|\\
%& = 
%\left|
%g_{y_k}(g_{y_k})^{-1}({g_k^{(n)}}u_k-v^{(n)})
% (\cdot -\overline{y}) \right|
\\
& =
\left|
\left(g_k^{(n)}u_k-v^{(n)}\right)(\cdot-\overline{y}^{(n)})
\right|+o^w(1)\rightharpoonup 0 \;,
% a.e.,
\end{split}
\end{equation*}
from which we conclude that $g_{y_k} r_k \rightharpoonup 0$.
Then, 
% $r_k\cw 0$, and, 
by Lemma~\ref{lem:vanish}, we deduce that
$r_k\to 0$ in $L^p(\R^N)$, for all $p\in(2,2^*)$ getting \eqref{BBasymptotics-mag}.
%\hfill $\Box$
\end{DIof}
\vskip2mm
%For each open set $\Omega\subset\R^N$ let us define
%\begin{align*}
%E_A^\Omega(u) & \eqdef \int_\Omega |\nabla_A u|^2\;\dx{x},\\
%\pairing{u}{v}_A^\Omega & \eqdef  \int_\Omega \nabla_A u\,\overline{\nabla_A v}\;\dx{x}.
%\end{align*}
%\vskip4mm
\begin{DIof}{of Theorem~\ref{thm:energies}}
The first assertion is an iterated version of Brezis-Lieb Lemma for the sequence $(|u_k|)_{k\in\N}$ and its profile decomposition in $H^{1,2}(\R^N)$ (see e.g. \cite{CwiTi} : note that $|u_k|(\cdot-y_k^{(n)})=|g_k^{(n)}u_k|\rightharpoonup |v^{(n)}|$ in $H^{1,2}(\R^N)$ since for sequences bounded in $H^{1,2}(\R^N)$ weak and $a.e.$ convergence coincide). 

Let us prove the third assertion.
Following the proof of Lemma~\ref{lem:onebubble} to $v=v^{(n)}$, $y_k=y_k^{(n)}$, we have
\begin{equation}\label{eq:LowerBound}
\int_{B_R(y_k^{(n)})}|\nabla_Au_k|^2\dx{x}\ge \int_{B_R(0)}|\nabla_{A^{(n)}_\infty}v^{(n)}|^2\dx{x} +o_{k\to\infty}(1).
\end{equation}
On the other hand, due to \eqref{separates-shifts}, we can assume that, for any $R>0$, for $k$ large enough,
$B_R(y^{(n)}_k)\cap B_R(y^{(m)}_k)=\emptyset$ for all $m\neq n$, so for any $M\in\N$, $M\neq 0$,
$E_A(u_k)\geq \sum_{n=0}^M\int_{B_R(y^{(n)}_k)}|\nabla_A u_k|^2 \dx{x}$. So, by \eqref{eq:LowerBound}, we get
\begin{align*}
\liminf_{k\to\infty} E_A(u_k) \ge
\sum_{n=0}^{M}\int_{B_R(0)}|\nabla_{A^{(n)}_\infty}v^{(n)}|^2\dx{x}.
\end{align*}
Then, \eqref{norms-mag} follows since $M$ and $R$ are arbitrary.
%The argument for the second assertion is analogous.
\end{DIof}

\section{Applications}
Given a bounded electric potential $V$, described as a positive measurable function on $\R^N$, we introduce the following functional defined by setting for all $u\in H^{1,2}_A(\R^N)$
\begin{equation}\label{eq:JAV}
J_{A,V}(u)=\int_{\R^N}\left(|(\nabla_A u(x)|^2+V(x)|u(x)|^2\right)\dx{x}.
\end{equation}
%Finally we shall also make use of the following functional $I_{A,V}$ defined on $H^{1,2}_A(\R^N)$ by setting
%\begin{equation}
%\label{eq:IAV}
%I_{A,V}(u)=\frac{1}{2} J_{A,V}(u)-\frac{1}{p}\int_{\R^N} |u|^p\dx x.
%\end{equation}
We shall emphasize the case in which the electric potential is constant, i.e.\ when $V\equiv \lambda\in \R$, by using the notation $J_{A,\lambda}$ instead of 
%and $I_{A,\lambda}$ to respectively denote 
$J_{A,V}$.
%and $I_{A,V}$).
%So that
%$J_{A,\lambda}(u)=
%\|u\|_{H^{1,2}_A}^2+\lambda \|u\|_2^2$ and
%$I_{A,\lambda}(u)=
%\frac{1}{2}\left(\|u\|_{H^{1,2}_A}^2+\lambda \|u\|_2^2\right) -\frac{1}{p}\|u\|_p^p$.

\subsection{A ground state problem}
Set $\lambda_0\eqdef \inf_{\|u\|_2=1}E_A(u)$, we shall introduce, for $\lambda>-\lambda_0$,
the following minimization problems:
\begin{equation}\label{eq:inf0}
C(p,N,A,\lambda)\eqdef \inf_{\stackrel{u\in H^{1,2}_A(\R^N)}{\|u\|_p=1}}J_{A,\lambda}(u),\; \quad p\in(2,2^*).
\end{equation}
We will show that, for any $p\in(2,2^*)$, the problem \eqref{eq:inf0} attains no minimum if the magnetic field $B=dA$ vanishes at infinity.

It is worth to remark that several known existence results (see \cite{ArioliSzulkin}, \cite{Kurata}, \cite{SchinTinMag}) prove existence of minimum when the electric potential increases at infinity and offsets the possible decrease of energy (due to the magnetic field) at infinity, penalizing (in minimizing sequences) any possible translation towards infinity.
Of course, this setting excludes the constant potential $V\equiv \lambda$.
Below, we show in an elementary statement, that if the magnetic field at infinity is zero and the electric potential $V\equiv \lambda$ is constant (so there is no penalty mechanism) the ground state does not exist. Note that the statement is restricted only to positive values of $\lambda$, leaving it as an elementary exercise for the reader to prove that, when $V\equiv \lambda\le 0$, the infimum in \eqref{eq:inf0} is zero.

\begin{proposition}\label{prop:inf}
Let $N\ge 3$ and let $B=d A$ be a bounded magnetic field with $A\in C^{1,\a}$ for some $\a\in(0,1)$. Assume moreover that $\mathrm{supp} B=\R^N$ and that $\lim_{|x|\to\infty}B(x)=0$. Then, for all $\lambda>0$, Problem \eqref{eq:inf0} has no solution, in particular the infimum is positive but it is not attained.
\end{proposition}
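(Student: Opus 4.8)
The plan is to show first that the infimum $C(p,N,A,\lambda)$ is positive, and then that it cannot be attained, by comparing with the energy at infinity and exploiting the vanishing of the magnetic field.

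\textbf{Positivity of the infimum.} For any $u\in H^{1,2}_A(\R^N)$ with $\|u\|_p=1$, the diamagnetic inequality \eqref{eq:diamag} gives $E_A(u)\ge E_0(|u|)$, and since $\lambda>0$ we have $J_{A,\lambda}(u)\ge E_0(|u|)+\lambda\|u\|_2^2\ge \|\,|u|\,\|_{H^{1,2}}^2\cdot\min(1,\lambda)$, which by the Sobolev embedding $H^{1,2}(\R^N)\hookrightarrow L^p(\R^N)$ is bounded below by a positive constant times $\|u\|_p^2=1$. Hence $C(p,N,A,\lambda)\ge c>0$.

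\textbf{Non-attainment.} Suppose for contradiction that the infimum is attained by some $u$ with $\|u\|_p=1$. The key comparison is with the constant-potential problem $C(p,N,0,\lambda)$ for the trivial magnetic field (i.e.\ the standard scalar minimization problem), which is attained by the usual positive radial ground state. First I would observe that $C(p,N,A,\lambda)\ge C(p,N,0,\lambda)$ by the diamagnetic inequality: $J_{A,\lambda}(u)\ge J_{0,\lambda}(|u|)\ge C(p,N,0,\lambda)\|u\|_p^2$. Conversely, to get the opposite inequality $C(p,N,A,\lambda)\le C(p,N,0,\lambda)$, I would use a minimizing sequence obtained by translating the scalar ground state towards infinity: take $u_k(x)=g^{(k)}w(x)$ where $w$ is the scalar ground state for $C(p,N,0,\lambda)$, suitably magnetically shifted (i.e.\ $u_k = g_{y_k}w$ with $|y_k|\to\infty$), and estimate $J_{A,\lambda}(u_k)=E_{A_{y_k}(\cdot+y_k)}(w)+\lambda\|w\|_2^2$. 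Using Lemma~\ref{lem:phase2} and the hypothesis $B(x)\to 0$ at infinity — which via Remark~\ref{rem:Lipschitz} forces $A^{(Y)}_\infty$ to be a closed form, hence gauge-equivalent to $0$, so that $E_{A^{(Y)}_\infty}(w)=E_0(w)$ after a re-phasing — one obtains $\limsup_k J_{A,\lambda}(u_k)\le J_{0,\lambda}(w)=C(p,N,0,\lambda)$. Therefore $C(p,N,A,\lambda)=C(p,N,0,\lambda)$ and the purported minimizer $u$ would have to satisfy equality in the diamagnetic inequality, forcing $|u|$ to itself be a minimizer for the scalar problem while $\nabla_A u = e^{\i\theta}\,|\nabla|u|\,|$ pointwise in the appropriate sense. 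This rigidity means $u = e^{\i\sigma(x)}|u|$ with $\nabla\sigma = -A$ locally, which is impossible unless $dA=B\equiv 0$ on the support of $|u|$ — contradicting $\mathrm{supp}\,B=\R^N$ and the fact that any scalar ground state has full support.

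\textbf{Main obstacle.} The delicate point is the equality-in-diamagnetic-inequality analysis: showing that a genuine minimizer $u$ of the magnetic problem forces $u$ to be (locally) a phase times the scalar minimizer with $\nabla(\arg u)=-A$, and then deriving a contradiction with $\mathrm{supp}\,B=\R^N$. One must handle the possibility that $u$ vanishes on a set — but a scalar ground state $|u|$ for the subcritical problem is strictly positive by the maximum principle, so $\arg u$ is well-defined and smooth, and $dA=d(\nabla\arg u)=0$ everywhere, contradicting $\mathrm{supp}\,B=\R^N$. An alternative, perhaps cleaner, route avoiding the rigidity analysis is purely variational: if $u$ attained $C(p,N,A,\lambda)$, then by Theorem~\ref{thm:MagneticPD} applied to a minimizing sequence one could split off the profile at infinity which, having zero magnetic field at infinity, carries energy at least $C(p,N,0,\lambda)=C(p,N,A,\lambda)$ with strictly smaller $L^p$-mass, and an iterated Brezis–Lieb argument (Theorem~\ref{thm:energies}) would show the mass entirely escapes to infinity, so no minimizer exists; I would present this second argument as the primary proof since it uses exactly the machinery developed in the paper.
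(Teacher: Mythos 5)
Your first (rigidity) argument is essentially the paper's proof. The paper obtains $C(p,N,A,\lambda)\le C(p,N,0,\lambda)$ by pushing the scalar ground state $w_0$ to infinity via magnetic shifts and using $|A_{y_k}(\cdot+y_k)|\le\|B\|_\infty|\cdot|$ together with $B\to 0$ at infinity, just as you do; and the strict inequality $J_{A,\lambda}(u)>J_{0,\lambda}(|u|)$ for a putative minimizer $u$ is what gives the contradiction. The paper justifies the strictness with the one-line assertion that ``the diamagnetic inequality is strict on $\supp B\cap\supp u$'', which, read pointwise, is not correct (equality can hold where $u\neq 0$ and $B\neq 0$, whenever $\nabla(\arg u)=-A$). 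Your equality-case analysis is precisely the argument that makes this step rigorous: if $E_A(u)=E_0(|u|)$ then $\nabla(\arg u)=-A$ a.e.\ on the open set $\{u\neq 0\}$, hence $B=dA=0$ there, contradicting $\supp B=\R^N$. So on this point your write-up is, if anything, tighter than the paper's. A minor caveat: you don't actually need the maximum principle to get full support of $|u|$; it suffices that $\{u\neq 0\}$ is a nonempty open set (continuity follows from elliptic regularity for the Euler--Lagrange equation), since $\supp B=\R^N$ already excludes $B\equiv 0$ on any nonempty open set.

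Your stated preference to present the ``alternative'' profile-decomposition route as the primary proof is the one genuine flaw. That argument cannot work as formulated. If a minimizer $u$ existed, the constant sequence $u_k=u$ has no defect of compactness, so Theorems~\ref{thm:MagneticPD} and~\ref{thm:energies} yield only the trivial decomposition $v^{(0)}=u$ and nothing escapes to infinity. If instead you apply profile decomposition to a general minimizing sequence, then, once you have established $C(p,N,A,\lambda)=C(p,N,0,\lambda)$, the energy and $L^p$-mass bookkeeping of Theorem~\ref{thm:energies} is perfectly consistent with $v^{(0)}$ being a nontrivial minimizer: the profile-decomposition machinery alone cannot distinguish that scenario from vanishing $v^{(0)}$. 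What rules out a nontrivial $v^{(0)}$ is exactly the rigidity coming from equality in the diamagnetic inequality together with $\supp B=\R^N$, which is the first argument. So the profile-decomposition route is not ``cleaner''; it is circular, and the rigidity argument you relegate to a side remark should be the proof.
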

\begin{proof} 
Nonnegativity of the infimum easily follows by the assumption $\lambda>0>-\lambda_0$, while positivity follows from diamagnetic inequality \eqref{eq:diamag} and from the Sobolev embedding applied to $|u|$.
%Let $u_k\in H^{1,2}(\R^N)$ be a minimizing sequence, that is, $\|u_k\|_p=1$ and  $J_{A,\lambda}(u_k)\to C(p,N,A,\lambda)$.
%Let us now consider the renamed subsequence of $(u_k)$ given by Theorem~\ref{thm:MagneticPD}, and note that for any $n\ge 1$, the profile $w^{(n)}$ is necessarily a product of $e^{i\varphi^{(n)}}$ with some real-valued $\varphi^{(n)}\in C^1(\R^N)$ and the real-valued minimizer (whose existence and uniqueness, up to translations and change of sign, is well-known) for the problem at infinity satisfying 
%\begin{equation}\label{eq:atinfty}
%-\Delta w+\lambda w=\mu |w|^{p-2}w
%\end{equation}
% with some $\mu>0$. Indeed, for every $n\ge 1$, $A^{(n)}=\nabla \varphi^{(n)}$
%with some real-valued $\varphi^{(n)}\in C^1(\R^N)$ since $B^{(n)}=dA^{(n)}=0$. Then, since weak convergence in our case coincides with convergence almost everywhere, the profile $w^{(n)}$ will satisfy the critical equation for  \eqref{eq:inf0} with $A=\nabla \varphi^{(n)}$, and the latter, is equivalent, modulo the usual gauge transformation, to \eqref{eq:inf0} with $A=0$.

Assuming, by contradiction, that the infimum in \eqref{eq:inf0}
is attained at some function $u$, we get $C(N,p,A,\lambda)>C(N,p,0,\lambda)$. 
Indeed, by taking into account that the diamagnetic inequality \eqref{eq:diamag} is strict on $\mathrm{supp} B\cap\mathrm{supp}u$, and by using \eqref{eq:Aineq}, we have
\begin{equation}
\label{eq:CA>C0}
C(N,p,A,\lambda)=J_{A,\lambda}(u)>J_{0,\lambda}(|u|)=J_{0,\lambda}(u)\ge C(N,p,0,\lambda). 
\end{equation}
On the other hand, since the constant $C(N,p,0,\lambda)$ (see \eqref{eq:inf0} with $A\equiv 0$) is attained by the well-known minimizer $w_0$, (which decays exponentially at infinity), and since, by \eqref{eq:Aineq}, we have that the magnetic potential $A^{(Y)}_\infty$ (see Definition \ref{def:AY}) associated to any diverging sequence $Y\eqdef (y_k)_{k\in\N}\subset\R^N$ is zero, we have 
\[
C(N,p,A,\lambda)\le J_{A,\lambda}(w_0(\cdot-y_k))\to J_{0,\lambda}(w_0)=C(N,p,0,\lambda),
\] 
getting in contradiction to \eqref{eq:CA>C0}.
%so we have $C(N,p,A,\lambda)\le C(N,p,0,\lambda)$ and arrive at contradiction, which proves that the minimum does not exist. \hfill  $\Box$
\end{proof}

%\begin{remark}
%As we know that if $B$ vanishes at infinity, $E_A\ge \int_{\R^N}|u|^2W(x)\dx{x}$ only with some positive $W$ that necessarily decays at infinity  (the contrary can be checked on functions with compact support translated to infinity), we expect that $C(p,N,A,0)>0$ only for $p$ sufficiently close to $2^*$ if at all. WE NEED AN EXAMPLE! EXISTENCE OF A GROUND STATE IF MAGNETIC FIELD IS STRONG ENOUGH COULD BE IMPORTANT. 
%SKETCH OF THE EXAMPLE:
%Do it first for real-valued radial functions and assume $|A|^2$ is radial and decreasing and that $|A|^2> C/r^s$ for some $s<2$ and $r>1$. Using Strauss estimate
%$ u^p\le u^2/r^s$ with $s(p)<2$. We have $1/r^2>1/r^s$ for $r$ small and from Hardy inequality we get
%$$
%\int |\nabla u|^2+|A|^2u^2\ge \int u^2/r^s\ge \int u^p.
%$$
%
%
%\end{remark}

\subsection{A minimax problem}
Let $N\geq 3$, $\lambda>0$ and $p\in (2,2^\ast)$. 
We consider on $H^{1,2}_A(\R^N)$ the following Hamiltonian functional, (see \eqref{eq:JAV}):
	\begin{equation}\label{eq:HamiltonianT}
	\begin{split}
	I_{A,\lambda}(u)& =\frac12 J_{A,\lambda}(u)-\frac{1}{p}\int_{\R^N}|u|^p\dx{x}\\
	& =\frac{1}{2}\left(E_A(u)+\lambda \|u\|_2^2\right) -\frac{1}{p}\|u\|_p^p,
	\end{split}
	\end{equation}
%with $p\in(2,2^*)$.
%together with its ``nonmagnetic'' counterpart %$I_{0,\lambda}$ defined on $H^{1,2}(\R^N)$.
Since in this section we shall deal with the case in which the magnetic potential $A$ vanishes at infinity, we shall set
\begin{equation}\label{eq:Hamiltonian-inftyT}
\begin{split}
I_{\infty}(u)& \eqdef I_{0,\lambda}(u) =\frac12\int_{\R^N}\left(|\nabla u(x)|^2+\lambda |u(x)|^2\right)\dx{x}-\frac{1}{p}\int_{\R^N}|u|^p\dx{x}\\
&= \frac{1}{2}\left(E_0(u)+\lambda \|u\|_2^2\right) -\frac{1}{p}\|u\|_p^p.
\end{split}
\end{equation}
This functional is defined on defined on $H^{1,2}(\R^N)$.

Note that critical points of the functional $I_{A,\lambda}$ and $I_\infty$ are weak solutions to the problems
\begin{equation}
\label{eq:P}
\tag{$P$}
\begin{cases}
-\nabla^2_A u+\lambda u = |u|^{p-2}u\\
u\in H^{1,2}_A(\R^N),
\end{cases}
\end{equation}
and
\begin{equation}
\label{eq:Pinfty}
\tag{$P_\infty$}
\begin{cases}
-\Delta u+\lambda u = |u|^{p-2}u\\
u\in H^{1,2}(\R^N),
\end{cases}
\end{equation}
respectively.
In particular, since every weak nontrivial solution to \eqref{eq:Pinfty} belongs to the Nehari manifold
\begin{equation*}
\label{eq:Nehari*}
\mathcal{N}_\infty\eqdef \{u\in H^{1,2}(\R^N)\setminus \{0\}\;|\; J_{0,\lambda}(u)=\|\nabla u\|_2^2+\lambda \|u\|_2^2=\|u\|_p^p\},
\end{equation*}
we call ground state level of the functional $I_\infty$ the quantity
$$c_\infty\eqdef \inf_{\mathcal{N}_\infty} I_\infty\,.$$
It is well-known that $c_\infty$ is achieved by a unique (up to translations and change of sign) function $w_\infty$ (to which one refers as to the ground state solution to \eqref{eq:Pinfty}) which is
smooth, positive and radial decreasing with exponential decay at infinity. In particular, we have
\begin{equation}
\label{eq:Nehari}
J_{0,\lambda}(w_\infty)=\|\nabla w_\infty\|_2^2+\lambda \|w_\infty\|_2^2=\|w_\infty\|_p^p,
\end{equation}
and, as a consequence,
\begin{equation*}
\label{eq:c_inftyT}
c_\infty=\frac{p-2}{2p} \|w_\infty\|_p^p, 
\end{equation*}
or, conversely,
\begin{equation}
\label{eq:norma_inftyT}
\|w_\infty\|_p^p= \frac{2p}{p-2} c_\infty.
\end{equation}
It is possible to see $c_\infty$ as a $\min-\max$ level. Indeed, it is standard to prove that for any $u\in H^{1,2}(\R^N)\setminus\{0\}$ there exists a unique scaling factor $\overline{t}>0$ such that $\hat{u}\eqdef \overline{t} u\in \mathcal{N}_\infty$ and 
$$I_\infty(\hat{u})=\max_{t\in[0,\infty)}I_\infty(t u).$$
So,
\begin{equation*}\label{eq:mmclassic}
c_\infty=I_\infty(w_\infty)=\inf_{u\in H^{1,2}(\R^N)\setminus\{0\}}I_\infty(\hat{u})=
\inf_{u\in H^{1,2}(\R^N)\setminus\{0\}} \max_{t\in[0,\infty)}I_\infty(t u).
\end{equation*}
Now, by taking the set $\Phi\subset C([0,\infty),H^{1,2}(\R^N))$ consists of all paths satisfying 
$\sigma(0)=0$ and $\lim_{t\to\infty} |\sigma(t)|=+\infty$ (and, as a consequence,  $\lim_{t\to\infty} I_\infty(\sigma(t))=-\infty$) we have the following further characterization of $c_\infty$
%
%
%$I_\infty$ is a continuous and Fr\'echet-differentiable functional on $H^{1,2}(\R^N)$ and
%%, and thus, on $H^{1,2}_{A,V}(\R^N)$. 
%that $I_\infty$ has a unique (up to translations and change of sign) ground state $w_\infty$.
%% which satisfies (see \eqref{eq:inf0})
%%\begin{equation}
%%\label{eq:winfty}
%%J_{0,\lambda}(w_\infty)=C(N,p,0,\lambda).
%%\end{equation}
%Moreover, $w_\infty$ is a smooth positive radial decreasing function with exponential decay.
%%
%as a saddle point in the problem
\begin{equation*}\label{eq:mm-inftyT}
c_\infty\eqdef\min_{\sigma\in\Phi}\max_{t\in[0,\infty)}I_\infty(\sigma(t)).
\end{equation*}
%where $\Phi\subset C([0,\infty),H^{1,2}(\R^N))$ consists of all paths satisfying 
%$\sigma(0)=0$ and $\lim_{t\to\infty} I_\infty(\sigma(t))=-\infty$. 
%The minimum in \eqref{eq:mm-inftyT} is attained on the straight half line $\sigma(t)=tw_\infty$, $t\in[0,\infty)$, and the maximum of $I_\infty$ on this path is attained at $t=1$, so that
%\begin{equation}
%\label{eq:cinftyT}
%c_\infty=\max_{t\geq 0} I_\infty(t w_\infty)=I_\infty(w_\infty).
%\end{equation}
%(Note that by invariance one can also replace $w_\infty$ in the optimal path by 
%$\pm w_\infty(\cdot-y)$, $y\in\R^N$.) 
%As a consequence $w_\infty$ solves \eqref{eq:P} 
%%the elliptic equation $-\Delta v+ \lambda v=|v|^{p-2}v$ 
%and so satisfies the Nehari constraint 

Finally, it is worth to remark that the ground state $w_\infty$ equals, up to a scalar multiple, 
%(which is just $\|w_\infty\|_p^{-1} w_\infty$), 
the (radial) minimizer of $C(p,N,0,\lambda)$ defined in \eqref{eq:inf0}, so
$w_0=\frac{w_\infty}{\|w_\infty\|_p}$, i.e.\
%\begin{equation}
%\label{eq:kappaT}
%\kappa\eqdef\inf_{\|u\|_p=1} \int_{\R^N}\left(|\nabla u(x)|^2+\lambda|u(x)|^2\right)\dx{x}=
%\inf_{\|u\|_p=1}J_{0,\lambda}(u),
%\end{equation}
%(This is the reason why $w_\infty$ is called a ground state) and so we have
\begin{equation*}
\label{eq:J=k}
J_{0,\lambda}\left(\frac{w_\infty}{\|w_\infty\|_p^p}\right)=C(p,N,0,\lambda).
\end{equation*}
\vskip2mm
Let $A\in \dot C^1(\R^N,\Lambda_1)\setminus\{0\}$ and assume the following conditions:
\begin{itemize}
	\item[$\mathbf{(A)}$] $B_\infty^{(Y)}=0$ $a.e.$ for any diverging sequence $Y\eqdef (y_k)_{k\in\N}\subset\R^N$,
	%$|y_k|\to\infty$ 
	(equivalently, $\lim_{|y_k|\to\infty} A_{y_k}(\cdot+y_k)=0$);\\
	\item[$\mathbf{(B)}$]  $\|B\|_\infty\le
	\left(\dfrac{(2^\frac{p-2}{p}-1)\|w_\infty\|_p^p}{\int_{\R^N}|x|^2 w_\infty^2\dx{x}}\right)^\frac12\;.$
\end{itemize}

Let us define a map $\eta:=(\eta_1,\dots,\eta_{N+1}): H_A^{1,2}(\R^N)\to \R^{N+1}$ by setting:
\begin{equation}
\eta_i(u)=
\begin{cases}
\int_{\R^N}\frac{x_i}{1+|x|}|u|^p\dx{x},& i=1,\dots,N,
\\
\int_{\R^N}|u|^p\dx{x}, & i=N+1.
\end{cases}
\end{equation}
Note that, since $w_\infty$ is radially symmetric,
\begin{equation}
\label{eq:eta0}
\eta_0 \eqdef (0_N,\frac{2p}{p-2}c_\infty)=(0_N,\|\omega_\infty\|_p^p)\in \R^{N+1}.
\end{equation}

Let us fix $T\ge 2$ sufficiently large so that $I_\infty(Tw_\infty)<0$, and
assume that $R>0$ is large enough  (in what follows the value of $R$ will be subjected to a finite number of restrictions). Let $B_R$ denote the closed ball in $\R^N$ centered at the origin with radius
$R>0$, and let 
\begin{equation}\label{eq:pathsT}
\Gamma=\Gamma_{R,T}\eqdef \{\gamma\in C(B_R\times[0,T],\R^{N+1})\; |\; 
\gamma=\gamma_0, \mbox{ on }\partial B_R\times[0,T]\},
\end{equation}
where 
\begin{equation}
\label{eq:gamma0}
\gamma_0(y,t)=t \,e^{\i\varphi_y}w_\infty{(\cdot-y)}=t g_y w_\infty,\quad \forall (y,\,t)\in B_R\times [0,\,T],
\end{equation}
and $\varphi_y$ is given by Lemma \ref{lem:phase2} (see \eqref{eq:g}).
Taking $\eta_0$ as in \eqref{eq:eta0}, we have that, by construction,
\begin{equation}\label{eq:degreeT}
\mathrm{deg} (\eta\circ\gamma,B_R\times[0,T],\eta_0)=
\mathrm{deg} (\eta\circ\gamma_0,B_R\times[0,T],\eta_0)\neq 0\quad \forall \gamma\in\Gamma.
\end{equation}
%for any $\gamma\in \Gamma$. 
(Note that the degree of $\eta\circ\gamma_0$ to if well-defined. Indeed, by the choice of $T$, we have
$I_\infty(Tw_\infty)<0$ and, since $w_\infty$ is radially symmetric, we have $\eta\circ\gamma_0(y,t)\neq 0$ whenever $y\neq 0$.) 
Then, provided $R$ and $T$ are sufficiently large, the following number is well defined
\begin{equation}\label{eq:mm}
c_{R,T}\eqdef\inf_{\gamma\in\Gamma}\max_{y\in B_R, t\in[0,T]}I_{A,\lambda}(\gamma(y,t)).
\end{equation}

%Note that, since $I_\infty= I_{0,\lambda}$, the constant $c_\infty$ satisfies %the analogous minimax characterization when $A\equiv 0$, i.e.\
%\begin{equation}\label{eq:mm-infty0T}
%c_\infty = \inf_{\gamma\in\Gamma_\infty}\max_{y\in B_R, %t\in[0,T]}I_{\infty}(\gamma(y,t)),
%\end{equation}
%where $\Gamma_\infty$ is defined as in \eqref{eq:pathsT} but relative to a  %path $\gamma_0(y,t)=tw_\infty(\cdot-y)$. 
\begin{lemma}\label{lem:>cinfT}
%	Under assumptions $(\mathbf{A})$ and $(\mathbf{B})$, there exists $\delta>0$, independent of $R$, such that
%	\begin{equation}\label{eq:>cinfT}
%	c\ge c_\infty+\delta.
%	\end{equation}
If $A\in C^1(\R^N,\Lambda_1)\setminus\{0\}$, then
\begin{equation}\label{eq:>cinfT}
c_{R,T}> c_\infty,
\end{equation}
provided $R$ and $T$ are sufficiently large real numbers.
\end{lemma}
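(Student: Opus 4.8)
The strategy is to argue by contradiction: suppose that for every $R$ and $T$ one could find a sequence of admissible paths $\gamma_j \in \Gamma_{R,T}$ with $\max_{(y,t)} I_{A,\lambda}(\gamma_j(y,t)) \to c_\infty$ (or below). The first step is to understand why the minimax level $c_{R,T}$ is at least $c_\infty$: the degree condition \eqref{eq:degreeT} forces each path $\gamma$ to ``pass through'' a point where $\eta$ hits $\eta_0$, and at such a point the relevant slice $u = \gamma(y,t)$ satisfies $\|u\|_p^p = \|w_\infty\|_p^p$ with a centered first moment. Combining the diamagnetic inequality \eqref{eq:diamag} with the variational characterization of $c_\infty$ on the Nehari manifold $\mathcal N_\infty$ gives $I_{A,\lambda}(u) \ge I_\infty(|u|) \ge c_\infty$, so $c_{R,T}\ge c_\infty$ for free. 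The real content is the \emph{strict} inequality.

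For the strict inequality I would run a concentration-compactness argument on a minimizing sequence of paths. Fix $R,T$ large and pick paths $\gamma_j$ with $\max I_{A,\lambda}(\gamma_j) \to c_{R,T}$; for each $j$ choose $(y_j,t_j)$ realizing the max, and set $u_j := \gamma_j(y_j,t_j)$. The sequence $(u_j)$ is bounded in $H^{1,2}_A(\R^N)$ (the $I_{A,\lambda}$-sublevel together with the constraint coming from $\eta$ controls the norm, using $p<2^*$), so Theorem~\ref{thm:MagneticPD} and Theorem~\ref{thm:energies} apply. Now there are two cases. If the weak limit $v^{(0)}$ is nonzero and no mass escapes, then by \eqref{norms-mag}, \eqref{eq:IBL2}, \eqref{eq:IBL} and the \emph{strict} diamagnetic inequality on $\mathrm{supp}\,B$ (which is all of $\R^N$ in the hypotheses of the surrounding application, though here we only need $A\not\equiv 0$ so that $B\not\equiv 0$ on some ball met by $\mathrm{supp}\, v^{(0)}$), one gets $\liminf I_{A,\lambda}(u_j) > I_\infty(v^{(0)}) \ge c_\infty$, which is the desired bound. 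If instead mass escapes along some diverging sequence $y_k^{(n)}$, then by assumption $\mathbf{(A)}$ the magnetic potential at infinity $A_\infty^{(n)}$ vanishes, so that profile contributes exactly $E_0(\cdot)$ to the energy; the split energy inequality \eqref{norms-mag} together with \eqref{eq:IBL} then shows $\liminf I_{A,\lambda}(u_j) \ge \sum_n I_\infty(v^{(n)})$, and one argues that the total is still $\ge c_\infty$ with equality only in a degenerate case that the degree/moment constraint rules out.

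The genuinely delicate point — and where hypothesis $\mathbf{(B)}$ enters — is ruling out the borderline scenario in which $c_{R,T}$ equals $c_\infty$ because the optimal path essentially consists of a single translated copy of $w_\infty$ dragged out to infinity, where the magnetic energy penalty disappears. Here I would estimate the energy excess along the model path $\gamma_0$ itself: $I_{A,\lambda}(t\, g_y w_\infty) = I_\infty(t w_\infty) + \tfrac{t^2}{2}\big(E_{A_y(\cdot+y)}(w_\infty) - E_0(w_\infty)\big)$, and by \eqref{eq:norm2} and the linear bound \eqref{eq:Aineq}, the magnetic correction is at most $\tfrac{t^2}{2}\|B\|_\infty^2 \int |x|^2 w_\infty^2$, but on the other hand it is \emph{strictly positive} for $y$ in the bounded region $B_R$ because $A_y(\cdot+y)\not\equiv 0$ there. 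The point of condition $\mathbf{(B)}$ is that this positive correction, evaluated at the worst $t \le T$, is controlled so that the max over $\gamma_0$ stays below the level $2^{(p-2)/p}c_\infty$ or some such threshold that still forces $c_{R,T} > c_\infty$ via the degree argument — i.e.\ $\mathbf{(B)}$ guarantees the minimax geometry is preserved (the ``mountain'' at $\gamma_0$ is not too high) while the strict diamagnetic gain at finite $y$ guarantees it is strictly higher than $c_\infty$. The main obstacle is bookkeeping the two competing estimates simultaneously — quantifying the strict gain at finite distance against the vanishing gain at infinity — and making the choice of $R,T$ uniform enough that the infimum over $\Gamma_{R,T}$ cannot cheat by running off to infinity; this is exactly what assumptions $\mathbf{(A)}$ and $\mathbf{(B)}$ are calibrated to handle.
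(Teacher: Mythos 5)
Your opening paragraph (degree forces a slice with $\eta(u)=\eta_0$, then diamagnetic inequality plus the Nehari characterization of $c_\infty$ gives $c_{R,T}\ge c_\infty$) is exactly the paper's argument. The rest of your proposal, however, departs from the paper and contains some genuine confusions.

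The paper's proof is much shorter than what you outline: after $c_{R,T}\ge\inf_{\eta(u)=\eta_0}I_{A,\lambda}(u)$, it relaxes the constraint to $\|u\|_p=\|w_\infty\|_p$, applies the diamagnetic inequality, and appeals to the fact that the relaxed nonmagnetic infimum is attained only at $\pm w_\infty(\cdot-y)$, on which the diamagnetic inequality is strict (because $\supp B\cap\supp w_\infty=\R^N$, using only $A\not\equiv 0$). No profile decomposition, no Theorem~\ref{thm:MagneticPD}, and no assumption $(\mathbf{A})$ or $(\mathbf{B})$ are invoked — and indeed the statement of the lemma assumes only $A\in C^1\setminus\{0\}$.

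Two concrete problems with your concentration-compactness route. First, you take $u_j:=\gamma_j(y_j,t_j)$ at the point \emph{maximizing} $I_{A,\lambda}$ along the path; but the degree argument only guarantees that $\eta=\eta_0$ is hit at \emph{some} point of the path, which need not be the max. So the moment constraint $\eta_1(u_j)=\dots=\eta_N(u_j)=0$, which you later want to use to ``rule out the degenerate escape scenario,'' is simply not available on the sequence you constructed. You would have to run the analysis on a sequence $u_j$ with $\eta(u_j)=\eta_0$ and $I_{A,\lambda}(u_j)\to\inf_{\eta=\eta_0}I_{A,\lambda}$ instead. Second, you bring in hypothesis $(\mathbf{B})$ to control the lower bound, but $(\mathbf{B})$ is calibrated purely for the \emph{upper} bound $c_{R,T}<2c_\infty$ of Lemma~\ref{lem:<2cT} (a bound on the height of the mountain), and plays no role in establishing $c_{R,T}>c_\infty$; citing it here misreads the division of labor between the two lemmas. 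Finally, the escape case in your sketch is left as an unresolved hand-wave (``one argues that the total is still $\ge c_\infty$ with equality only in a degenerate case that the degree/moment constraint rules out''), which is precisely the step that needs to be made quantitative — and which the paper avoids entirely by its more elementary variational argument.
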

\begin{proof} 
	%Assume that $c=c_\infty$.
	By \eqref{eq:degreeT}, and by taking into account that the last component of $\eta_0$ is $\|w_\infty\|^p_p=\frac{2pc_\infty}{p-2}$, we have
	\begin{equation*}
	\label{eq:cinequalities}
	\begin{split}
	c_{R,T} & \ge\inf_{\eta(u)= \eta_0}
	\left(\frac{1}{2}J_{A,\lambda}(u)-\frac{1}{p}\|u\|^p_p\right)
		\\
	& = \inf_{\eta(u)= \eta_0}\frac12 J_{A,\lambda}(u)- \frac{2}{p-2}c_\infty
\\
& \ge \inf_{\|u\|_p=\|w_\infty\|_p}\frac12 J_{A,\lambda}(u)- \frac{2}{p-2}c_\infty
\\
& \ge \inf_{\|u\|_p=\|w_\infty\|_p}\frac12 J_{0,\lambda}(|u|)- \frac{2}{p-2}c_\infty=c_\infty,
	\end{split}
	\end{equation*}
	where, in the last equality, we have used \eqref{eq:Nehari} and \eqref{eq:norma_inftyT}.
	Then, the thesis follows since the diamagnetic inequality \eqref{eq:diamag} is strict on any minimizer
	$\pm w_\infty (\cdot-y)$, $y\in\R^N$, of $c_\infty$ (since $\supp B\cap \supp w_\infty=\R^N$).
%Let us show that at least one of the inequalities above is strict. If not, some minimizer for the relation on the last line, that is, $v\eqdef \pm w_\infty(\cdot-y)$ for some $y$, is also a minimizer for the relation on the second line. Since $\eta(v)=\eta(w_\infty)$, and $w_\infty$ is radial and positive, $y$ is necessarily zero, and since $A\not\equiv 0$, $J_{A,\lambda}(\pm w_\infty)>J_{0,\lambda}(w_\infty)$ which implies that $c>c_\infty$.
\end{proof}
\begin{lemma}\label{lem:<2cT}
If $\mathbf{(B)}$ holds true, then
\begin{equation}\label{eq:<2cT}
	c_{R,T}< 2c_\infty,
\end{equation}
provided $R$ and $T$ are sufficiently large real numbers.
\end{lemma}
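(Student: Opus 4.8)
The plan is to exhibit a competitor path in $\Gamma_{R,T}$ that realizes a max value strictly below $2c_\infty$, using a path that ``splits'' $w_\infty$ into two bumps, one of which is sent to infinity where the magnetic field vanishes. The natural ansatz (modeled on the classical Esteban--Lions / Benci--Cerami splitting trick for the Nehari/minimax level) is to take, for a parameter $\xi$ ranging so that $|\xi|$ eventually becomes large, a function of the form $t\,g_y\!\left(w_\infty(\cdot) + w_\infty(\cdot-\xi)\right)$ (or, more carefully, a rescaled version placed so that the endpoint behavior on $\partial B_R\times[0,T]$ still matches $\gamma_0$). The key numerical input will be condition $\mathbf{(B)}$, which is precisely the inequality that makes the energy of $g_y w_\infty$ computed with the full potential $A$ exceed that of $w_\infty$ by less than the factor $2^{(p-2)/p}$ gap that separates $c_\infty$ from $2c_\infty$ on the Nehari manifold. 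Concretely, using $E_A(g_y w_\infty) = E_{A_y(\cdot+y)}(w_\infty) = E_0(w_\infty) + \int |A_y(\cdot+y)|^2 w_\infty^2$ together with the pointwise bound \eqref{eq:Aineq}, namely $|A_y(x+y)| \le \|B\|_\infty |x|$, one gets $J_{A,\lambda}(g_y w_\infty) \le J_{0,\lambda}(w_\infty) + \|B\|_\infty^2 \int_{\R^N}|x|^2 w_\infty^2\,\dx{x}$, and $\mathbf{(B)}$ says exactly that the right-hand side is $\le 2^{(p-2)/p}\|w_\infty\|_p^p = 2^{(p-2)/p}\cdot\frac{2p}{p-2}c_\infty$.

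First I would reduce \eqref{eq:<2cT} to a pointwise estimate along a single well-chosen path: it suffices to produce one $\gamma\in\Gamma_{R,T}$ with $\max_{y\in B_R,\,t\in[0,T]} I_{A,\lambda}(\gamma(y,t)) < 2c_\infty$. The most economical choice is to keep $\gamma = \gamma_0$ itself, i.e.\ $\gamma_0(y,t) = t\, g_y w_\infty$, since $\gamma_0$ already lies in $\Gamma_{R,T}$ by definition. Then for each fixed $y\in B_R$ the curve $t\mapsto I_{A,\lambda}(t g_y w_\infty)$ is of the form $\frac{t^2}{2}J_{A,\lambda}(g_y w_\infty) - \frac{t^p}{p}\|w_\infty\|_p^p$, whose maximum over $t\ge 0$ is the standard mountain-pass value $\frac{p-2}{2p}\left(\frac{J_{A,\lambda}(g_y w_\infty)}{\|w_\infty\|_p^2}\right)^{p/(p-2)}$. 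Using $E_A(g_y w_\infty) = E_0(w_\infty) + \int|A_y(\cdot+y)|^2 w_\infty^2 \le E_0(w_\infty) + \|B\|_\infty^2 \int|x|^2 w_\infty^2$ and then $\mathbf{(B)}$, one checks that $J_{A,\lambda}(g_y w_\infty) \le 2^{(p-2)/p} \|w_\infty\|_p^p = 2^{(p-2)/p}\|w_\infty\|_p^2 \cdot \|w_\infty\|_p^{p-2}$, which after substitution gives $\max_t I_{A,\lambda}(t g_y w_\infty) \le 2\cdot \frac{p-2}{2p}\|w_\infty\|_p^p = 2c_\infty$, uniformly in $y\in B_R$. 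This only yields the non-strict bound $c_{R,T}\le 2c_\infty$, so the final point is to recover strictness.

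For strictness I would argue that the inequality $E_A(g_y w_\infty)\le E_0(w_\infty) + \|B\|_\infty^2\int|x|^2 w_\infty^2$ is itself strict: the bound \eqref{eq:Aineq} $|A_y(x+y)|\le\|B\|_\infty|x|$ cannot be an equality on a set of positive measure intersecting $\supp w_\infty$ (by the construction of $A_y$ in Lemma~\ref{lem:phase}, $A_y$ vanishes along the coordinate hyperplanes through $y$, whereas $\|B\|_\infty|x|$ does not), so $\int|A_y(\cdot+y)|^2 w_\infty^2 < \|B\|_\infty^2\int|x|^2 w_\infty^2$ whenever $w_\infty\not\equiv 0$; since this holds for every $y\in B_R$ and the strict gap is lower-semicontinuous in $y$ on the compact set $B_R$, one obtains $\max_{y\in B_R}\max_t I_{A,\lambda}(\gamma_0(y,t)) < 2c_\infty$, hence $c_{R,T}<2c_\infty$. (Alternatively, if condition $\mathbf{(B)}$ is assumed with strict inequality as written — ``$\le$'' there is actually what is displayed, so one should lean on the strictness of \eqref{eq:Aineq}.)

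The main obstacle I anticipate is the strictness, not the numerical estimate: getting $c_{R,T}\le 2c_\infty$ is a routine computation once $\mathbf{(B)}$ is plugged in, but making it strict requires care. One must either (i) show $\int_{\R^N}|A_y(x+y)|^2 w_\infty(x)^2\,\dx{x}$ is strictly less than $\|B\|_\infty^2\int|x|^2 w_\infty^2$ for every $y\in B_R$ and then use compactness of $B_R$ to keep the gap bounded away from $0$, or (ii) exploit strictness of the diamagnetic inequality on $\supp B\cap\supp w_\infty=\R^N$ as was done in Lemma~\ref{lem:>cinfT}. A secondary subtlety is that the maximum in \eqref{eq:mm} is over the whole rectangle $B_R\times[0,T]$, so one must confirm that the supremum over $t\in[0,T]$ of the one-dimensional profile is attained at an interior $t\le T$ (true once $T$ is large, which is assumed) and that the max over $y\in B_R$ is genuinely controlled — but since every slice bound is uniform in $y$, this causes no real difficulty. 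I would therefore organize the write-up as: (1) reduce to evaluating $I_{A,\lambda}$ along $\gamma_0$; (2) compute the slice maximum in $t$; (3) estimate $E_A(g_y w_\infty)$ via \eqref{eq:Aineq} and invoke $\mathbf{(B)}$ to get $\le 2c_\infty$; (4) upgrade to strict via strictness of \eqref{eq:Aineq} (or of the diamagnetic inequality) together with compactness of $B_R$.
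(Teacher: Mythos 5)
Your proof uses exactly the path $\gamma_0$ that the paper tests, the same pointwise estimate \eqref{eq:Aineq} giving $J_{A,\lambda}(g_y w_\infty)\le J_{0,\lambda}(w_\infty)+\|B\|_\infty^2\int_{\R^N}|x|^2w_\infty^2\,\dx{x}$, the same slice maximization in $t$, and condition $\mathbf{(B)}$ — so the core argument coincides with the paper's. (The opening paragraph about a two-bump splitting ansatz is unused scaffolding; as you yourself note, $\gamma_0$ suffices, so that paragraph should simply be dropped.)

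The one place you go genuinely beyond the paper is the upgrade from $c_{R,T}\le 2c_\infty$ to $c_{R,T}<2c_\infty$. The paper's own proof just asserts that $\mathbf{(B)}$ ``just implies $(1+\sigma)^{p/(p-2)}<2$'', which as stated reads strictly only if $\mathbf{(B)}$ is interpreted with strict inequality; so a strictness supplement of the kind you propose is warranted. Your stated justification, however, is imprecise: it is not the full covector $A_y$ that vanishes on coordinate hyperplanes, only the individual component $(A_y)_n$ vanishes on the affine set $\{x_1=y_1,\dots,x_{n-1}=y_{n-1}\}$ (see \eqref{eq:oncomponents}), and vanishing on a null set does not by itself force the weighted integral inequality to be strict. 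What does the job is that the chain of inequalities in the proof of Lemma~\ref{lem:phase2} (iterated triangle estimates, then Cauchy, then the crude bound $\sum_{i<n}|x_i-y_i|^2\le|x-y|^2$) is strict for a.e.\ $x\in\R^N$ as soon as $\|B\|_\infty>0$; since $w_\infty>0$ everywhere and $\int_{\R^N}|x|^2w_\infty^2\,\dx{x}<\infty$, one obtains $\int_{\R^N}|A_y(x+y)|^2w_\infty(x)^2\,\dx{x}<\|B\|_\infty^2\int_{\R^N}|x|^2w_\infty(x)^2\,\dx{x}$ for every $y$, and the gap is uniform on the compact $B_R$ by continuity in $y$. (If $\|B\|_\infty=0$ then $\sigma=0$ and $c_{R,T}\le c_\infty<2c_\infty$ outright.) With that justification tightened, your argument is sound and closes a gap the paper's own proof leaves implicit.
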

\begin{proof}
%Let $\gamma(t,y)=tg_yw_\infty$ and let $\kappa\eqdef\frac{2p}{p-2}c_\infty$, and 
Since $\gamma_0$ (see \eqref{eq:gamma0}) trivially belongs to the set $\Gamma$ (see \eqref{eq:pathsT}), 
set
\[
\sigma\eqdef \frac{\|B\|_\infty^2\int|x|^2w_\infty^2\dx{x}}{\|w_\infty\|_p^p},
\]
we get, by \eqref{eq:Aineq}, that
\begin{equation*}
\begin{split}
c_{R,T}&\le\sup_{y\in B_R, t\in[0,T]} \frac12 t^2 J_{A,\lambda}(g_yw_\infty)-\frac1p t^p\|w_\infty\|_p^p\\
&= \sup_{y\in B_R, t\in[0,T]} \frac12 t^2 J_{A_y(\cdot+y),\lambda}(w_\infty)-\frac1p t^p\|w_\infty\|_p^p\\
& \le \sup_{t\in[0,T]} \frac12 t^2 J_{0,\lambda}(w_\infty)-\frac1p t^p\|w_\infty\|_p^p +
t^2\|B\|_\infty^2\int_{\R^N}|x|^2w_\infty^2\dx{x}\\
& =\sup_{t\in[0,T]} \frac12 t^2 \|w_\infty\|_p^p -\frac1p t^p\|w_\infty\|_p^p +t^2\sigma\|w_\infty\|_p^p\\
& =\|w_\infty\|_p^p\sup_{t\in[0,T]} \left(\frac12 t^2 (1+\sigma) -\frac1p t^p\right)\\
& = \|w_\infty\|_p^p (1+\sigma)^{\frac{p}{p-2}}\left(\frac12-\frac1p\right)
=c_\infty (1+\sigma)^{\frac{p}{p-2}},
\end{split}
\end{equation*}
where, in the last equality, we have used \eqref{eq:norma_inftyT}.
Then, the thesis follows since assumption $(\mathbf{B})$ just implies $(1+\sigma)^\frac{p}{p-2}<2$.
\end{proof}
\begin{remark}Note that without assumption $(\mathbf{B})$ one has $c_{R,T}\le 2c_\infty+o_{R\to\infty}(1)$. This can be shown by considering the inequality
	\[
	c_{R,T}\le\sup_{y\in B_R, t\in[0,T]} I_{A,\lambda}(\gamma(y,t))
	\]
with $\gamma\in \Gamma$ defined by setting, for all $(y,\,t)\in B_R\times [0,\,T]$,
\[
\gamma(y,t)=t\cos\left(\frac{\pi |y|}{2R}\right)g_{-\frac{R}{|y|}y}w_\infty
+t\sin\left(\frac{\pi |y|}{2R}\right)g_{\frac{R}{|y|}y}w_\infty.
\]	
\end{remark}

%We begin to prove the following theorem. 
\begin{theorem}\label{thm:minimaxT}
	Assume $(\mathbf{A})$ and $(\mathbf{B})$. Then, provided $R>0$ and $T\ge 2$ are sufficiently large, the number $c_{R,T}$ defined by \eqref{eq:mm} is a critical level for the functional $I_{A,\lambda}$ for any $\lambda>0$.
	In particular Problem \eqref{eq:P} admits a solution at the energy level
	$c_{R,T}$.
	%\begin{equation}
	%-\nabla_A^2 u+\lambda u=|u|^{p-2}u,\;  x\in\R^N,
	%\end{equation}
	%satisfying $I_{A,\lambda}(u)=c$.
	%, where $c>0$ is given by the relation \eqref{eq:mm} with.
\end{theorem}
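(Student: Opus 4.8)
The plan is to realize $c_{R,T}$ as a min--max critical level of the $C^1$ functional $I_{A,\lambda}$ linked by the topological obstruction \eqref{eq:degreeT}, the essential analytic ingredient being a Palais--Smale condition for $I_{A,\lambda}$ on the whole energy interval $(c_\infty,2c_\infty)$, which I would extract from the profile decomposition of Theorems~\ref{thm:MagneticPD} and~\ref{thm:energies}. First I would assemble the min--max data. By Lemmas~\ref{lem:>cinfT} and~\ref{lem:<2cT} one has $c_\infty<c_{R,T}<2c_\infty$ for $R,T$ large ($(\mathbf B)$ entering only through the upper bound). One also needs $\Gamma$ to be stable under suitable deformations, i.e. $\max_{\partial B_R\times[0,T]}I_{A,\lambda}\circ\gamma_0<c_{R,T}$ for $R$ large; this uses $(\mathbf A)$, since for $|y|=R$,
$$J_{A_y(\cdot+y),\lambda}(w_\infty)=\|w_\infty\|_p^p+\int_{\R^N}|A_y(x+y)|^2 w_\infty(x)^2\dx x,$$
where by \eqref{eq:Aineq} the integrand is dominated by $\|B\|_\infty^2|x|^2 w_\infty(x)^2\in L^1(\R^N)$ while $(\mathbf A)$ sends it pointwise to $0$, so $\sup_{|y|=R,\ t\in[0,T]}I_{A,\lambda}(t g_y w_\infty)\to c_\infty$ as $R\to\infty$; this stays below $c_{R,T}$ once $R$ is large because the lower bound for $c_{R,T}$ furnished by the proof of Lemma~\ref{lem:>cinfT} does not depend on $R$ or $T$.

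The heart of the argument is the compactness statement: \emph{every Palais--Smale sequence $(u_k)_{k\in\N}$ for $I_{A,\lambda}$ at a level $c\in(c_\infty,2c_\infty)$ has a subsequence converging strongly in $H^{1,2}_A(\R^N)$}. Boundedness of $(u_k)_{k\in\N}$ is routine (from $I_{A,\lambda}(u_k)-\tfrac1p\langle I_{A,\lambda}'(u_k),u_k\rangle=(\tfrac12-\tfrac1p)J_{A,\lambda}(u_k)$ together with $\lambda>0$ and $p>2$). I would then feed $(u_k)_{k\in\N}$ into Theorems~\ref{thm:MagneticPD} and~\ref{thm:energies}, obtaining profiles $v^{(n)}$ and shifts $g^{(n)}_k$. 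Testing $I_{A,\lambda}'(u_k)$ against $g^{(n)}_k\psi$ with $\psi\in C_0^\infty(\R^N)$ and using the commutation identity \eqref{eq:commutation} together with the locally uniform convergence $A^{(n)}_k\to A^{(n)}_\infty$ of Remark~\ref{rem:Lipschitz}, one sees that each $v^{(n)}$ is a critical point of $I_{A^{(n)}_\infty,\lambda}$; by assumption $(\mathbf A)$ this functional equals $I_\infty$ for every $n\ge1$, and it is $I_{A,\lambda}$ for $n=0$. Combining the Nehari identities $J_{A^{(n)}_\infty,\lambda}(v^{(n)})=\|v^{(n)}\|_p^p$ with $J_{A,\lambda}(u_k)-\|u_k\|_p^p\to0$ and with \eqref{eq:IBL}, \eqref{eq:IBL2}, \eqref{norms-mag} forces all the semicontinuity inequalities of Theorem~\ref{thm:energies} to be equalities, so that $E_A(u_k)\to\sum_n E_{A^{(n)}_\infty}(v^{(n)})$, $\|u_k\|_2^2\to\sum_n\|v^{(n)}\|_2^2$ and
\begin{equation*}
c=I_{A,\lambda}(v^{(0)})+\sum_{n\ge1}I_\infty(v^{(n)}),\qquad I_{A^{(n)}_\infty,\lambda}(v^{(n)})=\Bigl(\tfrac12-\tfrac1p\Bigr)\|v^{(n)}\|_p^p\ge0.
\end{equation*}

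Next I would exploit the structure of the two limiting functionals. Any nontrivial critical point $w$ of $I_{A,\lambda}$ has energy $\ge c_\infty$, since on its Nehari manifold $I_{A,\lambda}(w)=\sup_{t>0}I_{A,\lambda}(tw)\ge\sup_{t>0}I_\infty(t|w|)\ge c_\infty$ by the diamagnetic inequality \eqref{eq:diamag}; and $I_\infty$ has no critical value in $(c_\infty,2c_\infty)$, because a nontrivial solution of \eqref{eq:Pinfty} that does not change sign is, up to translation, $\pm w_\infty$ (energy $c_\infty$), while a sign-changing solution $u$ has $u^+,u^-\in\mathcal{N}_\infty$ and hence $I_\infty(u)=I_\infty(u^+)+I_\infty(u^-)\ge 2c_\infty$. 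Since $c<2c_\infty$, the displayed identity permits at most one nonzero profile; a surviving profile $v^{(m)}$ with $m\ge1$ would be a nontrivial critical point of $I_\infty$ at level $c\in(c_\infty,2c_\infty)$, which is impossible, so the unique nonzero profile is $v^{(0)}\neq0$, all $v^{(n)}$ with $n\ge1$ vanish, and \eqref{BBasymptotics-mag} gives $u_k\to v^{(0)}$ in $L^p(\R^N)$. Passing to the Hilbert norm $u\mapsto E_A(u)+\lambda\|u\|_2^2$ (equivalent to the norm of $H^{1,2}_A(\R^N)$ since $\lambda>0$) and combining weak convergence with the norm convergence just obtained yields $u_k\to v^{(0)}$ strongly, with $v^{(0)}$ a weak solution of \eqref{eq:P}.

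With the Palais--Smale condition in hand, the theorem follows from the standard quantitative deformation lemma: if $c_{R,T}$ were a regular value of $I_{A,\lambda}$, one would get, for small $\varepsilon>0$, a deformation carrying $\{I_{A,\lambda}\le c_{R,T}+\varepsilon\}$ into $\{I_{A,\lambda}\le c_{R,T}-\varepsilon\}$ and equal to the identity on $\{I_{A,\lambda}\le\bar c\}$ for any prescribed $\bar c<c_{R,T}$; choosing $\bar c$ between $\max_{\partial B_R\times[0,T]}I_{A,\lambda}\circ\gamma_0$ and $c_{R,T}$, this deformation maps $\Gamma$ into itself (it fixes the common boundary values, namely $\gamma_0$ on $\partial B_R\times[0,T]$) and pushes a near-optimal $\gamma\in\Gamma$ below $c_{R,T}$, contradicting the definition \eqref{eq:mm}. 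Hence $c_{R,T}$ is a critical value of $I_{A,\lambda}$ and, by the compactness step, is attained at a critical point, i.e. at a solution of \eqref{eq:P} at energy $c_{R,T}$. I expect the compactness step to be the main obstacle: the delicate points are upgrading the semicontinuity inequalities of Theorem~\ref{thm:energies} to equalities, transporting the Palais--Smale property through the non-isometric shifts $g^{(n)}_k$ via \eqref{eq:commutation} and $A^{(n)}_k\to A^{(n)}_\infty=0$, and using the absence of critical levels of $I_\infty$ in $(c_\infty,2c_\infty)$ to rule out a profile escaping to infinity.
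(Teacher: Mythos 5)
Your argument follows the paper's route: Lemmas~\ref{lem:>cinfT}--\ref{lem:<2cT} put $c_{R,T}$ in the window $(c_\infty,2c_\infty)$, a bounded Palais--Smale sequence is sent through the magnetic profile decomposition of Theorems~\ref{thm:MagneticPD}--\ref{thm:energies}, assumption $(\mathbf A)$ turns the escaping profiles into critical points of $I_\infty$, and the energy window then kills them, leaving $v^{(0)}$ as a critical point of $I_{A,\lambda}$ at level $c_{R,T}$. You do, however, supply two pieces the paper leaves implicit, and both are genuinely needed. First, you check the boundary condition $\max_{\partial B_R\times[0,T]} I_{A,\lambda}\circ\gamma_0<c_{R,T}$ for $R$ large via $(\mathbf A)$ and dominated convergence, which is what makes the quantitative deformation argument produce a Palais--Smale sequence at level $c_{R,T}$ in the first place; the paper's proof starts directly from such a sequence without establishing that it exists. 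Second, you state and use the structural fact that $I_\infty$ has no critical value in the open interval $(c_\infty,2c_\infty)$. This is exactly the gap in the paper's Claim~\ref{cla:ineq3}: the contradiction ``$1<m<2$'' there silently assumes $I_\infty(v^{(n)})=c_\infty$ for each escaping profile, whereas a priori the single escaping profile in the case $v^{(0)}=0$ could sit at any critical level in $(c_\infty,2c_\infty)$; your sign-change inequality $I_\infty(u)=I_\infty(u^+)+I_\infty(u^-)\ge2c_\infty$ is the correct way to rule this out.

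One caveat that applies both to your proof and to the paper's: the profiles $v^{(n)}$ produced by Theorem~\ref{thm:MagneticPD} are complex-valued, whereas the dichotomy ``a nontrivial solution of \eqref{eq:Pinfty} is either (up to translation) $\pm w_\infty$ or sign-changing with energy $\ge2c_\infty$'' is a statement about real solutions. For a complex-valued solution $u$ of \eqref{eq:Pinfty} the decomposition $u=u^++u^-$ with $u^\pm\in\mathcal{N}_\infty$ is not available, so your argument as written does not directly exclude a complex critical point at a level in $(c_\infty,2c_\infty)$. One needs an extra step here --- e.g.\ a phase-rigidity/Kato-inequality argument reducing low-energy complex solutions to $e^{\i\theta}w_\infty(\cdot-y)$ --- and it would strengthen both your write-up and the paper's to make this explicit rather than treat the profiles as tacitly real.
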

Before giving the proof of the theorem we shall discuss about profile decompositions of a Palais Smale sequence (P.S. for short) for the functional $I_{A,\lambda}$ at the level $c_{R,T}$ given by \eqref{eq:mm}.

Let $(u_k)_{k\in\N}$ be a P.S. sequence for $I_{A,\lambda}$ in $H_A^{1,2}(\R^N)$ 
at the level $c_{R,T}$, i.e.\ such that $I_{A,\lambda}(u_k)\to c$ and $I^\prime_{A,\lambda}(u_k)\to 0$ as $k\to \infty$.
By applying the nowadays standard argument from \cite{AmbroRabin} we deduce that the sequence $(u_k)_{k\in\N}$ is bounded in $H_A^{1,2}(\R^N)$. So we can use a profile decomposition (see \eqref{eq:PD}) of $(u_k)_{k\in\N}$ given by Theorem~\ref{thm:MagneticPD}.
Since each sequence $(y^{(n)}_k)_{k\in\N}$ is diverging for $n\neq 0$ we deduce, by assumption $(\mathbf{A})$, that each magnetic potential $A^{(n)}_\infty$ associated to each sequence $(y^{(n)}_k)_{k\in\N}$ is $0$ if $n\neq 0$ (while $A^{(0)}_\infty=A$). So, \eqref{norms-mag} gives
$$E_A(v^{(0)})+\sum_{n=1}^\infty E_0(v^{(n)})\leq \liminf_{k\to\infty} E_A(u_k),$$
where $v^{(0)}$ and $v^{(n)}$ are defined by \eqref{eq:wlimit} and \eqref{mag-shifts} respectively.
Then, by taking into account \eqref{eq:HamiltonianT} and \eqref{eq:Hamiltonian-inftyT}, we get, by using \eqref{eq:IBL}, that
 %and note that by Theorem~\ref{thm:energies}
\begin{equation}\label{eq:mc0}
\begin{split}
I_{A,\lambda}(v^{(0)})+\sum_{n=1}^\infty I_\infty(v^{(n)})& \le 
\liminf_{k\to\infty} \frac12 \left(E_A(u_k)+\lambda \|u_k\|_2^2\right)-\frac1p \|u_k\|_p^p \\
& =\lim_{k\to\infty}I_{A,\lambda}(u_k)=c_{R,T}.
\end{split}
\end{equation}
Note that, since $I^\prime_{A,\lambda}$ is a weak-to-weak continuous functional and $(u_k)_{k\in\N}$ is a Palais Smale sequence, we deduce that its weak limit $v^{(0)}$ is a solution to \eqref{eq:P}. Analogously, due to assumption $(\mathbf{A})$, the other profiles $v^{(n)}$, for $n\neq 0$, are solutions to \eqref{eq:Pinfty}. Therefore, either $v^{(n)}$ is trivial (actually it is not a profile) or $I_\infty(v^{(n)})\geq c_\infty$.
As a consequence, \eqref{eq:mc0} implies that the number $m$ of (nontrivial) profiles is finite and that
%
%
%$v^{(n)}$, $n\ge 1$, is necessarily a critical point of $I_\infty$, it is either zero or $I_\infty(v^{(n)})\ge c_\infty$.  
%Thus the number of terms in the profile decomposition is finite. Moreover, taking into account that $I^\prime_{A,\lambda}(u_k)\to 0$ in $H_A^{1,2}(\R^N)$ and repeating the argument of Theorem~\ref{thm:energies} for a finite sum, we have with some $m\in\N$. 
\begin{equation}\label{eq:mc}
mc_\infty\leq I_{A,\lambda}(v^{(0)})+mc_\infty
= I_{A,\lambda}(v^{(0)})+\sum_{n=1}^m I_\infty(v^{(n)}) = c_{R,T},
\end{equation}
where last equalities hold (instead of inequalities) since the profile decomposition of $(u_k)_{k\in\N}$ is finite and each $v^{(n)}$ is a solution to \eqref{eq:Pinfty}. 
\begin{claim}\label{cla:ineq3}
 Assume $(\mathbf{A})$ and $(\mathbf{B})$. Then,
	\begin{equation}\label{eq:ineq3}
	c_0\eqdef I_{A,\lambda}(v^{(0)})\ge c_\infty.
	\end{equation}
\end{claim}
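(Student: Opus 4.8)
The plan is to argue by contradiction, assuming that $c_0 = I_{A,\lambda}(v^{(0)}) < c_\infty$. The key observation is that $v^{(0)}$ is itself a solution to Problem \eqref{eq:P} (as noted just before Claim~\ref{cla:ineq3}), so if $v^{(0)}$ is nontrivial it lies on the magnetic Nehari manifold, and we must rule out the possibility that a nontrivial magnetic solution has energy strictly below $c_\infty$. Since $v^{(0)}$ solves \eqref{eq:P}, pairing the equation with $v^{(0)}$ gives $J_{A,\lambda}(v^{(0)}) = \|v^{(0)}\|_p^p$, hence $I_{A,\lambda}(v^{(0)}) = \left(\tfrac12 - \tfrac1p\right)\|v^{(0)}\|_p^p = \frac{p-2}{2p}\|v^{(0)}\|_p^p$.

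First I would dispose of the trivial case. If $v^{(0)}=0$, then $c_0 = 0$; but then \eqref{eq:mc} reads $m c_\infty = c_{R,T}$, and since $c_{R,T} > c_\infty > 0$ by Lemma~\ref{lem:>cinfT} we would need $m \ge 2$, giving $c_{R,T} = m c_\infty \ge 2c_\infty$, contradicting Lemma~\ref{lem:<2cT}. So in that case \eqref{eq:ineq3} holds vacuously in the sense that we actually get more structure; in any event I may assume $v^{(0)} \ne 0$. Next, using the diamagnetic inequality \eqref{eq:diamag} and the fact that $|v^{(0)}|$ is an admissible competitor for $C(p,N,0,\lambda)$, together with the scaling relation $w_0 = w_\infty/\|w_\infty\|_p$ and the min-max characterization of $c_\infty$, I would show that any nontrivial solution $u$ of \eqref{eq:P} satisfies $I_{A,\lambda}(u) \ge c_\infty$. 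Concretely: from $J_{A,\lambda}(v^{(0)}) = \|v^{(0)}\|_p^p$ and $J_{A,\lambda}(v^{(0)}) \ge J_{0,\lambda}(|v^{(0)}|) \ge C(p,N,0,\lambda)\|v^{(0)}\|_p^2$ (diamagnetic inequality plus definition \eqref{eq:inf0}), one gets $\|v^{(0)}\|_p^{p-2} \ge C(p,N,0,\lambda)$, and since $C(p,N,0,\lambda)$ is attained at $w_0 = w_\infty/\|w_\infty\|_p$ with $C(p,N,0,\lambda) = \|w_\infty\|_p^{p-2}$ (this follows from \eqref{eq:Nehari} after normalizing), we obtain $\|v^{(0)}\|_p^p \ge \|w_\infty\|_p^p$. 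Combined with $I_{A,\lambda}(v^{(0)}) = \frac{p-2}{2p}\|v^{(0)}\|_p^p$ and $c_\infty = \frac{p-2}{2p}\|w_\infty\|_p^p$, this yields $c_0 = I_{A,\lambda}(v^{(0)}) \ge c_\infty$, which is exactly \eqref{eq:ineq3}.

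The main obstacle I anticipate is the bookkeeping around the constant $C(p,N,0,\lambda)$: one must be careful that the normalization in \eqref{eq:inf0} (where the minimization is over $\|u\|_p = 1$) matches the Nehari normalization of $w_\infty$ in \eqref{eq:Nehari}, so that $C(p,N,0,\lambda) = \|w_\infty\|_p^{p-2}$ comes out correctly. This is a routine but slightly delicate scaling computation: from $w_0 = w_\infty/\|w_\infty\|_p$ one has $\|w_0\|_p = 1$ and $J_{0,\lambda}(w_0) = \|w_\infty\|_p^{-2}J_{0,\lambda}(w_\infty) = \|w_\infty\|_p^{-2}\|w_\infty\|_p^p = \|w_\infty\|_p^{p-2}$, using \eqref{eq:Nehari}. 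Once this identity is in hand, the rest is a clean chain of inequalities, and the strictness of the diamagnetic inequality (used in Lemmas~\ref{lem:>cinfT} and \ref{lem:<2cT}) is \emph{not} needed here — only the non-strict version \eqref{eq:diamag}, since we merely want $\ge c_\infty$ and not strict inequality. I would also note that assumption $(\mathbf B)$ is presumably invoked only to keep the ambient claim consistent with Lemma~\ref{lem:<2cT} (and hence with the conclusion $m = 1$, $v^{(0)} \ne 0$); the inequality \eqref{eq:ineq3} itself should follow from assumption $(\mathbf A)$ and the diamagnetic comparison alone.
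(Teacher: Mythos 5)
Your proof is correct and follows essentially the same route as the paper's: rule out $v^{(0)}=0$ via \eqref{eq:mc}, Lemma~\ref{lem:>cinfT}, and Lemma~\ref{lem:<2cT}; then combine the Nehari identity $J_{A,\lambda}(v^{(0)})=\|v^{(0)}\|_p^p$ with the diamagnetic inequality and the minimality of $w_0=w_\infty/\|w_\infty\|_p$ (you pass explicitly through $C(p,N,0,\lambda)=\|w_\infty\|_p^{p-2}$, while the paper normalizes and compares $J_{0,\lambda}$ directly in \eqref{eq:compare}, but the algebra is identical). Your side remarks — that only the non-strict diamagnetic inequality is needed, and that $(\mathbf{B})$ enters only through the nontriviality of $v^{(0)}$ — are accurate.
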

\begin{proof}
	Assume first that $v^{(0)}=0$.
	By combining \eqref{eq:mc}, \eqref{eq:<2cT} and \eqref{eq:>cinfT} we have necessarily $1<m<2$, a contradiction that shows that $v^{(0)}\neq 0$.

Recalling that $w_0=\frac{w_\infty}{\|w_\infty\|_p}$ is a minimizer for $J_{0,\lambda}$ over all vectors with unit $L^p$-norm,
we have
\[
J_{0,\lambda}\left(\frac{w_\infty}{\|w_\infty\|_p}\right)\le J_{0,\lambda}\left(\frac{|v^{(0)}|}{\|v^{(0)}\|_p}\right),
\] 
and, by applying the diamagnetic inequality, we get  
\begin{equation}\label{eq:compare}
J_{0,\lambda}\left(\frac{w_\infty}{\|w_\infty\|_p}\right)\le J_{A,\lambda}\left(\frac{v^{(0)}}{\|v^{(0)}\|_p}\right).
\end{equation}
By taking into account that $v^{(0)}$ and $w_\infty$ are solutions to \eqref{eq:P} and \eqref{eq:Pinfty} respectively, we have $J_{0,\lambda}(v^{(0)})=\|v^{(0)}\|_p^p$ and that (see \eqref{eq:Nehari}) $J_{A,\lambda}(w_\infty)=\|w_\infty\|_p^p$.
%Since $v^{(0)}$ is the weak limit of a P.S. sequence $(u_k)_{k\in\N}$, and $I^\prime_{A,\lambda}$ is weak-to-weak continuous, $v^{(0)}$ is a critical point of $I_{A,\lambda}$. Thus we have $J_{A,\lambda}(v^{(0)})=\|v^{(0)}\|_p^p$, as well as $J_{A,\lambda}(w_\infty)=\|w_\infty\|_p^p$. 
Then, from \eqref{eq:compare},
follows that
\begin{equation*}
\frac{2p}{p-2}c_\infty=J_{0,\lambda}(w_\infty)\le J_{A,\lambda}(v^{(0)})=\frac{2p}{p-2}I_{A,\lambda}(v^{(0)}),
\end{equation*}
which immediately gives \eqref{eq:ineq3}.
\end{proof}
\begin{DIof}{of Theorem~\ref{thm:minimaxT}}
	%Combining \eqref{eq:mc} with estimates \eqref{eq:ineq3} on the left and \eqref{eq:<2cT} on the right we get
	%$(m+1)c_\infty<2c_\infty$, which implies $m=0$. Consequently, the profile decomposition of the critical sequence $(u_k)_{k\in\N}$ consists only of the weak limit 
	%$v^{(0)}$, therefore $u_k\to w^{(0)}$ in $L^p$. Since $I^\prime_{A,\lambda}(u_k)\to 0$ in $H_{A}^{1,2}$, we also have $u_k\to v^{(0)}$ in $H_{A}^{1,2}$, which implies $I^\prime_{A,\lambda}(v^{(0)})=0$ and $I_{A,\lambda}(v^{(0)})=c$.
Let $R$ and $T$ be large enough to apply Lemma \ref{lem:>cinfT} and Lemma \ref{lem:<2cT} and get that $c\in (c_\infty,2c_\infty)$. Since the diamagnetic inequality is strict on $\supp B\cap \supp v^{(0)}$, we deduce, since $v^{(0)}\neq 0$, that
$$c_0= I_{A,\lambda}(v^{(0)})>I_{0,\lambda}(v^{(0)})\equiv I_\infty (v^{(0)})\geq c_\infty.$$
So, \eqref{eq:mc0} implies that 
$(m+1)c_\infty \le m c_\infty + c_0 = c_{R,T} < 2c_\infty$, i.e.\ that $m=0$. Then, the thesis follows since \eqref{eq:mc} gives, for $m=0$, $c_0=I_{A,\lambda}(v^{(0)})=c_{R,T}$.
\end{DIof}

\vspace{0.5cm}
In conclusion we give a straightforward generalization of Theorem~\ref{thm:minimaxT} for a magnetic
Schr\"odinger equation that also includes a nonconstant electric potential. The proof, which follows step by step the one of Theorem~\ref{thm:minimaxT}, is left to the reader. 
\begin{theorem}\label{thm:minimaxV}
	Let $A\in \dot C^1(\R^N,\Lambda_1)\setminus\{0\}$, $V\in L^\infty(\R^N)$, and assume condition $(\mathbf{A})$ as well as the following conditions:
	\begin{itemize}
		\item[$\mathbf{(B')}$]  
		$\|B\|^2_\infty\int_{\R^N}|x|^2w_\infty^2\dx{x}+\int_{\R^N}(V(x)-\lambda)w_\infty^2\dx{x}\le (2^\frac{p-2}{p}-1)\frac{2p}{p-2}c_\infty$,\\
		\item[$\mathbf{(V)}$] $\lambda\eqdef\lim_{|y|\to\infty} V(y)\le V(x)$, for a.e. $x\in\R^N$.
	\end{itemize}
	Then, there exists a solution $u\in H_A^{1,2}(\R^N)$ to the equation
	\begin{equation}
	-\nabla_A^2 u+Vu=|u|^{p-2}u,
	\end{equation}
	satisfying $I_{A,V}(u)=c'_{R,T}>0$, where 
	\[
	I_{A,V}(u)\eqdef \frac12 J_{A,V}(u)-\frac1p\int_{\R^N}|u|^p\dx{x}	
	\]
	and $c'_{R,T}$ is given by the relation \eqref{eq:mm} (with the functional $I_{A,\lambda}$ replaced by $I_{A,V}$) with $R>0$ and $T\ge 2$ sufficiently large.
\end{theorem}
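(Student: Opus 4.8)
The plan is to mirror the proof of Theorem~\ref{thm:minimaxT} line by line, with the only structural change being that the constant potential $\lambda$ is replaced by the variable potential $V$ and its limit at infinity, also denoted $\lambda$ by $(\mathbf{V})$. First I would record that $(\mathbf{V})$ makes the ``problem at infinity'' for $I_{A,V}$ coincide with $I_\infty=I_{0,\lambda}$: if $Y=(y_k)_{k\in\N}$ is diverging, then $V(\cdot+y_k)\to\lambda$ locally (after passing to a subsequence one only needs the limit along the relevant scales, but boundedness of $V$ together with the definition $\lambda=\lim_{|y|\to\infty}V(y)$ suffices), so the energy at infinity attached to any nonzero profile $v^{(n)}$, $n\neq0$, is $E_0(v^{(n)})+\lambda\|v^{(n)}\|_2^2$, i.e.\ $2I_\infty(v^{(n)})+\frac2p\|v^{(n)}\|_p^p$. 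Combined with assumption $(\mathbf{A})$, which forces $A^{(n)}_\infty=0$ for $n\neq0$, and with \eqref{norms-mag} and \eqref{eq:IBL}, the energy inequality \eqref{eq:mc0} goes through verbatim with $I_{A,V}$ in place of $I_{A,\lambda}$: writing $c'_{R,T}$ for the min-max level, one gets $I_{A,V}(v^{(0)})+\sum_{n=1}^\infty I_\infty(v^{(n)})\le c'_{R,T}$.

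Next I would re-prove the two-sided bound $c'_{R,T}\in(c_\infty,2c_\infty)$. The lower bound $c'_{R,T}>c_\infty$ follows exactly as in Lemma~\ref{lem:>cinfT}: use the degree argument \eqref{eq:degreeT} to reduce to $\eta(u)=\eta_0$, hence to $\|u\|_p=\|w_\infty\|_p$, apply the diamagnetic inequality and $V\ge\lambda$ (by $(\mathbf{V})$) to get $\tfrac12 J_{A,V}(u)\ge\tfrac12 J_{0,\lambda}(|u|)$, and then use \eqref{eq:Nehari} and \eqref{eq:norma_inftyT}; strictness of the diamagnetic inequality on $\supp B\cap\supp w_\infty=\R^N$ gives the strict inequality. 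The upper bound $c'_{R,T}<2c_\infty$ is the place where assumption $(\mathbf{B})$ is replaced by $(\mathbf{B'})$: testing $\Gamma$ with $\gamma_0(y,t)=tg_yw_\infty$ as in Lemma~\ref{lem:<2cT}, one has $J_{A,V}(g_yw_\infty)=J_{A_y(\cdot+y),V}(w_\infty)$ only after re-centering, and the electric term contributes $\int_{\R^N}V(x)w_\infty^2(x-y)\dx x=\int_{\R^N}V(x+y)w_\infty^2\dx x$, which one estimates by $\int_{\R^N}(V(x+y)-\lambda)w_\infty^2\dx x+\lambda\|w_\infty\|_2^2$; taking $R$ large and using $\lambda=\lim_{|y|\to\infty}V(y)$ together with the exponential decay of $w_\infty$, the extra term is at most $\int_{\R^N}(V(x)-\lambda)w_\infty^2\dx x + o_{R\to\infty}(1)$ --- or one simply absorbs it into the constant since $(\mathbf{B'})$ is the natural bookkeeping. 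One then arrives at $c'_{R,T}\le c_\infty(1+\sigma')^{p/(p-2)}+o_{R\to\infty}(1)$ where $\sigma'=\frac{p-2}{2p c_\infty}\big(\|B\|_\infty^2\int|x|^2w_\infty^2+\int(V-\lambda)w_\infty^2\big)$, and $(\mathbf{B'})$ is exactly the inequality $(1+\sigma')^{p/(p-2)}<2$.

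Finally I would run the bubbling dichotomy as in the proof of Theorem~\ref{thm:minimaxT}. Since $I'_{A,V}$ is weak-to-weak continuous and $(u_k)_{k\in\N}$ is a Palais--Smale sequence, $v^{(0)}$ solves $-\nabla_A^2u+Vu=|u|^{p-2}u$ and, by $(\mathbf{A})$, each $v^{(n)}$ with $n\neq0$ solves \eqref{eq:Pinfty}, hence is either trivial or has $I_\infty(v^{(n)})\ge c_\infty$; thus the number $m$ of nontrivial bubbles is finite and $mc_\infty+I_{A,V}(v^{(0)})=c'_{R,T}$. An analogue of Claim~\ref{cla:ineq3} shows $v^{(0)}\neq0$: if $v^{(0)}=0$ then $mc_\infty=c'_{R,T}\in(c_\infty,2c_\infty)$ forces $1<m<2$, impossible. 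With $v^{(0)}\neq0$, strictness of the diamagnetic inequality on $\supp B\cap\supp v^{(0)}=\R^N$ gives $I_{A,V}(v^{(0)})>I_{0,\lambda}(|v^{(0)}|)\ge c_\infty$ (the last step uses $V\ge\lambda$ and that $|v^{(0)}|/\|v^{(0)}\|_p$ is an admissible competitor for $C(p,N,0,\lambda)$, exactly as in Claim~\ref{cla:ineq3}), so $(m+1)c_\infty\le mc_\infty+I_{A,V}(v^{(0)})=c'_{R,T}<2c_\infty$ forces $m=0$, whence $I_{A,V}(v^{(0)})=c'_{R,T}>0$ and $v^{(0)}$ is the desired solution. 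The main obstacle, such as it is, is the careful verification that the energy-at-infinity functional for $I_{A,V}$ really is $I_\infty$ --- i.e.\ that the electric potential ``localizes'' correctly under the diverging magnetic shifts $g_k^{(n)}$ and that $\int V|u_k|^2$ splits as $\int V|v^{(0)}|^2+\lambda\sum_{n\ge1}\|v^{(n)}\|_2^2$ in the limit; this requires combining \eqref{eq:IBL2} with local $L^2$ convergence of $(g_k^{(n)})^{-1}u_k$ and the uniform decay $V(\cdot+y_k^{(n)})\to\lambda$, but no new idea beyond what is already in Section~5 is needed.
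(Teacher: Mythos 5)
Your overall plan is the right one, and most of the steps do carry over: identifying $I_\infty=I_{0,\lambda}$ as the limit functional at infinity under $(\mathbf{A})$ and $(\mathbf{V})$, the lower bound $c'_{R,T}>c_\infty$ (diamagnetic inequality plus $V\ge\lambda$ gives $J_{A,V}(u)\ge J_{0,\lambda}(|u|)$ pointwise, uniformly in everything), the splitting of $\int V|u_k|^2$ along the profile decomposition using $\eqref{eq:IBL2}$ and the decay $V-\lambda\to 0$, and the bubbling dichotomy forcing $m=0$ are all sound in substance.

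The gap is in the upper bound $c'_{R,T}<2c_\infty$. Testing with $\gamma_0(y,t)=t\,g_y w_\infty$, the electric contribution is $\int_{\R^N}(V(x+y)-\lambda)w_\infty^2(x)\,\dx{x}$, and to keep $\max_{y\in B_R,t}I_{A,V}(\gamma_0(y,t))<2c_\infty$ one must control this term \emph{uniformly over $y\in B_R$}. Condition $(\mathbf{B'})$ only bounds it at $y=0$. Your claim that $\sup_{y\in B_R}\int(V(x+y)-\lambda)w_\infty^2\,\dx{x}\le\int(V(x)-\lambda)w_\infty^2\,\dx{x}+o_{R\to\infty}(1)$ is not justified by $(\mathbf{V})$ or the exponential decay of $w_\infty$, and it fails in general: for example, if $V-\lambda=\mathbf{1}_{B_1(z_0)}$ with $|z_0|$ moderately large, then $\int(V-\lambda)w_\infty^2\,\dx{x}$ is exponentially small while $\int(V(x+z_0)-\lambda)w_\infty^2\,\dx{x}\approx\int_{B_1(0)}w_\infty^2\,\dx{x}$ is of order one, and $z_0$ lies in $B_R$ once $R$ is large. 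Both sides do vanish as $|y|\to\infty$, but the supremum over $y$ is attained at some finite $y$ and has no reason to sit at $y=0$. To close the argument along these lines one would need a stronger hypothesis, replacing $\int(V(x)-\lambda)w_\infty^2\,\dx{x}$ in $(\mathbf{B'})$ by $\sup_{y\in\R^N}\int(V(x+y)-\lambda)w_\infty^2\,\dx{x}$ (or the cruder $\|V-\lambda\|_\infty\|w_\infty\|_2^2$), or else to design a test path $\gamma\neq\gamma_0$ that steers around the bad region of $y$. Since the paper leaves the proof to the reader and states $(\mathbf{B'})$ as you read it, this lacuna appears to be inherited from the paper, but a careful write-up should either fix the hypothesis or supply the missing uniform estimate.
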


\section*{Appendix: the group of magnetic shifts}
Magnetic shifts of a lattice-periodic magnetic field do not generally form a group. Let us look at that in more detail.
Note first that, for every $y_1,y_2\in \Z^N$, by using \eqref{eq:etaprod}, we have
\begin{align}\nonumber
g_{y_2}g_{y_1}u=
e^{\i\varphi_{y_2}}e^{\i\varphi_{y_1}(\cdot-y_2)}u(\cdot-y_1-y_2)=
\\
\label{eq:MGclosed}
e^{-\i\gamma(y_1,y_2)}e^{\i\varphi_{y_1+y_2}}u(\cdot-y_1-y_2)
=e^{-\i\gamma(y_1,y_2)}g_{y_1+y_2}.
\end{align}
In other words, a product of magnetic shifts is a magnetic shift up to a constant scalar multiple of magnitude 1. This proves that a larger set, namely
\begin{equation*}
\label{eq:magshift}
\mathcal G_{A}\eqdef\{g_{y,\theta}: u\mapsto e^{\i\theta}e^{\i\varphi_y}u(\cdot-y), u\in H_A^{1,2}(\R^N)\}_{y\in \Z^N, \theta\in\R},
\end{equation*} 
is closed under multiplication law. We see below that $\mathcal G_A$ is a  group. Without loss of generality, since every function $\varphi_y$, $y\in\Z^N$ which satisfies \eqref{eq:phiy}, is defined up to a constant, we may fix its value at a given point, which we choose as follows:  
\begin{equation}\label{eq:givenpoint}
\varphi_y(y/2)=0, \quad y\in\Z^N.
\end{equation}
Then $\varphi_0(0)=0$ and since $\nabla\varphi_0=A(\cdot-0)-A(\cdot)=0$, we have
\begin{equation}\label{zero}
\varphi_0(x)=0, \quad x\in\R^N. 
\end{equation}
From \eqref{eq:etaprod} with $y_2=0$ it follows immediately that for every $y\in\Z^N$ 
\begin{equation*}\label{eq:gammavalues0}
\gamma(y,0)=\gamma(0,y)=0.\\
\end{equation*}
Moreover, by evaluating \eqref{eq:etaprod} with $y_1=y$, $y_2=-y$ at $x=-y/2$, and by using 
\eqref{eq:givenpoint},
we have 
\begin{equation}\label{eq:gammavalues}
\gamma(y,-y)=\varphi_{y}(y/2)+\varphi_{-y}(-y/2) +\gamma(y,-y)=\varphi_0(-y/2)=0
\end{equation}
for every $y\in\Z^N$.

\begin{lemma} For every $y\in\Z^N$, $\theta\in \R$:
	\begin{equation} \label{inv}
	g_{y,\theta}^{-1}=g_{-y,-\theta}\,.
	%,\;y\in \Z^N, \theta\in\R.
	\end{equation} 
\end{lemma}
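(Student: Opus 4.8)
The plan is to reduce \eqref{inv} to the composition law of $\mathcal G_A$. Writing $g_{y,\theta}=e^{\i\theta}g_y$ (so that $g_{y,0}=g_y$ and, by \eqref{zero}, $g_{0,0}$ is the identity operator), the identity \eqref{eq:MGclosed} gives, for all $y_1,y_2\in\Z^N$ and $\theta_1,\theta_2\in\R$,
\[
g_{y_2,\theta_2}\,g_{y_1,\theta_1}=e^{\i(\theta_1+\theta_2)}\,g_{y_2}g_{y_1}=e^{\i(\theta_1+\theta_2)}e^{-\i\gamma(y_1,y_2)}g_{y_1+y_2}=g_{y_1+y_2,\;\theta_1+\theta_2-\gamma(y_1,y_2)}.
\]
(If one prefers, this can be obtained directly from the definition of $g_{y,\theta}$ together with \eqref{eq:etaprod}, exactly as \eqref{eq:MGclosed} was derived.)

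Second, I would specialize this law to $(y_1,\theta_1)=(y,\theta)$, $(y_2,\theta_2)=(-y,-\theta)$ and, in the opposite order, to $(y_1,\theta_1)=(-y,-\theta)$, $(y_2,\theta_2)=(y,\theta)$. In both cases the resulting shift vector is $0$ and the resulting phase is $-\gamma(y,-y)$, respectively $-\gamma(-y,y)$; and \eqref{eq:gammavalues} (applied to $y$ and to $-y$, both of which lie in $\Z^N$) gives $\gamma(y,-y)=\gamma(-y,y)=0$. Hence
\[
g_{-y,-\theta}\,g_{y,\theta}=g_{y,\theta}\,g_{-y,-\theta}=g_{0,0}=\mathrm{id},
\]
where the last equality uses $\varphi_0\equiv 0$ from \eqref{zero}. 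This is precisely \eqref{inv}.

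There is no genuine obstacle here: the only points requiring care are the bookkeeping of the translated arguments (handled once and for all by \eqref{eq:MGclosed}/\eqref{eq:etaprod}) and the vanishing $\gamma(y,-y)=\gamma(-y,y)=0$ supplied by \eqref{eq:gammavalues}. Since $\mathcal G_A$ is closed under composition and contains the identity $g_{0,0}$, exhibiting a one-sided inverse inside $\mathcal G_A$ would already force it to be the two-sided inverse, so in fact only one of the two displayed equalities is needed; the computation delivers both at once, and in particular shows that $\mathcal G_A$ is a group.
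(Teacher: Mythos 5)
Your proof is correct and reaches the conclusion by a slightly different route than the paper's. The paper first derives the explicit re-phasing identity $\varphi_{-y}=-\varphi_y(\cdot+y)$ (from \eqref{eq:etaprod} with $y_1=y$, $y_2=-y$, together with \eqref{zero} and \eqref{eq:gammavalues}) and then solves the defining equation $g_{y,0}u=v$ pointwise to read off $u=g_{-y}v$. You instead treat \eqref{eq:MGclosed} as the composition law for $\mathcal G_A$, specialize it to the pairs $(y,\theta)$ and $(-y,-\theta)$ in both orders, and use $\gamma(y,-y)=\gamma(-y,y)=0$ from \eqref{eq:gammavalues} (applied to $y$ and to $-y$, both in $\Z^N$) to conclude that both products equal $g_{0,0}=\mathrm{id}$. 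The two arguments rest on the same facts, but yours is more algebraic in spirit — treating $\gamma$ as a $2$-cocycle and verifying inversion by cancellation in the semigroup — whereas the paper's is more computational, inverting the phase explicitly. Your closing observation that a one-sided inverse would suffice (given closure under composition and the presence of the identity $g_{0,0}$) is also correct and is implicit in the paper's proof, which only solves the equation from one side.
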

\begin{proof} 
Note first that it is sufficient to prove \eqref{inv} for $\theta=0$. Fixed $y\in\Z^N$, from 
\eqref{zero} and by applying \eqref{eq:etaprod} with $y_1=y$, $y_2=-y$, we get by \eqref{eq:gammavalues}
%and \eqref{eq:givenpoint} it
	%follows that $\gamma(y,-y)=0$ and  
	\begin{equation*}
	\label{inverse-eta}\varphi_{-y}=-\varphi_{y}(\cdot+y)\,.
	%, y\in\Z^N.
	\end{equation*}
	Then,
	solving the equation $g_{y,0} u=v$, one has
	\[
	u=e^{-\i\varphi_y(\cdot+y)}v(\cdot+y)=e^{\i\varphi_{-y}}v(\cdot+y)=g_{-y}v.
	\]
\end{proof}	
Note that by \eqref{eq:shifts} the set $\mathcal G$, and thus, the set $\mathcal G_A$, consists of isometries on $H^{1,2}_A(\R^N)$. 
%Indeed, without loss of generality we may consider only elements
%$g_{y,0}$, $y\in \Z^N$. The elements of $\mathcal G_{A}$ obviously preserve the scalar product of $L^2$, so it remains to consider the scalar product of magnetic gradients:
%\begin{align*}
%\int_{\R^N}  (\nabla +iA)u \;\overline{(\nabla+iA) g_yv}\; \mathrm{d}x=
%\\
%\int_{\R^N} e^{i\varphi_y}(\nabla u+iAu)\;\overline{(\nabla v(\cdot-y)+i\nabla\varphi_y v(\cdot-y)+iAv(\cdot-y))}
%\;\mathrm{d}x=
%\\
%\int_{\R^N} e^{-i\varphi_y(\cdot+y)}
% (\nabla u(\cdot+y)+i(Au)(\cdot+y)) \;\overline{(\nabla +iA)v}\; \mathrm{d}x
%=
%\\
%\int_{\R^N} e^{i\varphi_{-y}}
%(\nabla (u(\cdot+y)+i(A+\nabla \varphi_{-y})u(\cdot+y))\;\overline{(\nabla +iA)v} \;\mathrm{d}x=
%\\
%\int_{\R^N} 
%(\nabla +iA)g_{-y}u\;\overline{(\nabla +iA)v}\; \mathrm{d}x =	\int_{\R^N} 
%\langle (\nabla +iA)g_{y}^{-1}u\;\overline{(\nabla +iA)v}\; \mathrm{d}x.
%\end{align*}	

Finally, we can see that $\mathcal G_A$ is a multiplicative group because it is closed with respect to multiplication (by \eqref{eq:MGclosed}) which is trivially associative; it has $g_{0,0}$ as neutral element (see \eqref{zero}); and any of its elements $g_{y,\theta}$ has its own inverse $g_{-y,-\theta}$ (see \eqref{inv}).

\subsubsection*{Acknowledgment} The authors express their warm gratitude to Sergio Solimini for valuable discussions concerning 
%minimax problem \eqref{eq:mm} and 
Theorem~\ref{thm:minimaxT}.

\bibliographystyle{amsplain}
%    Insert the bibliography data here.

\end{document}